\documentclass[12pt,a4paper]{article}
\usepackage{fontspec}
\usepackage[margin=27mm,top=25mm,bottom=25mm,headheight=15pt,headsep=8mm]{geometry}
\usepackage{amsmath,amssymb,amsthm,mathtools}
\usepackage{enumitem,fancyhdr,needspace,titlesec}
\usepackage{float}
\usepackage{booktabs}
\usepackage{microtype}
\usepackage{tikz}
\usetikzlibrary{arrows.meta,decorations.pathreplacing}
\colorlet{paperlightgray}{black!8}
\colorlet{papermediumgray}{black!35}
\tikzset{
  paper vertex/.style={circle,draw=black,fill=black,inner sep=0pt,minimum size=3.5pt},
  paper root/.style={circle,draw=black,fill=white,inner sep=0pt,minimum size=6pt,line width=.7pt},
  paper marked/.style={rectangle,draw=black,fill=black,inner sep=0pt,minimum size=4pt},
  paper outline/.style={draw=black!55,dashed,line width=.5pt},
  paper region fill/.style={fill=paperlightgray,draw=none},
  paper region/.style={paper outline,fill=paperlightgray},
  paper edge/.style={draw=black,line width=.6pt},
  paper brace/.style={draw=black,line width=.4pt,decorate,decoration={brace}}
}
\newcommand{\paperunderbrace}[1]{%
  \mathop{%
    \begin{tikzpicture}[baseline=(expression.base)]
      \node[inner sep=0pt,outer sep=0pt] (expression) {$\displaystyle #1$};
      \draw[paper brace,decoration={brace,mirror,amplitude=4pt}]
        ([yshift=-2pt]expression.south west)--([yshift=-2pt]expression.south east);
    \end{tikzpicture}%
  }\limits
}
\tikzset{
  paper B region/.style={fill=papermediumgray,draw=none},
  paper vertex group/.style={paper outline},
  paper component frame/.style={paper outline,solid,rounded corners=5pt},
  paper near-left y/.initial=1.22,
  paper near-right y/.initial=1.05,
  pics/paper root branch/.style args={#1/#2}{code={
    \path[paper region fill] (0,1.05) ellipse (.62cm and .58cm);
    \path[paper region fill] (0,-.65) ellipse (.77cm and .56cm);
    \ifnum#1=2\relax
      \path[paper B region] (0,1.05) ellipse (.62cm and .58cm);
    \fi
    \ifnum#1=1\relax
      \begin{scope}
        \clip (0,1.05) ellipse (.62cm and .58cm);
        \path[paper B region]
          (-.75,.35)--(-.75,1.75)--(.08,1.75)
          .. controls (-.2,1.35) and (.28,.85) .. (.02,.35)--cycle;
      \end{scope}
    \fi
    \ifnum#2=1\relax
      \begin{scope}
        \clip (0,-.65) ellipse (.77cm and .56cm);
        \path[paper B region]
          (-.85,-1.3)--(-.85,0)--(-.08,0)
          .. controls (-.25,-.39) and (.51,-.79) .. (.34,-1.3)--cycle;
      \end{scope}
    \fi
    \draw[paper vertex group] (0,1.05) ellipse (.62cm and .58cm);
    \draw[paper vertex group] (0,-.65) ellipse (.77cm and .56cm);
    \coordinate (-near-left) at (-.23,\pgfkeysvalueof{/tikz/paper near-left y});
    \coordinate (-near-right) at (.25,\pgfkeysvalueof{/tikz/paper near-right y});
    \coordinate (-far-left) at (-.24,-.54);
    \coordinate (-far-right) at (.30,-.75);
    \draw[paper edge] (-near-left)--(-far-left);
    \draw[paper edge] (-near-right)--(-far-right);
    \foreach \vertexname in {near-left,near-right,far-left,far-right}
      \node[paper vertex] at (-\vertexname) {};
  }},
  pics/paper other component/.style={code={
    \path[paper region fill] (0,0) ellipse (.75cm and .58cm);
    \begin{scope}
      \clip (0,0) ellipse (.75cm and .58cm);
      \path[paper B region]
        (-.85,-.65)--(.85,-.65)--(.85,-.13)
        .. controls (.3,.15) and (-.2,-.36) .. (-.85,-.06)--cycle;
    \end{scope}
    \draw[paper vertex group] (0,0) ellipse (.75cm and .58cm);
    \node at (0,.13) {$\cdots$};
  }},
  pics/paper lower components/.style={code={
    \pic at (-2.9,-2.5) {paper other component};
    \pic at (-.5,-2.5) {paper other component};
    \node at (-2.9,-3.3) {$C_2$};
    \node at (-.5,-3.3) {$C_3$};
    \node at (1.4,-2.5) {$\cdots$};
  }},
  pics/paper B legend/.style args={#1}{code={
    \path[paper B region] (2.2,-2.65) rectangle (2.65,-2.35);
    \node[anchor=west] at (2.8,-2.5) {vertices in $#1$};
  }}
}
\usepackage[hidelinks,unicode,pdfencoding=auto]{hyperref}
\usepackage{xurl}
\hypersetup{
  pdftitle={A proof of Vince's conjecture on the average order of connected induced subgraphs},
  pdfauthor={Jungang Chen and Hongxia Ma},
  pdfsubject={The minimum degree three conjecture, graphs with cut vertices, fixed minimum degree, and joins},
  pdfkeywords={average order, connected induced subgraph, extremal graph theory}
}
\titleformat{\section}{\large\bfseries}{\thesection}{1em}{}
\titleformat{\subsection}{\normalsize\bfseries}{\thesubsection}{1em}{}
\titlespacing*{\section}{0pt}{2.2ex plus .4ex minus .2ex}{1.2ex plus .2ex}
\titlespacing*{\subsection}{0pt}{1.8ex plus .3ex minus .2ex}{.8ex plus .2ex}
\newtheoremstyle{cnplain}{7pt}{7pt}{\normalfont}{0pt}{\bfseries}{.}{.6em}{}
\theoremstyle{cnplain}
\newtheorem{theorem}{Theorem}[section]
\newtheorem{lemma}[theorem]{Lemma}
\newtheorem{proposition}[theorem]{Proposition}
\newtheorem{corollary}[theorem]{Corollary}
\newtheorem*{conjecture}{Conjecture}

\numberwithin{equation}{section}
\setlist[enumerate]{label=\textnormal{(\roman*)},leftmargin=2.5em,itemsep=2pt,topsep=4pt}
\allowdisplaybreaks[1]
\newcommand{\CC}{\mathcal C}
\newcommand{\QQ}{\mathcal Q}
\newcommand{\DD}{\mathcal D}
\newcommand{\calA}{\mathcal A}
\newcommand{\doi}[1]{\href{https://doi.org/#1}{\nolinkurl{#1}}}
\newcommand{\ord}[1]{v(#1)}

\title{A proof of Vince's conjecture on the average order of connected induced subgraphs}
\author{%
  \begin{tabular}[t]{@{}c@{\hspace{3em}}c@{}}
    Jungang Chen & Hongxia Ma\thanks{Corresponding author.}\\[5pt]
    {\small School of Mathematical Sciences} & {\small Xinjiang Normal University}\\
    {\small Xiamen University, P. R. China} & {\small P. R. China}\\[4pt]
    {\small\href{mailto:jgchen@stu.xmu.edu.cn}{\texttt{jgchen@stu.xmu.edu.cn}}} &
    {\small\href{mailto:hongxiama516@163.com}{\texttt{hongxiama516@163.com}}}
  \end{tabular}%
}
\date{}
\makeatletter
\renewcommand{\@maketitle}{%
  \newpage\null\vskip .5em
  \begin{center}
    {\LARGE\bfseries\@title\par}%
    \ifx\@author\@empty\else
      \vskip 1em
      {\normalsize\lineskip .5em
       \begin{tabular}[t]{c}\@author\end{tabular}\par}%
    \fi
    \ifx\@date\@empty\else
      \vskip .5em {\normalsize\@date\par}%
    \fi
  \end{center}
  \vskip .75em
}
\makeatother

\begin{document}
\maketitle
\begin{abstract}
We prove that the average order of connected induced subgraphs of a connected simple graph of order $n$ and minimum degree at least $3$ is strictly greater than $n/2$, which settles a conjecture of Vince~[\emph{J. Graph Theory} 97 (2021), 82--103].
We go further and show that, for connected simple graphs of order $n$ and minimum degree exactly $k\ge3$, $K_k\vee(K_1\cup K_{n-k-1})$ uniquely minimizes this average under additional conditions.
\end{abstract}
\noindent\textbf{Keywords:} average order; connected induced subgraph; extremal graph theory
\par\smallskip
\noindent\textbf{2020 Mathematics Subject Classification:} Primary 05C30; Secondary 05C07, 05C35, 05C40.

\section{Introduction}\label{sec:intro}

Connectedness is a fundamental topic in graph theory, and connected induced subgraphs play an important role in the study of graph structure.
We study the average order of connected induced subgraphs, which was introduced for general graphs by Kroeker, Mol, and Oellermann~\cite{KroekerMolOellermann2018}.
For a tree, this is the average order of its subtrees, first studied by Jamison~\cite{Jamison1983}.
For a general graph, several subtrees may have the same vertex set, so the two types of averages need not agree.
A natural extremal question is how small this average can be among connected graphs of a given order.

All graphs in this paper are finite and simple.
For a graph $G$ of order $n$, write $V(G)$ and $E(G)$ for its vertex and edge sets, and $v(G)$ for its order.
For $X\subseteq V(G)$, let $G[X]$ be the subgraph induced by $X$ and let $G-X$ be the graph obtained by deleting $X$.
A nonempty vertex set is called connected if it induces a connected subgraph. Define
\begin{equation*}
 \CC(G)=\{\varnothing\ne U\subseteq V(G):G[U]\text{ is connected}\}.
\end{equation*}
Write $N(G)=|\CC(G)|$ and $S(G)=\sum_{U\in\CC(G)}|U|$.
For a nonempty graph, the average order and density of its connected induced subgraphs are
$A(G)=S(G)/N(G)$ and $D(G)=A(G)/v(G)$, respectively.
We write $\Gamma_G(v)$ for the neighborhood of $v$, $d_G(v)=|\Gamma_G(v)|$ for its degree, and $\delta(G)$ for the minimum degree.

Haslegrave~\cite[Theorem 1]{HaslegravePath2022} and Vince~\cite[Theorem 1.1]{VinceLower2022} independently proved that
$A(G)\ge(n+2)/3$ for every connected graph of order $n$, with equality if and only if the graph is a path.
The path has vertices of small degree, and it is natural to ask whether excluding such vertices forces a larger average.
Vince~\cite[Conjecture 7]{VinceProblems2021} conjectured that every connected graph of order $n$ with minimum degree at least $3$ satisfies $A(G)\ge n/2$.
He restated this conjecture in~\cite[Conjecture 1]{VinceUIS2025}. Later, several partial results support this conjecture: Haslegrave~\cite[Theorem 3.7]{HaslegraveDegree2022} proved that $D(G)\to1/2$ along any sequence of connected graphs whose minimum degree tends to infinity.
Vince~\cite[Theorem 2]{VinceConnectivity2021} proved that every $t$-connected graph of order $n$ satisfies
$A(G)\ge\frac n2\left(1-\frac1{2^t+1}\right)$, which gives $A(G)\ge2n/5$ for 2-connected graphs.
More recently, Vince~\cite[Theorem 1.4(2)]{VinceUIS2025} proved the conjecture for connected vertex-transitive graphs by using union-intersection systems. However, these results do not settle the conjecture in full.

Haslegrave~\cite[Section 3]{HaslegravePath2022} explained an obstacle to induction for the conjecture: deleting a vertex can destroy the minimum degree condition. We overcome this difficulty by extending Haslegrave's work on $(S,H)$-connected sets to obtain stronger rooted counting estimates and constructing smaller graphs with the required minimum degree.
Define
\begin{equation}\label{eq:F}
 F(G)=2S(G)-v(G)N(G)
     =\sum_{U\in\CC(G)}(2|U|-v(G)).
\end{equation}
Here we set $N,S,F$ to zero for the empty graph.
Since $A(G)=n/2+F(G)/(2N(G))$, the conjecture is equivalent to $F(G)\ge0$.
We prove the stronger bound $F(G)\ge n$, which is sharp because complete graphs satisfy
\begin{equation}\label{eq:complete}
 N(K_n)=2^n-1,\qquad S(K_n)=n2^{n-1},\qquad F(K_n)=n.
\end{equation}

Following~\cite{BondyMurty2008}, a graph is 2-connected if it is connected, has at least three vertices, and has no cut vertex. We use connected sets with connected complements to find an edge whose contraction preserves 2-connectivity and allows induction on the order (Section~\ref{sec:biconnected}). For graphs with minimum degree at least three and a cut vertex, we strengthen Haslegrave's estimates by requiring degree at least three only away from a root and its neighbors.
We split at a cut vertex or cut edge and add to each rooted part a vertex adjacent to the root and all its neighbors.
The resulting graphs have smaller order and minimum degree at least three, allowing the strengthened estimates to complete the induction (Section~\ref{sec:mainproof}).
For the equality cases, let $G_{\mathrm c}$ and $G_{\mathrm b}$ be obtained from two disjoint copies of $K_4$ by identifying one vertex from each copy or adding an edge between the copies, respectively.
For minimum degree exactly $k\ge3$, write $\cup$ for disjoint union and $\vee$ for the join obtained by adding all edges between the parts.
Put $m=n-k-1\ge0$ and define
\begin{equation*}
 H_{n,k}=K_k\vee(K_1\cup K_m),\qquad \mu_{n,k}=\frac n2+
  \frac{2+(k-1)2^m}{2(2^n-2^m)}.
\end{equation*}
Here $K_0$ is the empty graph and $\mu_{n,k}=A(H_{n,k})$ by Lemma~\ref{lem:candidate-count}.
Following the edge-addition comparison for cographs of Kroeker, Mol, and Oellermann~\cite{KroekerMolOellermann2018}, we preserve the neighborhood of a vertex of minimum degree and add all missing edges among the other vertices.
Estimating the average order of the newly connected induced subgraphs, together with the cut-vertex bounds, gives the fixed-degree results (Section~\ref{sec:fixeddegree}).

Our main results are as follows.

\begin{theorem}\label{thm:biconnected}
Every 2-connected simple graph $G$ of order $n$ satisfies $F(G)\ge n$.
\end{theorem}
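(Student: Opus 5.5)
We argue by induction on $n$, contracting an edge whose contraction keeps the graph 2-connected. For the base case, and for the family where no such edge exists, we compute directly. For the cycle $C_n$, the connected sets are the $n$ sets of each size $1\le s\le n-1$ (the arcs of length $s$), together with $V$. This gives
\[
F(C_n)=\sum_{s=1}^{n-1}n(2s-n)+n=n .
\]
In particular $F(K_3)=3$, and every cycle is covered. If $G$ is 2-connected and not a cycle, we use the classical fact that $G$ has an edge $e=uv$ with $G/e$ 2-connected. The paper's hint suggests getting this via connected sets with connected complements: pick a connected $X\ni u,v$ such that $G-X$ is connected, extremal in a suitable sense. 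The induction hypothesis then gives $F(G/e)\ge n-1$.

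Next we compare $F(G)$ with $F(G/e)$, writing $w$ for the contracted vertex. Split $\CC(G)$ into three classes:
\begin{itemize}[leftmargin=2.5em]
\item $\calA_0$, the sets avoiding $u$ and $v$;
\item $\calA_2$, the sets containing both $u$ and $v$;
\item $\calA_1$, the sets containing exactly one of $u,v$.
\end{itemize}
The map $U\mapsto U$ on $\calA_0$, together with $U\mapsto (U\setminus\{u,v\})\cup\{w\}$ on $\calA_2$, is a bijection onto $\CC(G/e)$. Under it, each term $2|U|-v(G)$ drops by $1$ for $U\in\calA_0$ and rises by $1$ for $U\in\calA_2$. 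Hence
\[
F(G)=F(G/e)-|\calA_0|+|\calA_2|+\sum_{U\in\calA_1}(2|U|-n),
\]
and it suffices to prove
\[
\sum_{U\in\calA_1}(2|U|-n)\ \ge\ |\calA_0|-|\calA_2|+1 .
\]

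To handle this inequality we use complementation $U\mapsto V\setminus U$. It maps $\calA_0$ into sets containing both $u,v$, and it swaps "contains $u$ only" with "contains $v$ only". A pair in $\calA_1$ consisting of $U$ and a connected complement contributes $(2|U|-n)+(2(n-|U|)-n)=0$. Likewise, each $U\in\calA_0$ with connected complement is matched by an element of $\calA_2$. So the inequality reduces to a statement about the unmatched sets, namely those with disconnected complement. Its right side is at most $\#\{U\in\calA_0: G-U\text{ disconnected}\}-\#\{U\in\calA_2: G-U\text{ disconnected}\}+1$, and its left side is a sum over sets in $\calA_1$ with disconnected complement.

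For these we use 2-connectivity. Given $U\in\calA_0$ with $G-U$ disconnected, the component of $G-U$ containing $u,v$ (which are adjacent) can be absorbed together with the other components to produce large connected sets. These sets lie in $\calA_2$, or in $\calA_1$ with size exceeding $n/2$, and so yield an injection-with-weights. I expect this charging step to be the main obstacle: the map must be injective and must gain the extra $+1$. The $+1$ should come from the set $V\setminus\{v\}$ (or $V\setminus\{u\}$). It lies in $\calA_1$, is connected by 2-connectivity, has weight $n-2$, and its complement $\{v\}$ is a connected set in $\calA_1$ of weight $2-n$ that it cancels with. So the gain must instead be extracted from the sets $\{u\}$ versus $V\setminus\{u\}$ and the choice of $e$. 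If the direct charging proves too delicate, I would choose $e$ using the extremal connected set $X$ with connected complement described above, so that the disconnected-complement sets have controlled structure.
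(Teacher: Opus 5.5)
\textbf{There is a genuine gap: the inequality that carries the induction step is never proved.} Your setup matches the paper's. The identity $F(G)=F(G/e)+|\calA_2|-|\calA_0|+\sum_{U\in\calA_1}(2|U|-n)$ is exactly Lemma~\ref{lem:contraction}, and your computation $F(C_n)=n$ is correct. But you never establish $\sum_{U\in\calA_1}(2|U|-n)\ge|\calA_0|-|\calA_2|+1$.

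The complementation step only rewrites this inequality, since pairs with connected complements cancel. The ``injection-with-weights'' from $\calA_0$-sets with disconnected complement is never defined, and its injectivity is not checked. The negative-weight members of $\calA_1$ with disconnected complement are not accounted for. As you note yourself, the source of the $+1$ is not found.

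Working with an arbitrary contractible edge also makes this harder than it needs to be, because the comparison of $\calA_2$ with $\calA_0$ can go badly wrong. Take $G$ to be $K_{n-2}$ together with a path $x\,u\,v\,y$, where $x\ne y$ lie in the clique. Then $G-\{u,v\}$ is connected, but
$|\calA_2|-|\calA_0|=(1+2^{n-2}-2^{n-4})-(2^{n-2}-1)=2-2^{n-4}$, which is negative for $n\ge6$. So for an unspecified contractible edge, the whole burden, including the $+1$, falls on a quantitative surplus in $\calA_1$ that your sketch does not produce.

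Two ingredients are missing.

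\emph{First ingredient.} $\calA_1$ splits into the connected sets of $G-v$ through $u$ and those of $G-u$ through $v$. Hence $\sum_{U\in\calA_1}(2|U|-n)=F(G-v,u)+F(G-u,v)$, where $F(H,r)=2S(H,r)-(\ord{H}+1)N(H,r)$. Each term is nonnegative by Vince's rooted bound (Corollary~\ref{cor:roothalf}), since $G-u$ and $G-v$ are connected.

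\emph{Second ingredient.} The edge must be chosen so that $N(G,\{u,v\})\ge\overline N(G,\{u,v\})+1$, that is, $|\calA_2|\ge|\calA_0|+1$. This is where the $+1$ comes from. The paper obtains such an edge in Lemma~\ref{lem:descent}:
\begin{itemize}
\item Take $U$ of minimum size $\ge2$ such that $G[U]$ and $G-U$ are connected and $N(G,U)\ge\overline N(G,U)+1$. The set $V(G)\setminus\{x\}$ qualifies, since $2=1+1$.
\item If $|U|>2$, Lemma~\ref{lem:twoboundary} gives two vertices $v_1,v_2\in U$ adjacent to $V(G)\setminus U$ whose removal keeps $G[U]$ connected.
\item For $i\ne j$, the maps $U_0\mapsto U_0\cup(U\setminus\{v_j\})$ inject the connected sets meeting $U$ exactly in $\{v_i\}$ into those meeting $U$ exactly in $U\setminus\{v_j\}$.
\item These injections show that some $U\setminus\{v_i\}$ still satisfies the inequality, contradicting minimality.
\end{itemize}
Your ``extremal connected set with connected complement'' points in this direction. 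What is missing is naming $N(G,X)-\overline N(G,X)\ge1$ as the quantity to preserve and proving that it survives the shrinking.

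Incidentally, cycles are not exceptional: $C_n/e=C_{n-1}$ is 2-connected. The minimal $U$ above also already gives $G-\{u,v\}$ connected, so no separate ``classical fact'' about contractible edges is needed.
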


\begin{theorem}\label{thm:main}
Let $G$ be a connected simple graph of order $n$ with $\delta(G)\ge3$. Then
\begin{equation*}
 F(G)\ge n,
\end{equation*}
with equality if and only if $G\cong K_n$. If $G$ has a cut vertex, then
\begin{equation*}
 F(G)\ge2n,
\end{equation*}
with equality if and only if $G\cong G_{\mathrm c}$ or $G\cong G_{\mathrm b}$.
\end{theorem}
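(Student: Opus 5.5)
The proof splits according to whether $G$ is 2-connected. Since $\delta(G)\ge3$ forces $n\ge4$, the only alternative to 2-connectivity is a cut vertex.

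\textbf{The 2-connected case.} Theorem~\ref{thm:biconnected} already gives $F(G)\ge n$, so only the equality statement needs work. I would go back into its contraction induction. That proof finds an edge $uv$ whose contraction $G/uv$ is still 2-connected, and then compares $F(G)$ with $F(G/uv)$ plus a correction term. The correction counts connected sets containing exactly one of $u,v$, together with sets that are connected in $G/uv$ but not in $G$.

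The point to prove is that this correction is strictly positive, giving strict inequality, whenever some pair $x,y$ is nonadjacent. For this I would choose the contracted edge next to a nonadjacent pair, or use a connected set $U$ with connected complement that misses an edge. The surplus would then be counted via the sets separating $x$ from $y$. For $K_n$, equality follows from \eqref{eq:complete}.

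\textbf{The cut-vertex case: the rooted quantity.} Here I would prove the stronger bound $F(G)\ge 2n>n$ by strong induction on $n$, using the rooted quantities indicated in the introduction. For a graph $H$ with root $r$, the estimates are for the sums of $2|U|-v(H)$ over connected sets $U\ni r$ and over connected sets $U\not\ni r$. They are needed only under the weaker hypothesis that every vertex outside $\{r\}\cup\Gamma(r)$ has degree at least $3$.

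\textbf{The cut-vertex case: splitting.} Suppose $c$ is a cut vertex, with $G=G_1\cup G_2$ and $V(G_1)\cap V(G_2)=\{c\}$. Then
\[
N(G)=N(G_1-c)+N(G_2-c)+N_c(G_1)N_c(G_2),
\]
where $N_c$ counts connected sets containing $c$, and $S$ decomposes similarly.

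The vertices of $G_i$ that may have degree below $3$ are $c$ and its neighbours. To restore the degree condition, I form $G_i^+$ by adding a new vertex adjacent to $c$ and to all of $\Gamma_{G_i}(c)$. Each $G_i^+$ is smaller than $G$ and has minimum degree at least $3$, so induction applies to it, either through Theorem~\ref{thm:biconnected} or through the cut-vertex bound. Connected sets of $G_i^+$ relate to rooted connected sets of $G_i$ by a simple bijection: add or remove the new vertex, which behaves like a twin of $c$. This converts $F(G_i^+)\ge v(G_i^+)$ into rooted estimates for $G_i$.

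Substituting these rooted estimates into the product formula gives $F(G)\ge 2n$. The cut-edge case, where $G-c$ has a bridge $ab$, is handled identically with $a$ and $b$ as roots. Tracking equality forces each $G_i^+$ to be complete with $v(G_i)=4$, which gives exactly $G_{\mathrm c}$ and $G_{\mathrm b}$.

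\textbf{Main obstacle.} The hardest step is the algebra in the splitting. The product term $N_c(G_1)N_c(G_2)$ has to dominate the negative contributions, since each $|U|$ is measured against the full $n$ rather than $v(G_i)$. This requires lower bounds on $N_c(G_i)$ relative to $N(G_i-c)$, for example $N_c(G_i)\ge N(G_i-c)+1$ via adding $c$ to sets that meet $\Gamma(c)$. I expect to need a joint inequality combining the rooted $F$-bound with such a count comparison, in the style of Haslegrave's $(S,H)$-connected set estimates. Finding the correct strengthened induction hypothesis is the crux of the argument.
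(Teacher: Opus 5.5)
Your plan for the cut-vertex case has the same skeleton as the paper's: split at a cut edge, or at a cut vertex when there is no cut edge so that each root keeps degree at least $2$; add a twin of the root to get $H_i^+$ with $\delta\ge3$; apply induction to $H_i^+$. But the step you call the crux is exactly what is missing, and the candidate you suggest for it is false. Write $N_i=N(H_i,r_i)$, $\overline N_i=\overline N(H_i,r_i)$, $F_i=F(H_i,r_i)$, with $\lambda=0$ for a cut vertex and $\lambda=1$ for a cut edge. The exact difference $F(G)-F(H_1^+)-F(H_2^+)$ contains the terms $-(n_j-2+\lambda)(\overline N_i+1-N_i-F_i)$ and $(1+\lambda)(N_1N_2-n_2N_1-n_1N_2)$. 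So you need both $N_i+F_i\ge\overline N_i+1$ and $N_i\ge2n_i$, the latter with equality only for $K_4$, which is what pins down $G_{\mathrm c}$ and $G_{\mathrm b}$. Your inequality $N(H,r)\ge\overline N(H,r)+1$ fails under exactly these hypotheses. If $r$ is joined to two vertices of a $K_m$ with $m\ge4$, then $N(H,r)=1+3\cdot2^{m-2}<2^m=\overline N(H,r)+1$, so the $F$-term cannot be dropped. The paper reaches $N+F\ge\overline N+1$ (Corollary~\ref{cor:rhononpositive}) only by generalizing to arbitrary boundary sets. For $B$ meeting every component of $G$, with every vertex outside $B$ of degree at least $3$, it proves $q(G,B)+2E(G,B)\ge N(G)+1$ and, if $|B|\ge2$, $q(G,B)+E(G,B)\ge N(G)+1$ (Lemma~\ref{lem:compbounds}). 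The proof is a simultaneous induction that deletes a vertex $r\in B$, enlarges the boundary to $(B\setminus\{r\})\cup\Gamma(r)$, and controls the split-off part $W$ through $N(H,r)\ge2\omega-1$. The rooted inequality is then case (ii) applied to $(H-r,\Gamma_H(r))$. The bound $N_i\ge2n_i$ also needs its own induction (Lemma~\ref{lem:rootcount}). Without these two estimates, substituting rooted bounds into the product formula does not give $F(G)\ge2n$.

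The equality argument for the 2-connected case is also missing its key idea. Theorem~\ref{thm:biconnected} is attained by every cycle, since $F(C_n)=n$. So strict inequality ``whenever some pair is nonadjacent'' must use $\delta(G)\ge3$, and your sketch never says where that hypothesis enters. Moreover, the correction in the contraction step is always at least $1$, so the question is whether it exceeds $1$, not whether it is positive. Nothing in Lemma~\ref{lem:descent} lets you steer the contracted edge toward a nonadjacent pair either. The paper's route is as follows. Equality forces $F(G-v,u)=F(G-u,v)=0$. Since $G-u$ and $G-v$ have minimum degree at least $2$, Corollary~\ref{cor:universalequality} (from Lemma~\ref{lem:stricthalf}) makes $u$ and $v$ adjacent to every other vertex. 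Then $N(G,\{u,v\})-\overline N(G,\{u,v\})=1$ gives $N(G-\{u,v\})=2^{n-2}-1$, so $G$ is complete.
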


\begin{theorem}\label{thm:cutaverage}
Let $G$ be connected with $\delta(G)\ge3$, and write $n=v(G)$.
\begin{enumerate}
 \item If $G$ has a cut vertex but no cut edge, then $A(G)\ge n/2+7/78$,
 with equality if and only if $G\cong G_{\mathrm c}$.
 \item If $G$ has a cut edge, then $A(G)\ge n/2+4/47$,
 with equality if and only if $G\cong G_{\mathrm b}$.
\end{enumerate}
\end{theorem}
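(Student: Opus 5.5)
The approach is to rewrite the claim as a linear inequality between $F$ and $N$. Since $A(G)=n/2+F(G)/(2N(G))$, part (i) is equivalent to $F(G)\ge \tfrac{7}{39}N(G)$ and part (ii) to $F(G)\ge\tfrac{8}{47}N(G)$. These constants are sharp. For $G_{\mathrm c}$ the connected sets are the $29$ nonempty subsets of one $K_4$, plus the $7\cdot7=49$ sets through the shared vertex meeting both sides, so $N=78$. For $G_{\mathrm b}$ we get $N=15+15+8\cdot8=94$. In both graphs Theorem~\ref{thm:main} gives $F=2n$, so the ratios are $14/78$ and $16/94$. Note that Theorem~\ref{thm:main} alone ($F\ge 2n$) is far too weak here, because $N$ grows exponentially. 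So we need the rooted counting estimates to bound $F$ proportionally to $N$.

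\emph{Case (i), cut vertex $v$.} Split $G$ at $v$ into rooted parts $(G_1,v)$ and $(G_2,v)$, where $G_2$ collects all but one block-branch at $v$. Then $n=n_1+n_2-1$. For each part let $a_i$ be the number of connected sets of $G_i$ containing $v$, and $s_i$ the sum of their orders. Let $N_i',S_i'$ be the corresponding quantities for connected sets of $G_i-v$. Every connected set of $G$ either avoids $v$ (and then lies in one $G_i-v$), or is $U_1\cup U_2$ with $v\in U_i\in\CC(G_i)$. Hence
\begin{equation*}
N(G)=N_1'+N_2'+a_1a_2,\qquad F(G)=\sum_i(2S_i'-nN_i')+\bigl(2(s_1a_2+s_2a_1)-(n+2)a_1a_2\bigr).
\end{equation*}
The key quantity is the rooted excess $R_i=2s_i-(n_i+1)a_i$. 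With it the cross term equals $R_1a_2+R_2a_1$.

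The main step, and the expected obstacle, is a rooted estimate of the following form. If every vertex of $G_i$ other than $v$ and its neighbours has degree at least $3$, then $R_i\ge a_i/2$, with equality exactly when $G_i\cong K_4$ (where $a_i=8$ and $s_i=20$). In addition, the negative contribution $\sum(2S_i'-nN_i')$ of the sets avoiding $v$ must be dominated by a multiple of $a_i$. I would prove this by induction on $n_i$, in the same way as Theorem~\ref{thm:main}. Each time the rooted part itself has a cut vertex or cut edge, split again and add the auxiliary vertex adjacent to the root and all its neighbours, so that minimum degree $3$ is restored and Theorem~\ref{thm:main} applies to the smaller graphs. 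In the $2$-connected case, use Theorem~\ref{thm:biconnected} together with the observation that sets through $v$ are at least as numerous as sets avoiding $v$ (complementation/augmentation by $v$). Inserting these bounds, $F(G)-\tfrac7{39}N(G)$ becomes a bilinear expression in $(a_1,a_2)$ with nonnegative coefficients. It is minimised when $a_1=a_2=8$ and both parts are $K_4$, and this gives equality only for $G_{\mathrm c}$.

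\emph{Case (ii), cut edge $uw$.} Delete $uw$ to get rooted parts $(G_1,u)$ and $(G_2,w)$ with $n=n_1+n_2$. The same decomposition holds with the cross term $a_1a_2$ now counting sets containing both $u$ and $w$, and the cross contribution again equals $R_1a_2+R_2a_1$, with $n_i$ in place of $n_i+1$ in $R_i$. The same rooted estimate then yields $F\ge \tfrac8{47}N$, with equality forced by both parts being $K_4$, i.e.\ $G\cong G_{\mathrm b}$. If $G$ has several cut vertices or edges, choose the split so that one side is a leaf block-branch. Then apply the estimate to that side and induction (via the case already proved) to the other. I expect verifying that the lower-order, sets-avoiding-the-root terms never beat the exponentially large cross term for small parts (orders $5$ to $7$) to need a short finite check.
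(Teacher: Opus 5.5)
Your reformulation as $F(G)\ge\frac{7}{39}N(G)$, resp.\ $F(G)\ge\frac{8}{47}N(G)$, and your sharpness counts are fine. However, the two claims your plan rests on are wrong.

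First, the cross-term identity in case~(i) is off. Since $n=n_1+n_2-1$ gives $n_1+n_2+2=n+3$, one has $2(s_1a_2+s_2a_1)-(n+2)a_1a_2=R_1a_2+R_2a_1+a_1a_2$, not $R_1a_2+R_2a_1$. (In case~(ii) the identity is correct, since there $2s_i-n_ia_i=F(G_i,u)+a_i$.)

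Second, the ``main step'' $R_i\ge a_i/2$ is false. Your $R_i$ is exactly $F(G_i,v)$, which vanishes whenever $v$ is adjacent to all other vertices of $G_i$ (Corollary~\ref{cor:universalequality}). Your own data for $K_4$ give $R_i=2\cdot 20-5\cdot 8=0$, not $4$. For $K_5$ one gets $R_i=0$ with $a_i=16$, so no bound $R_i\ge c\,a_i$ with $c>0$ can hold. The two mistakes cancel at leading order, which is why your bilinear picture looks plausible. In fact the positive term $a_1a_2$ that you dropped is what must pay for everything, using only $R_i\ge 0$.

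The real difficulty is the contribution of the sets avoiding $v$, namely $F(G_i-v)-n_j\,\overline N(G_i,v)$ for $j\ne i$, and your tool for it fails. Sets through $v$ need not be at least as numerous as sets avoiding $v$, even in $2$-connected graphs with $\delta\ge 3$. For $K_{3,7}$ rooted at a vertex of the $7$-side, $N(H,v)=1+7\cdot 2^6=449<450=N(K_{3,6})=\overline N(H,v)$.

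The paper uses two ingredients instead:
\begin{itemize}
\item the inequality $\overline N(H_i,r_i)\le N(H_i,r_i)+F(H_i,r_i)-1$, i.e.\ $\rho_i\le 0$ (Corollary~\ref{cor:rhononpositive}, which rests on the $q(G,B)$, $E(G,B)$ estimates of Lemma~\ref{lem:compbounds}(ii));
\item a single application of Theorem~\ref{thm:main} to each $H_i^+$, which controls $S(H_i-r_i)$.
\end{itemize}
Both enter the exact identity~\eqref{eq:twinexact}.

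The surviving negative terms still carry the other side's order $n_j$. To reach a form in $(a_1,a_2)$ alone you need $n_j\le (a_j+8)/4$ from Lemma~\ref{lem:rootcount}(i), which your plan never uses. With these ingredients, splitting at any cut vertex gives directly $39F(G)-7N(G)\ge\frac{25}{2}z_1z_2+\frac{69}{2}(z_1+z_2)$, where $z_i=N(H_i,r_i)-8\ge 0$. Equality forces $z_i=0$, hence $n_i=4$ and $H_i\cong K_4$. No new induction, recursive splitting, choice of a leaf block, or finite check is required.
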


\begin{theorem}\label{thm:fixeddegree}
Let $G$ be a connected simple graph of order $n$ with $\delta(G)=k\ge3$. If any of the following conditions holds, then
\begin{equation*}
 A(G)\ge\mu_{n,k}.
\end{equation*}
\begin{enumerate}
 \item $G$ has a cut vertex;
 \item $G=G[A]\vee G[B]$, where $V(G)=A\mathbin{\dot\cup}B$ and $|A|,|B|\ge2$;
 \item $3n\le4(k+1)$.
\end{enumerate}
Within the union of these classes, equality holds if and only if $G\cong H_{n,k}$; under condition \textnormal{(i)}, the inequality is always strict.
Under condition \textnormal{(ii)}, the stronger counting comparison $F(G)\ge F(H_{n,k})$ also holds.
\end{theorem}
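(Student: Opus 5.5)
The proof splits along the three conditions, and in each case it is compared with the explicit count for $H_{n,k}$ supplied by Lemma~\ref{lem:candidate-count}. Since $A(G)=n/2+F(G)/(2N(G))$, the target inequality $A(G)\ge\mu_{n,k}$ is equivalent to
\begin{equation*}
\frac{F(G)}{N(G)}\ge\frac{F(H_{n,k})}{N(H_{n,k})},\qquad\text{where } \frac{F(H_{n,k})}{N(H_{n,k})}=\frac{2+(k-1)2^m}{2^n-2^m}.
\end{equation*}
This ratio is tiny, roughly $(k-1)2^{-(k+1)}$, and it decreases in $k$. For $k\ge3$ it is at most $\mu_{n,3}-n/2$.

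\emph{Condition (i).} Theorem~\ref{thm:cutaverage} gives $A(G)-n/2\ge 4/47$ whenever $G$ has a cut vertex. I will check that this exceeds $\sup_n(\mu_{n,k}-n/2)$ for every $k\ge3$. For $k=3$ the value is $(2+2^{m+1})/(2(2^{m+4}-2^m))=(1+2^m)/(15\cdot 2^m)\le 2/15$ when $m=0$. That is larger than $4/47$, so the crude bound does not suffice. However, $m=0$ means $n=k+1$, so $G=K_n$, which has no cut vertex. A cut vertex with $\delta\ge k$ forces $n\ge 2k+1$, hence $m\ge k$. Then $\mu_{n,k}-n/2\le (1+(k-1)2^{m-1})/(2^n-2^m)\approx (k-1)/2^{k+2}\le 1/16<4/47$. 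This gives strict inequality in case (i).

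\emph{Condition (ii).} Here I follow the edge-addition comparison described in the introduction. Pick a vertex $v$ of degree $k$. Let $G'$ be obtained by adding all missing edges among $V(G)\setminus(\{v\}\cup\Gamma(v))$ and between those vertices and $\Gamma(v)$, and also completing $\Gamma(v)$, so that $G'\cong H_{n,k}$. The key claim is $F(G)\ge F(H_{n,k})$, proved by showing that every set which becomes connected after adding the edges contributes $2|U|-n\le0$ on average. Because $G$ is a join of two parts of size at least $2$, every set meeting both $A$ and $B$ is already connected in $G$. The newly connected sets therefore lie inside $A$ or inside $B$, and so have size at most $\max(|A|,|B|)$. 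I would pair each such $U\subseteq A$ with its complement inside $A$, or bound the count directly, to get $\sum(2|U|-n)\le0$. Hence $F$ can only decrease when passing to $H_{n,k}$. Since $N$ increases under edge addition, $F(G)\ge F(H)$ together with $N(G)\le N(H)$ and $F\ge0$ yields $F(G)/N(G)\ge F(H)/N(H)$. Equality forces no new sets, that is, $G\cong H_{n,k}$.

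\emph{Condition (iii).} When $3n\le4(k+1)$ we have $m\le (n-4)/4$, roughly, so every vertex is adjacent to more than three quarters of the graph. Any two vertices then have a common neighbor, and almost every set of size at least $n/4+1$ is connected. I would apply the same completion to $H_{n,k}$ and bound the newly connected sets: such a set $U$ must avoid the whole closed neighborhood of some vertex, so $|U|\le n-k-1=m<n/2$. The same sign argument then applies.

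\emph{Main obstacle.} The hardest step is controlling the average size of the newly connected sets in (ii) and (iii) precisely enough to get the sign right. My first attempt would be the observation that each new set lies in the non-neighborhood of some vertex and hence has size at most $n-k-1\le n/2$ in these regimes, refined near $|A|$ or $|B|$ close to $n/2$ by a direct injection.
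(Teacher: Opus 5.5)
Your framework for (ii) and (iii) is the paper's: complete $G$ to $G^v\cong H_{n,k}$, prove $F(G)\ge F(H_{n,k})$, and combine this with $N(G)\le N(H_{n,k})$. The gap is in (ii). There the key inequality $\sum_{U\in\calA(G,v)}(n-2|U|)\ge0$ is never proved, and none of the mechanisms you suggest can prove it.

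\begin{itemize}
\item The bound $|U|\le\max(|A|,|B|)$ gives no sign.
\item A set-by-set bound $2|U|\le n$ is false for joins. Take $G=K_{3,n-3}$ with $n>8$, with $A$ the side of size $n-3$ (it contains every vertex of degree $k=3$, hence $v$) and $B$ the side of size $3$. Every subset of $A\setminus\{v\}$ with at least two vertices is newly connected. This includes $A\setminus\{v\}$ itself, of size $n-4=n-k-1>n/2$.
\item Pairing $U\subseteq A$ with $A\setminus U$ fails in the same example. The set $A\setminus U$ contains $v$, which has no neighbor in $A$ even in $G^v$, so it is never a new set.
\end{itemize}

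What is missing is an averaging argument. The paper first identifies the new sets exactly.
\begin{itemize}
\item Those inside $B$, the side not containing $v$, are precisely $\DD(G[B])$, because $G^v[B]$ is complete.
\item Those inside $A$ form $\calA(G[A],v)$.
\end{itemize}
Both families are almost closed under one-element deletion: every member $U$ has at least $|U|-2$ vertices $u$ with $U\setminus\{u\}$ still in the family. Deleting $u$ can make a disconnected set connected only when $\{u\}$ is one of exactly two components. It can disconnect $G^v[U]$ only when $v\in U$ and $u$ is the unique neighbor of $v$ in $U$.

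Now double count, in a family $\mathcal F$ on an $h$-set, the pairs $U'\subset U$ with $|U\setminus U'|=1$. This gives $\sum_{U\in\mathcal F}(|U|-2)\le\sum_{U\in\mathcal F}(h-|U|)$, so the average size is at most $(h+2)/2$. Hence
\[
F(G)-F(H_{n,k})\ge(b-2)|\calA(G[A],v)|+(a-2)|\DD(G[B])|\ge0,
\]
where $a=|A|$ and $b=|B|$. This is exactly where $|A|,|B|\ge2$ is used, and your sketch never uses it.

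There are two smaller errors elsewhere.

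In (iii), the claim that a new set avoids the closed neighborhood of some vertex, so $|U|\le m$, is false. Take $K_n$ minus a perfect matching with $n\ge6$ even, so $k=n-2$, $m=1$, and $3n\le4(k+1)$ holds. A non-adjacent pair avoiding $v$ is then a new connected set of size $2>m$. The correct estimate takes a smallest component $C$ of $G[U]$ and $u\in C$. Then $k\le d_G(u)\le|C|-1+n-|U|\le n-1-|U|/2$, so $|U|\le 2m$. The condition $2m\le n/2$ is precisely the hypothesis $3n\le4(k+1)$; your version never locates where that hypothesis enters.

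In (i), the chain ending in ``$\le 1/16$'' is not a valid inequality. At $k=m=3$ one has $\mu_{7,3}-7/2=3/40>1/16$. The conclusion survives because, for $k\ge3$ and $m\ge k$,
\[
\mu_{n,k}-\frac n2=\frac{k-1}{2(2^{k+1}-1)}+\frac{1}{2^m(2^{k+1}-1)}\le\frac{3}{40}<\frac{4}{47},
\]
which is the paper's computation.
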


\section{Lower bounds for \texorpdfstring{$F(G)$}{F(G)} in 2-connected graphs}\label{sec:biconnected}

For a graph $G$ and a nonempty set $X\subseteq V(G)$, we use the notations of Vince~\cite[Section 3.1]{VinceUIS2025}:
$N(G,X)=|\{U\in\CC(G):X\subseteq U\}|$ and $\overline N(G,X)=|\{U\in\CC(G):U\cap X=\varnothing\}|=N(G-X)$. In particular, when $X=\{v\}$, write $N(G,v)$ and $\overline N(G,v)$; then
$N(G)=N(G,v)+\overline N(G,v)$.
Following Vince~\cite[Section 8]{VinceProblems2021}, we consider nonempty connected sets whose complements are also nonempty and connected.
The following lemma will help us keep both a set and its complement connected.

\begin{lemma}\label{lem:twoboundary}
Let $G$ be connected with at least two vertices, and let $D\subseteq V(G)$.
Suppose that, for every $v\in V(G)$, every component of $G-v$ meets $D$.
Then $D$ contains at least two distinct vertices that are not cut vertices.
\end{lemma}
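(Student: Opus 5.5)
We split into two cases, according to whether $G$ is 2-connected. In both cases we first record that the hypothesis forces $D$ to be nonempty with enough spread. Take any $v$. Then $G-v$ has at least one component (since $v(G)\ge2$), and that component meets $D$. Moreover, if $D$ were a single vertex $d$, then applying the hypothesis with $v=d$ would give a component of $G-d$ missing $D$, a contradiction. Hence $|D|\ge2$.

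\emph{Case 1: $G$ has no cut vertex.} This includes $G=K_2$ and every 2-connected graph. Here no vertex is a cut vertex, so any two distinct vertices of $D$ do the job.

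\emph{Case 2: $G$ has a cut vertex.} We use the block--cutvertex tree of $G$. Since $G$ has a cut vertex, it has at least two blocks. A tree with at least two vertices has at least two leaves, and the leaves of the block--cutvertex tree are blocks, so $G$ has at least two \emph{leaf blocks} $B_1,B_2$. Each $B_i$ contains exactly one cut vertex $c_i$ of $G$.

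Next we show that each leaf block $B_i$ meets $D$ in a vertex other than $c_i$. The set $B_i\setminus\{c_i\}$ is nonempty, because every block has at least two vertices. It is also a union of components of $G-c_i$. Indeed, no vertex of $B_i\setminus\{c_i\}$ is adjacent to anything outside $B_i$, since such a vertex is not a cut vertex and $B_i$ is a leaf block. Moreover, $B_i-c_i$ is connected: either $B_i$ is 2-connected, or $B_i=K_2$ and $B_i-c_i$ is a single vertex. So $B_i\setminus\{c_i\}$ is exactly one component of $G-c_i$. By the hypothesis applied to $v=c_i$, this component contains some $d_i\in D$.

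Finally, we check that the two vertices $d_1,d_2$ are valid. Each $d_i$ lies in $B_i\setminus\{c_i\}$, and a vertex lying in a single block that is not the block's attaching cut vertex is not a cut vertex of $G$. The vertices are distinct because distinct leaf blocks intersect only in cut vertices, if at all. Thus $d_1\ne d_2$, and $D$ contains at least two distinct non-cut vertices.

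The main obstacle is the structural fact that $B_i\setminus\{c_i\}$ is exactly one component of $G-c_i$ and contains no cut vertices. This is standard block theory, but it should be stated carefully, including the degenerate case where the leaf block is a bridge $K_2$.
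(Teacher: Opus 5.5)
Your proof is correct, but it follows a different route from the paper.

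\textbf{The paper's route.} The paper avoids block theory and uses an extremal argument. It fixes an arbitrary vertex $w$ and chooses $u\in D\setminus\{w\}$ so that the component $C$ of $G-u$ containing $w$ has maximum order. If $u$ were a cut vertex, a vertex $v\in D$ in another component of $G-u$ would give a contradiction: $V(C)\cup\{u\}$ is connected in $G-v$, so the component of $G-v$ containing $w$ would be strictly larger. Rerunning the argument with $w=u$ produces the second vertex.

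\textbf{Your route.} You use the block--cutvertex tree. When $G$ has a cut vertex, there are at least two leaf blocks. Applying the hypothesis at the attaching cut vertex $c_i$ of a leaf block $B_i$ places a vertex of $D$ in $B_i\setminus\{c_i\}$, which consists of non-cut vertices.

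\textbf{Comparison.}
\begin{itemize}
\item Your argument gives a sharper structural conclusion: $D$ meets the interior of every leaf block, so $D$ contains at least as many non-cut vertices as there are leaf blocks.
\item It also needs the hypothesis only at the cut vertices of leaf blocks (besides the easy $|D|\ge 2$ step).
\item The cost is reliance on standard block facts and a separate case for graphs without cut vertices. The facts used are: every edge lies in a unique block, a vertex lies in two blocks exactly when it is a cut vertex, and the leaves of the block tree are blocks.
\item The paper's proof is shorter and fully self-contained. For the application in Lemma~\ref{lem:descent}, only two distinct non-cut vertices of $D$ are needed, so both versions suffice.
\end{itemize}

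\textbf{One step to justify more carefully.} Your claim that no vertex of $B_i\setminus\{c_i\}$ has a neighbour outside $B_i$ is correct, but ``since such a vertex is not a cut vertex'' is terse. The actual reason is this: an edge $xy$ with $x\in B_i\setminus\{c_i\}$ and $y\notin B_i$ would lie in a block other than $B_i$. Then $x$ would belong to two blocks and would be a cut vertex different from $c_i$, contradicting that $B_i$ is a leaf block.
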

\begin{proof}
Clearly $|D|\ge 2$.  Fix any $w\in V(G)$. We show that $D\setminus\{w\}$ contains a vertex that is not a cut vertex.
The hypothesis applied to $G-w$ ensures that $D\setminus\{w\}$ is nonempty.
Choose $u\in D\setminus\{w\}$ so that the component $C$ of $G-u$ containing $w$ has maximum order.
If $u$ is a cut vertex, take another component $C'$ of $G-u$ and choose $v\in D\cap V(C')$.
Since $G$ is connected, $u$ has a neighbor in $C$.
Thus $V(C)\cup\{u\}$ induces a connected subgraph of $G-v$, so the component of $G-v$ containing $w$ has more vertices than $C$.
This contradicts the choice of $u$, therefore $u$ is not a cut vertex.
Applying the same argument with $w=u$ gives a second such vertex in $D$, distinct from $u$.
\end{proof}

The next lemma provides the edge needed for the contraction argument below.

\begin{lemma}\label{lem:descent}
Let $G$ be a 2-connected graph. Then there exists an edge $uv\in E(G)$ such that
$G-\{u,v\}$ is connected and $N(G,\{u,v\})\ge\overline N(G,\{u,v\})+1$.
\end{lemma}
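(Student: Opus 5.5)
The approach is to pick the edge $uv$ via Lemma~\ref{lem:twoboundary} so that $G-\{u,v\}$ is connected, and then to build an injection from $\CC(G-\{u,v\})$ into $\{W\in\CC(G):\{u,v\}\subseteq W\}$ that misses at least one set.

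\emph{Choosing the edge.} Fix any vertex $u$. Since $G$ is 2-connected, $G-u$ is connected and has at least two vertices. Put $D=\Gamma_G(u)$. For any $x\ne u$, suppose some component $C$ of $G-u-x$ missed $D$. Then $C$ would also be a component of $G-x$, so $x$ would be a cut vertex of $G$, which is impossible. Hence every component of $(G-u)-x$ meets $D$, and Lemma~\ref{lem:twoboundary} applied to $G-u$ gives some $v\in\Gamma_G(u)$ that is not a cut vertex of $G-u$. Then $uv\in E(G)$ and $G-\{u,v\}$ is connected. There is some freedom left, since there are at least two such $v$ and $u$ was arbitrary; I would keep it in reserve in case the injection below needs an extremal choice, for instance $|\Gamma(u)\cup\Gamma(v)|$ maximal.

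\emph{The injection.} Let $U\in\CC(G-\{u,v\})$.
\begin{enumerate}
 \item If $U$ meets $\Gamma(u)\cup\Gamma(v)$, then $U\cup\{u,v\}$ is connected, because $uv$ is an edge. Set $\varphi(U)=U\cup\{u,v\}$. This is injective on this class, and its images are exactly the sets $W\in\CC(G)$ containing $u,v$ with $W\setminus\{u,v\}$ connected and nonempty.
 \item If $U$ misses $\Gamma(u)\cup\Gamma(v)$, let $K_U$ be the vertex set of the component of $G-U$ containing the edge $uv$, and set $\varphi(U)=K_U$. Then $K_U$ is connected and contains $u,v$. Moreover $K_U\setminus\{u,v\}$ is typically disconnected, or else it sits in a recognisable position relative to $U$, and this is what should separate these images from those in the first class.
\end{enumerate}
The set $\{u,v\}$ itself contains $u,v$, but it is not $U\cup\{u,v\}$ for any nonempty $U$. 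I would aim to show that it is also not of the form $K_U$, which holds since $U$ is nonadjacent to $u,v$ and $G-\{u,v\}$ is connected. That unused set supplies the required $+1$.

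\emph{Main obstacle.} The hard part is injectivity in the second class, and its disjointness from the first. Different $U$ can leave the same component $K_U$, and $K_U\setminus\{u,v\}$ might happen to be connected. My first attempt would replace $K_U$ by $V(G)\setminus U$ whenever that set is connected. By the complementary-pair idea of Vince this is injective, since $U$ is recovered as the complement. Only the case where $V(G)\setminus U$ is disconnected would remain. In that case I would map $U$ to $V(G)\setminus U$ with its components not containing $uv$ removed. Then I would use the extremal choice of $uv$, together with 2-connectivity, to show two things: distinct $U$ can be recovered from these images, because the removed components attach to $U$; and the images have $W\setminus\{u,v\}$ disconnected, so they avoid the first class. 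If this fails, the fallback is a charging argument that pairs each collision with an unused set of the form $W\ni u,v$ with $W\setminus\{u,v\}$ disconnected.
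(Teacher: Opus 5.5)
Your proposal has a genuine gap, and it is the step you label the main obstacle. No injection is constructed for the sets $U$ that miss $\Gamma(u)\cup\Gamma(v)$, and for an edge chosen only by your first step, no such injection can exist. Take $K_4$ on $\{x,y,z_1,z_2\}$ and add a path $x\,a\,b\,y$ through two new vertices $a,b$. This graph is 2-connected. With $u=a$ we get $D=\{x,b\}$, and both $x$ and $b$ are non-cut vertices of $G-a$. So your procedure may return $v=b$, and indeed $G-\{a,b\}=K_4$ is connected. But $\overline N(G,\{a,b\})=N(K_4)=15$. A set $\{a,b\}\cup U$ with $U\subseteq\{x,y,z_1,z_2\}$ is connected if and only if $U=\varnothing$ or $U$ meets $\{x,y\}$, so $N(G,\{a,b\})=1+(2^4-2^2)=13<16$. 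Your maps collide accordingly: for $U=\{z_1\}$ the complement $V(G)\setminus U$ is connected and equals $\{a,b\}\cup\{x,y,z_2\}$, which is the class-(i) image of $\{x,y,z_2\}$. So the extremal choice you keep in reserve is not optional; it carries the whole lemma. You give neither a specific rule (maximal $|\Gamma(u)\cup\Gamma(v)|$ or otherwise) nor an argument that it works.

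The missing idea is to stop trying to certify a particular edge and instead descend from large sets. The paper considers all $U\subsetneq V(G)$ with $|U|\ge2$, $G[U]$ and $G-U$ connected, and $N(G,U)\ge\overline N(G,U)+1$. This family is nonempty, since $U=V(G)\setminus\{w\}$ has $N(G,U)=2$ and $\overline N(G,U)=1$. Take such a $U$ of minimum size and suppose $|U|\ge3$. Let $D$ be the set of vertices of $U$ with a neighbor outside $U$. For $t\in U$, a component of $G[U]-t$ missing $D$ would be separated from $V(G)\setminus U$ in $G-t$, contradicting 2-connectivity. So Lemma~\ref{lem:twoboundary}, applied to $G[U]$ rather than to $G-u$, gives $v_1\ne v_2$ in $D$ with $U_i'=U\setminus\{v_i\}$ connected, and $G-U_i'$ is connected because $v_i\in D$. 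Let $\mathcal A_i$ and $\mathcal B_i$ be the connected sets meeting $U$ exactly in $\{v_i\}$ and exactly in $U_i'$, respectively. Passing from $U$ to $U_i'$ changes $N(G,\cdot)-\overline N(G,\cdot)$ by $|\mathcal B_i|-|\mathcal A_i|$. For $i\ne j$ the map $U_0\mapsto U_0\cup U_j'$ injects $\mathcal A_i$ into $\mathcal B_j$, so $|\mathcal B_i|\ge|\mathcal A_i|$ for some $i$, contradicting minimality. Hence $|U|=2$, and $U$ is the required edge. This local one-vertex comparison, balanced between two removable boundary vertices, is all that is needed; no global injection for a fixed edge is ever attempted.
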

\begin{proof}
Since $G$ is 2-connected, then $G[V(G)\setminus v]$ and $G-(V(G)\setminus v)$ are all connected and $2=N(G,V(G)\setminus v)=\overline N(G,V(G)\setminus v)+1$. So we can choose $U\subsetneq V(G)$ of minimum size such that $|U|\ge2$, both $G[U]$ and $G-U$
are connected, and $N(G,U)\ge\overline N(G,U)+1$.

Suppose that $|U|>2$, and let $D$ consist of the vertices of $U$ adjacent to $V(G)\setminus U$.
For every $u\in U$, each component of $G[U]-u$ meets $D$; otherwise, $G-u$ would be disconnected.
By Lemma~\ref{lem:twoboundary}, there are distinct $v_1,v_2\in D$ such that
$U'_i=U\setminus\{v_i\}$ induces a connected graph. Since $v_i\in D$, $G-U'_i$ is also connected.

For $i=1,2$, define
$\mathcal A_i=\{U_0\in\CC(G):U_0\cap U=\{v_i\}\}$ and
$\mathcal B_i=\{U_0\in\CC(G):U_0\cap U=U'_i\}$.
For distinct $i,j\in\{1,2\}$, consider the map
\[
 \begin{aligned}
 \varphi_{ij}\colon\mathcal A_i&\longrightarrow\mathcal B_j,\\
 U_0&\longmapsto U_0\cup U'_j.
 \end{aligned}
\]
For $U_0\in\mathcal A_i$, the union $U_0\cup U'_j$ is connected because both sets contain $v_i$,
and it meets $U$ exactly in $U'_j$. Hence it belongs to $\mathcal B_j$, so $\varphi_{ij}$ is well-defined.
For $U_1,U_2\in\mathcal A_i$, $U_1\cup U'_j=U_2\cup U'_j$ simply implies $U_1=U_2$, so $\varphi_{ij}$ is injective. Hence $|\mathcal B_1|\ge|\mathcal A_2|,|\mathcal B_2|\ge|\mathcal A_1|$ yields $|\mathcal B_1|+|\mathcal B_2|\ge|\mathcal A_1|+|\mathcal A_2|$,
then $|\mathcal B_i|\ge|\mathcal A_i|$ for some $i$.
For this $i$:
\[
 N(G,U'_i)-\overline N(G,U'_i)
 =N(G,U)-\overline N(G,U)+|\mathcal B_i|-|\mathcal A_i|\ge1,
\]
contradicting the minimality of $U$.
Thus $U=\{u,v\}$, and $uv$ is the required edge.
\end{proof}

To compare $F$ before and after contraction, we also need connected sets containing a fixed root.
For a graph $H$ rooted at $r$, define
$S(H,r)=\sum_{\substack{r\in U\in\CC(H)}}|U|$.
Also put
$F(H,r)=2S(H,r)-(\ord{H}+1)N(H,r)$.

\begin{corollary}\label{cor:roothalf}
Every connected rooted graph $(H,r)$ satisfies
$F(H,r)\ge0$.
\end{corollary}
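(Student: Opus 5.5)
The plan is to prove the inequality $2S(H,r)\ge(\ord{H}+1)N(H,r)$, which says that the connected sets containing $r$ have average order at least $(\ord{H}+1)/2$, by induction on $n=\ord{H}$. The base case $n=1$ is immediate: $N(H,r)=S(H,r)=1$, so $F(H,r)=2-2=0$. I present this as a plan: I have not found a clean way to close the key inequality in the inductive step, and I indicate below what I would try.

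For $n\ge2$ I would pick a neighbour $v$ of $r$ and contract the edge $rv$ to a new root $w$, giving a connected rooted graph $(H',w)$ of order $n-1$. Every connected set of $H$ containing $r$ is of exactly one of two kinds.
\begin{enumerate}
 \item It contains both $r$ and $v$. Then it is $(W\setminus\{w\})\cup\{r,v\}$ for a unique connected $W\ni w$ of $H'$. So these sets are in bijection with the rooted connected sets of $H'$, with sizes increased by one. Their total contribution to $F(H,r)$ is
 \[
 2\bigl(S(H',w)+N(H',w)\bigr)-(n+1)N(H',w)=F(H',w)+N(H',w).
 \]
 \item It contains $r$ but not $v$. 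Call this family $\mathcal R$. Since $U\cup\{v\}$ is connected for every $U\in\mathcal R$, the map $U\mapsto U\cup\{v\}$ injects $\mathcal R$ into the first family.
\end{enumerate}
By the induction hypothesis $F(H',w)\ge0$, so it suffices to prove
\[
 \sum_{U\in\mathcal R}\bigl(2|U|-(n+1)\bigr)\ge-N(H',w).
\]

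To establish this inequality I would pair each $U\in\mathcal R$ with its image $U\cup\{v\}$. A pair together contributes $4|U|-2n$ to $F(H,r)$, and first-kind sets outside the image contribute $2|U_0|-(n+1)$ each. The problem therefore becomes a rooted comparison between small and large connected sets. I would attack it with the injection technique of Lemma~\ref{lem:descent}: sets meeting a chosen part of the graph only in a single vertex are mapped injectively, by adding that part, to sets meeting it in a large subset.

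The freedom to choose $v$ would be used as in Lemma~\ref{lem:twoboundary}. I would take $v$ to be a non-cut vertex that is far from $r$, for instance a leaf of a BFS tree rooted at $r$, so that $H-v$ is connected and Corollary's statement can also be applied to $(H-v,r)$. This splits $F(H,r)$ as $F(H-v,r)-N(H-v,r)$ plus the contribution $2S_v-(n+1)N_v$ of the rooted sets containing $v$. The required bound is then $2S_v-(n+1)N_v\ge N(H-v,r)$. I would try to prove it by sending each rooted set $U$ of $H-v$ to a set containing $v$ of size at least $n+1-|U|$, for example via a path from $v$ to $U$ together with a complement-type construction.

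I expect this last comparison to be the main obstacle. A naive injection loses exactly the term $N(H',w)$, or $N(H-v,r)$, so the choice of $v$ and of the map must be made carefully enough to gain at least one unit per set. If the direct injection does not work, my fallback is to set it up as a minimal-counterexample argument in the style of Lemma~\ref{lem:descent}: choose the connected set through $r$ that minimizes the relevant difference, and derive a contradiction by shrinking it at a non-cut boundary vertex supplied by Lemma~\ref{lem:twoboundary}.
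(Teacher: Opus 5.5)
You have not proved the statement. The inductive step depends on the inequality $\sum_{U\in\mathcal R}(2|U|-(n+1))\ge -N(H',w)$, and you leave it open. The alternatives you sketch are only outlined: the BFS-leaf deletion needs the unproven comparison $2S_v-(n+1)N_v\ge N(H-v,r)$, and the minimal-counterexample argument modelled on Lemma~\ref{lem:descent} is not carried out. For comparison, the paper does not prove this from scratch either. It cites Vince's result that connected sets through a fixed vertex have average order at least $(\ord{H}+1)/2$. Equivalently, the statement follows from Haslegrave's bound $E(G,B)\ge0$ (Lemma~\ref{lem:boundaryhalf}) applied to $(H-r,\Gamma_H(r))$ via Lemma~\ref{lem:rootdeletion}.

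Your contraction decomposition can be completed. The missing idea is to apply the induction hypothesis to $\mathcal R$ itself rather than pairing it off against the first family.

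\begin{enumerate}
 \item The sets in $\mathcal R$ are exactly the connected sets through $r$ in the component $H_1$ of $H-v$ containing $r$. With $n_1=\ord{H_1}\le n-1$, induction gives $\sum_{U\in\mathcal R}(2|U|-(n+1))=F(H_1,r)-(n-n_1)|\mathcal R|\ge-(n-n_1)|\mathcal R|$.
 \item The set $X=V(H)\setminus V(H_1)$ induces a connected subgraph containing $v$, since every other component of $H-v$ is attached to $v$. So it has a chain $\{v\}=T_1\subsetneq\cdots\subsetneq T_{n-n_1}=X$ of connected sets.
 \item The map $(U,j)\mapsto U\cup T_j$ injects $\mathcal R\times\{1,\dots,n-n_1\}$ into the connected sets containing both $r$ and $v$. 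The images are connected because $rv\in E(H)$. Hence $N(H',w)\ge(n-n_1)|\mathcal R|$.
 \item Therefore $F(H,r)=F(H',w)+F(H_1,r)+N(H',w)-(n-n_1)|\mathcal R|\ge0$, and this works for \emph{any} neighbour $v$ of $r$.
\end{enumerate}

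When $v$ is not a cut vertex, this is just your injection $U\mapsto U\cup\{v\}$ combined with $F(H-v,r)\ge0$; you had both ingredients but did not put them together. When $v$ is a cut vertex (for instance when $r$ is a leaf), the chain recovers exactly the factor $n-n_1$ that a single injection loses. This gives a self-contained elementary proof in place of the paper's citation.
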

\begin{proof}
Vince~\cite[Corollary 3.2]{VinceLower2022} proved that
$
 \frac{S(H,r)}{N(H,r)}\ge\frac{\ord{H}+1}{2}\iff F(H,r)\ge 0.\qedhere
$
\end{proof}

For edge contraction, we follow Bondy and Murty~\cite[Sections 2.2 and 2.3]{BondyMurty2008}. For an edge $uv\in E(G)$,
let $G/uv$ denote the graph obtained by deleting $uv$ and identifying its ends, keeping any parallel edges that arise.
Let $G_{uv}$ denote the underlying simple graph of $G/uv$.
If $uv$ is a cut edge, no parallel edges arise in $G/uv$, so it equals $G_{uv}$.

The next identity gives the change in $F$ under edge contraction.

\begin{lemma}\label{lem:contraction}
If $uv\in E(G)$ and the graphs $G-u,G-v$ are both connected, then
\begin{equation*}
 F(G)-F(G_{uv})=N(G,\{u,v\})-\overline N(G,\{u,v\})+F(G-v,u)+F(G-u,v).
\end{equation*}
\end{lemma}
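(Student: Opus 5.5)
The identity will follow from splitting $\CC(G)$ according to how each set meets $\{u,v\}$ and matching each part with connected sets of $G_{uv}$ or of $G-v$, $G-u$. Write $n=v(G)$ and let $w$ be the vertex of $G_{uv}$ obtained by identifying $u$ and $v$, so $v(G_{uv})=n-1$. Partition $\CC(G)$ into four classes:
\begin{enumerate}
 \item[(a)] sets containing both $u$ and $v$;
 \item[(b)] sets containing neither $u$ nor $v$;
 \item[(c)] sets containing $u$ but not $v$;
 \item[(d)] sets containing $v$ but not $u$.
\end{enumerate}
Split $\CC(G_{uv})$ likewise into the sets that contain $w$ and those that do not.

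First I will set up the correspondences.
\begin{itemize}
 \item Since $G_{uv}-w=G-\{u,v\}$, the sets of $\CC(G_{uv})$ avoiding $w$ are exactly the sets in class (b), with the same sizes.
 \item For $W\subseteq V(G_{uv})$ with $w\in W$, the set $W$ is connected in $G_{uv}$ if and only if $(W\setminus\{w\})\cup\{u,v\}$ is connected in $G$. This holds because $uv$ is an edge and adjacency to $w$ means adjacency to $u$ or $v$; parallel edges and loops do not affect connectivity.
 \item Hence $W\mapsto(W\setminus\{w\})\cup\{u,v\}$ is a bijection from the sets containing $w$ onto class (a), and it raises the size by exactly one.
 \item Class (c) is precisely $\{U\in\CC(G-v):u\in U\}$, and class (d) is $\{U\in\CC(G-u):v\in U\}$.
\end{itemize}
The hypothesis that $G-u$ and $G-v$ are connected makes these rooted quantities the ones defined for connected rooted graphs. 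It is not otherwise needed for the counting.

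Next I will compare the contributions to $F$ from \eqref{eq:F}, with $2|U|-n$ for sets of $G$ and $2|W|-(n-1)$ for sets of $G_{uv}$.
\begin{itemize}
 \item Class (a): a set $U$ corresponds to $W$ with $|W|=|U|-1$. The difference of contributions is $(2|U|-n)-(2|U|-2-n+1)=1$, so class (a) contributes $N(G,\{u,v\})$ in total.
 \item Class (b): a common set $U$ gives $(2|U|-n)-(2|U|-n+1)=-1$, so class (b) contributes $-\overline N(G,\{u,v\})$.
 \item Class (c): these sets appear only in $G$, and they contribute $\sum(2|U|-n)=2S(G-v,u)-nN(G-v,u)$. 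Since $v(G-v)+1=n$, this equals $F(G-v,u)$.
 \item Class (d): symmetrically, it contributes $F(G-u,v)$.
\end{itemize}
Summing the four contributions gives the stated identity. The only point requiring care is the bijection for sets containing $w$, and that is a routine check.
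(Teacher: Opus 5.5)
Your proposal is correct and essentially reproduces the paper's proof: the same four-way partition of $\CC(G)$ by intersection with $\{u,v\}$, the same bijections onto the sets of $G_{uv}$ containing or avoiding $w$, and the same per-class contributions $+1$, $-1$, $F(G-v,u)$ and $F(G-u,v)$. Your remark that the connectivity of $G-u$ and $G-v$ is not used for the identity itself is also accurate; in the paper that hypothesis matters only later, when Corollary~\ref{cor:roothalf} is applied to the rooted terms.
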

\begin{proof}
Let $w$ be the contracted vertex and put $n=\ord{G}$.
In the diagram below, it shows the partitions of $\CC(G)$ and $\CC(G_{uv})$ respectively.
The bijection $\varphi\colon\CC_{uv}\to\CC_w$ sends $U$ to $(U\setminus\{u,v\})\cup\{w\}$;
its inverse replaces $w$ with $u,v$, preserving connectivity because $uv\in E(G)$.
The bijection $\psi\colon\CC_{\bar u\bar v}\to\CC_{\bar w}$ is the identity, since $G-\{u,v\}=G_{uv}-w$.
Thus $\varphi$ decreases the order of a set by one, and $\psi$ preserves it.
\begin{center}
\begin{tikzpicture}[font=\small,>=Latex,every node/.style={inner sep=0pt}]
 \node[anchor=west] at (0,0) {$\CC_{uv}:\quad \{u,v\}\subseteq U$};
 \node[anchor=west] at (0,-0.85) {$\CC_{\bar u\bar v}:\quad U\cap\{u,v\}=\varnothing$};
 \node[anchor=west] at (0,-1.70) {$\CC_{u\bar v}:\quad u\in U,\ v\notin U$};
 \node[anchor=west] at (0,-2.55) {$\CC_{\bar u v}:\quad u\notin U,\ v\in U$};
 \node[anchor=west] at (6.4,0) {$\CC_w:\quad w\in U$};
 \node[anchor=west] at (6.4,-0.85) {$\CC_{\bar w}:\quad w\notin U$};
 \draw[<->,line width=0.45pt] (4.5,0) -- node[above=2pt] {$\varphi$} (6.1,0);
 \draw[<->,line width=0.45pt] (4.5,-0.85) -- node[above=2pt] {$\psi$} (6.1,-0.85);
 \draw[paper brace,decoration={brace,amplitude=4pt}]
  (-0.22,-2.88) -- (-0.22,0.33);
 \node[anchor=east] at (-0.55,-1.275) {$\CC(G)$};
 \draw[paper brace,decoration={brace,amplitude=4pt}]
  (8.8,0.33) -- (8.8,-1.18);
 \node[anchor=west] at (9.15,-0.425) {$\CC(G_{uv})$};
\end{tikzpicture}
\end{center}

Since $\ord{G-v}+1=\ord{G-u}+1=n$, the left two classes give
\begin{align*}
 \sum_{U\in\CC_{u\bar v}}(2|U|-n)
 &=2S(G-v,u)-nN(G-v,u)=F(G-v,u),\\
 \sum_{U\in\CC_{\bar u v}}(2|U|-n)
 &=2S(G-u,v)-nN(G-u,v)=F(G-u,v).
\end{align*}
The bijections $\varphi$ and $\psi$, together with~\eqref{eq:F}, give
\begin{align*}
 F(G)-F(G_{uv})
 &=\sum_{U\in\CC_{uv}}
   \paperunderbrace{\bigl((2|U|-n)-(2(|U|-1)-(n-1))\bigr)}_{1}\\
 &\quad+\sum_{U\in\CC_{\bar u\bar v}}
   \paperunderbrace{\bigl((2|U|-n)-(2|U|-(n-1))\bigr)}_{-1}\\
 &\quad+F(G-v,u)+F(G-u,v)\\
 &=|\CC_{uv}|-|\CC_{\bar u\bar v}|+F(G-v,u)+F(G-u,v)\\
 &=N(G,\{u,v\})-\overline N(G,\{u,v\})+F(G-v,u)+F(G-u,v).\qedhere
\end{align*}
\end{proof}

\begin{proof}[Proof of Theorem~\ref{thm:biconnected}]
We proceed by induction on $n$. If $n=3$, then $G=K_3$ and the conclusion follows from~\eqref{eq:complete}.
Suppose that $n\ge4$, and choose the edge $uv$ given by Lemma~\ref{lem:descent}.

Let $w$ be the contracted vertex of $G_{uv}$.
Deleting $w$ from $G_{uv}$ gives the connected graph $G-\{u,v\}$.
Deleting any other vertex $w'$ gives $G_{uv}-w'=(G-w')_{uv}$,
which is again connected. Thus $G_{uv}$ is a 2-connected simple graph of order $n-1$.
By the induction hypothesis, Lemma~\ref{lem:contraction}, and Corollary~\ref{cor:roothalf},
\[
 F(G)\ge F(G_{uv})+1\ge(n-1)+1=n.
\]
\end{proof}

\section{Lower bounds for graphs with \texorpdfstring{$\delta(G)\ge3$}{minimum degree at least three}}\label{sec:mainproof}

We prove Theorems~\ref{thm:main} and~\ref{thm:cutaverage} using bounds for sets containing or avoiding a root, together with counting identities at cut edges and cut vertices.

\subsection{Lower bounds for \texorpdfstring{$N(H,r)$}{N(H,r)}}\label{sec:rootcount}

Vince~\cite[Lemma 6.2 and Corollary 6.3]{VinceLower2022} gave linear lower bounds for trees
and for graphs whose root lies in a 2-connected subgraph, with some exceptional graphs.
The stronger linear bound below is also used in Subsection~\ref{sec:cutquantitative} to control
the ratio $F(G)/N(G)$.

We first consider graphs containing a cycle.

\begin{lemma}\label{lem:cyclicroot}
Let $(H,r)$ be a connected rooted graph of order $h$ containing a cycle. Then $N(H,r)\ge h+1$.
\end{lemma}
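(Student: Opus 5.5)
We must show that if $(H,r)$ is a connected rooted graph of order $h$ containing a cycle, then $N(H,r)\ge h+1$. The natural way to count connected sets containing $r$ is through paths from $r$: for every vertex $x$, some path from $r$ to $x$ gives a connected set containing $r$. The plan is to exhibit $h$ distinct such sets indexed by vertices, then find one extra set using the cycle.

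\emph{Step 1 (the $h$ tree sets).} Fix a spanning tree $T$ of $H$ rooted at $r$ chosen as a BFS tree. For each $x\in V(H)$ let $P_x$ be the vertex set of the $r$--$x$ path in $T$. Each $P_x$ is connected in $H$ and contains $r$. The sets are pairwise distinct, since $x$ is the unique vertex of $P_x$ of maximum depth in $T$. This yields $h$ sets, including $\{r\}=P_r$ and $P_x$ for $x\ne r$.

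\emph{Step 2 (an extra set from a non-tree edge).} Since $H$ contains a cycle, there is an edge $ab\in E(H)\setminus E(T)$. Take $U=P_a\cup P_b$. It is connected and contains $r$. We claim $U$ is not of the form $P_x$. If $U=P_x$, then $U$ would be a path from $r$ in $T$, so $P_a$ and $P_b$ would be nested, say $P_a\subseteq P_b$, meaning $a$ is a $T$-ancestor of $b$.

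\emph{Step 3 (the ancestor case, expected to be the main obstacle).} The failure case is when $a$ is a $T$-ancestor of $b$ and is not the parent of $b$, because $ab\notin E(T)$. With a BFS tree this cannot happen, since adjacent vertices have depths differing by at most $1$. An ancestor that is not the parent has depth smaller by at least $2$.

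So with $T$ a BFS tree, Step 2 applies directly and $U=P_a\cup P_b$ is the desired $(h+1)$-st set. In fact $U$ has two vertices of maximum $T$-depth, or branches, so it is not a path from $r$. Alternatively, one could use a DFS tree, where every non-tree edge joins an ancestor and a descendant. Then the extra set would be $P_b\setminus\{c\}$, with $c$ the child of $a$ on the path to $b$: it is connected via the edge $ab$ but is not a root path, since it skips $c$.

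Either choice of tree settles the lemma, and the only care needed is to verify distinctness from all the $P_x$. Distinctness holds because each $P_x$ is closed under taking $T$-ancestors, while $U$ either contains two vertices neither of which is an ancestor of the other (BFS version) or omits an ancestor $c$ of $b\in U$ (DFS version).
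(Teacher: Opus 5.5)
Your proof is correct, but it takes a somewhat different route from the paper's.

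The paper grows a nested chain of connected sets along an arbitrary spanning tree $T$, one set of each order $1,\ldots,h$. If at some step there are two possible vertices to add, the alternative choice already gives an extra set, without using the cycle at all. Otherwise $T$ is a path $r=v_1,\ldots,v_h$, and a chord $v_iv_j$ with $j\ge i+2$ gives the extra set $\{v_1,\ldots,v_i,v_j\}$.

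You instead index the $h$ sets by their endpoints: the root paths $P_x$, distinguished by their unique deepest vertex. You then choose the tree so that non-tree edges have a controlled shape. In a BFS tree every non-tree edge joins two vertices that are incomparable in the ancestor order, so $P_a\cup P_b$ is not a chain. In a DFS tree every non-tree edge is a back edge, so $P_b\setminus\{c\}$ is connected but not ancestor-closed.

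What each route buys: your argument avoids the case split at the price of fixing a special spanning tree. The paper's argument works with any spanning tree and shows that the cycle is only needed when that tree is a path starting at $r$. Both use simplicity of $H$ only to ensure that the non-tree edge is not parallel to a tree edge.

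One small point of wording in your closing sentence. In the BFS version, the property of $P_x$ that $P_a\cup P_b$ violates is that $P_x$ is totally ordered by ancestry, not that it is ancestor-closed; the union is ancestor-closed as well. Also, $U$ need not have two vertices of maximum depth when the depths of $a$ and $b$ differ by one. Your Step 2 argument, which derives nestedness of $P_a$ and $P_b$ from $U=P_x$, is the correct justification.
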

\begin{proof}
Starting from $r$ in a spanning tree $T$ of $H$, add one adjacent vertex at a time
to obtain $h$ connected sets of orders $1,\ldots,h$.
If any step allows two choices, the other choice gives an additional set of the same order implying that $N(H,r)\ge h+1$. Otherwise, $T$ is a path $r=v_1,\ldots,v_h$, and the sets obtained are $\{v_1,\ldots,v_i\}$ for $1\le i\le h$.
Since $H$ contains a cycle, it has an edge $v_iv_j\notin E(T)$ with $j\ge i+2$.
Then $\{v_1,\ldots,v_i,v_j\}$ is connected and contains $r$,
but differs from every set already counted. 
\end{proof}

\begin{lemma}\label{lem:biconnectedrootcount}
If $B$ is a 2-connected simple graph of order $b$, then, for every $r\in V(B)$,
$N(B,r)\ge\frac{b(b-1)}2+1$.
\end{lemma}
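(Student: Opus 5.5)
Since $B$ is 2-connected, $B-r$ is connected, and a natural plan is to count connected sets containing $r$ by adding a second vertex and growing from there. For each unordered pair $\{x,y\}\subseteq V(B)\setminus\{r\}$, or for each vertex $x\ne r$, we want distinct connected sets containing $r$. The target is $\binom b2+1=\binom{b-1}{2}+(b-1)+1$. So it suffices to find the following injectively: the set $\{r\}$; for each $x\neq r$ a connected set $U_x\ni r$ associated with $x$; and for each pair $\{x,y\}\subseteq V(B)\setminus\{r\}$ a further connected set $U_{xy}\ni r$. All of these must be distinct.

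I would prove this by induction on $b$ using an ear decomposition, which fits better with the paper's tools. For $b=3$, $B=K_3$ and $N(B,r)=4=\binom32+1$. For $b\ge4$, take an ear decomposition of $B$. If the last ear has internal vertices, remove them to get a 2-connected $B'$ of order $b'<b$ containing $r$; this is possible by choosing the ear decomposition to start from a cycle through $r$. Alternatively, if $B$ is not a cycle, use Lemma~\ref{lem:descent}-style reasoning, or simply delete an edge or a vertex preserving 2-connectivity. The simplest route is a vertex $v\ne r$ with $B-v$ 2-connected, or, failing that, an ear. Suppose the last ear is a path $P=a\,p_1\cdots p_t\,b_0$ with $a,b_0\in V(B')$ and $t=b-b'$ new internal vertices. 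Sets containing $r$ that avoid all $p_i$ number $N(B',r)\ge\binom{b'}2+1$ by induction. It then remains to produce at least $\binom b2-\binom{b'}2=t b'+\binom t2$ new connected sets containing $r$ and meeting $P$'s interior.

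For the new sets, fix a path $Q$ in $B'$ from $r$ to $a$. A connected set containing $r$ and an interior vertex is obtained by taking a connected $W\ni r$ in $B'$ containing $a$, and appending $p_1\cdots p_j$ for $1\le j\le t$; symmetrically from $b_0$. Contiguous segments $p_i\cdots p_j$ can also be attached via both ends when $W\ni a,b_0$. Concretely, the interior segments attached form either a prefix $p_1..p_j$, a suffix $p_i..p_t$, or a union of prefix and suffix, or all of $P$; these give about $\binom{t+1}{2}+t$ patterns. Then I need at least $b'$ distinct sets $W$ containing $r,a$ (and $r,b_0$), which should follow from Lemma~\ref{lem:cyclicroot} applied in $B'$ rooted at the contracted pair. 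The accounting $t b'+\binom t2$ requires that for each $j$ there are about $b'$ choices of $W$, plus the $\binom t2$ combinatorial choices of two-sided segments with a fixed $W\supseteq\{r,a,b_0\}$ (e.g.\ $V(B')$ itself). Sets $W\ni r,a$ in $B'$: since $B'$ is 2-connected of order $b'$, connected sets containing both $r$ and $a$ number at least $b'-1$ (grow $Q$ to everything one vertex at a time, with branching) plus extra from the cycle; this gives the needed count.

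The main obstacle is this count of sets in $B'$ containing both $r$ and $a$: it needs about $b'$ of them, and the case $a=r$ or short $Q$ needs care. I would handle it by considering $B'$ with $r,a$ joined and applying Lemma~\ref{lem:cyclicroot}-type growth along a spanning tree containing $Q$, giving $b'-|Q|+1$ sets. Deficits would be compensated by the suffix-based sets through $b_0$ and by the two-sided sets. A cleaner fallback is induction that removes a single vertex $v\neq r$ keeping 2-connectivity whenever possible; this exists unless $B$ is a cycle. In that case, the sets containing $r$ and not $v$ number at least $\binom{b-1}2+1$, and those containing both $r$ and $v$ must be at least $b-1$. That last count follows from growing a spanning tree of $B-v$ from $r$ while keeping connectivity to $v$, which is available since $v$ has at least two neighbors. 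For cycles, the direct count $N(C_b,r)=\binom b2+1$ gives equality.
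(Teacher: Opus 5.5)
There is a genuine gap. Your ``cleaner fallback'' relies on the claim that a 2-connected graph that is not a cycle always has a vertex $v\ne r$ with $B-v$ 2-connected. This is false. Take vertices $x,y,a_1,a_2,b_1,b_2$ and edges $xy$, $xa_1$, $a_1a_2$, $a_2y$, $xb_1$, $b_1b_2$, $b_2y$. This graph is 2-connected and not a cycle, yet deleting any vertex leaves a vertex of degree $1$: deleting $x$ or $a_2$ leaves $a_1$ with one neighbor, deleting $y$ or $a_1$ does the same to $a_2$, and symmetrically for $b_1,b_2$.

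So single-vertex induction with cycles as the only base case does not cover all 2-connected graphs. Its inductive step is fine when such a $v$ exists, since $V(B)$ and $V(B)\setminus\{w\}$ for $w\notin\{r,v\}$ give the needed $b-1$ sets containing $r$ and $v$. But graphs like this one send you back to ears with several internal vertices, and there your accounting is not carried out. You need $tb'+\binom t2$ new sets, but you only estimate ``about'' that many. Your proposed source of sets $W\ni r,a$ (growth along a spanning tree containing $Q$) yields only $b'-|Q|+1$ of them, and the shortfall is left to an unquantified ``compensation''.

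The ear route can be completed as follows.
\begin{enumerate}
 \item Start the ear decomposition at a cycle through $r$. Let $P=a\,p_1\cdots p_t\,b_0$ be the last ear with $t\ge1$ internal vertices; if there is none, $B$ has a spanning cycle and $N(B,r)\ge N(C_b,r)=\binom b2+1$.
 \item Put $V'=V(B)\setminus\{p_1,\dots,p_t\}$. Then $B[V']$ is 2-connected of order $b'=b-t\ge3$ and contains $r$, so induction handles the sets avoiding the $p_i$.
 \item The new sets are of three kinds:
 \begin{itemize}
  \item $V'\cup J$, where $J\ne\varnothing$ and $\{p_1,\dots,p_t\}\setminus J$ is a possibly empty block of consecutive $p_i$; there are $\binom{t+1}2$ of these.
  \item $(V'\setminus\{w\})\cup\{p_1,\dots,p_j\}$ with $w\in V'\setminus\{r,a\}$ and $1\le j\le t$.
  \item If $a\ne r$, $(V'\setminus\{a\})\cup\{p_i,\dots,p_t\}$ with $1\le i\le t$.
 \end{itemize}
 \item These sets are connected by 2-connectivity of $B[V']$. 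They are pairwise distinct, since they contain all of $V'$, miss a single $w\ne a$, or miss $a$. They number exactly $tb'+\binom t2$ whether or not $a=r$.
\end{enumerate}

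The paper instead avoids induction entirely. It takes an $st$-numbering $v_1=r,\dots,v_b=u$ with $u$ a neighbor of $r$. The $b$ prefixes, together with the $\binom{b-1}2$ unions of a prefix of order $i\le b-2$ with a nonempty suffix starting at or after $v_{i+2}$, are distinct connected sets containing $r$. That argument is shorter, while the repaired ear argument needs only Whitney's ear decomposition rather than the $st$-numbering theorem.
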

\begin{proof}
Choose a neighbor $u$ of $r$.
By the $st$-numbering theorem (see Even and Tarjan~\cite{EvenTarjan1976}),
$B$ has an ordering $v_1=r,\ldots,v_b=u$ such that every vertex except the first
has an earlier neighbor and every vertex except the last has a later neighbor;
see Figure~\ref{fig:st-numbering}.

\begin{figure}[htbp]
\centering
\begin{tikzpicture}[font=\small]
  \draw[paper outline] (0,-.65) rectangle (3.8,.4);
  \draw[paper outline] (5.3,-.65) rectangle (9.1,.4);
  \foreach \x in {.6,1.3,2.15,6.95,7.8,8.5}
    \node[paper vertex,fill=white,minimum size=5pt] at (\x,0) {};
  \node at (3.05,0) {$\cdots$};
  \node at (4.55,0) {$\cdots$};
  \node at (6.05,0) {$\cdots$};
  \node[anchor=north,inner sep=1pt] at (.6,-.14) {$r=v_1$};
  \node[anchor=north,inner sep=1pt] at (8.5,-.14) {$v_b=u$};
  \draw[paper brace,decoration={brace,amplitude=10pt}]
    (.6,.65)--(6,.65)
    node[midway,above=9pt] {prefix};
  \draw[paper brace,decoration={brace,mirror,amplitude=10pt}]
    (3.3,-.9)--(8.5,-.9)
    node[midway,below=9pt] {suffix};
\end{tikzpicture}
\caption{The $st$-numbering of $B$}
\label{fig:st-numbering}
\end{figure}

By the $st$-numbering property, every nonempty prefix is connected and contains $r$,
giving $b$ such sets. Every nonempty suffix is also connected.
Since $v_1$ and $v_b$ are adjacent, the union of any nonempty prefix and nonempty suffix
is connected and contains $r$. we only consider disjoint nonempty prefixes and suffixes whose union is not $V(B)$.
For a prefix of order $i$, where $1\le i\le b-2$, the suffix can start at any of
$v_{i+2},\ldots,v_b$, giving $b-i-1$ choices.
These unions are distinct because they omit different intervals, so their number is
$(b-2)+(b-3)+\cdots+2+1=\frac{(b-2)(b-1)}2$.
None is a prefix already counted, since each contains $v_b$ but is not $V(B)$.
Therefore
\begin{equation*}
 N(B,r)\ge b+\frac{(b-2)(b-1)}2=\frac{b(b-1)}2+1.\qedhere
\end{equation*}
\end{proof}

\begin{lemma}\label{lem:rootcount}
Let $(H,r)$ be a connected simple rooted graph of order $h$ in which every nonroot vertex has degree at least $3$.
\begin{enumerate}
 \item If $d_H(r)\ge2$, then
 \begin{equation*}
  N(H,r)\ge4h-8\ge2h.
 \end{equation*}
 Moreover, $N(H,r)=2h$ if and only if $H\cong K_4$.
 \item If $d_H(r)\ge0$, then
 \begin{equation*}
  N(H,r)\ge2h-1.
 \end{equation*}
\end{enumerate}
\end{lemma}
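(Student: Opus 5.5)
The plan is strong induction on $h$, splitting according to the block structure of $H$ and proving (i) and (ii) together. First the small and base cases. If $h=1$, then $N(H,r)=1=2h-1$, which settles (ii). Any nonroot vertex has degree at least $3$, so as soon as $H$ has a nonroot vertex we get $h\ge4$. In case (i), $d_H(r)\ge2$ forces a nonroot vertex, so $h\ge4$ there, and then $4h-8\ge2h$ with equality only at $h=4$. For $h=4$ every nonroot vertex is adjacent to all three others, so $H=K_4$ and $N(K_4,r)=8=2h$. This gives both the bound and the equality case at $h=4$. Hence for the "moreover" part it suffices to prove the strict bound $N(H,r)>2h$ whenever $h\ge5$, which follows from $4h-8>2h$.

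\emph{2-connected case.} If $H$ is 2-connected, Lemma~\ref{lem:biconnectedrootcount} gives $N(H,r)\ge h(h-1)/2+1$. This is at least $4h-8$ exactly when $(h-3)(h-6)\ge0$, so it suffices for $h\ge6$. For $h=5$ we need $N(H,r)\ge12$, whereas the lemma only gives $11$. I would handle $h=5$ by a direct check: four vertices of degree at least $3$ on five vertices leave only a few graphs, essentially $K_5$ minus a subset of the edges at $r$, and in each of them one more set containing $r$ is easy to exhibit. For (ii) when $d_H(r)\le1$: if $d_H(r)=0$, then $h=1$. If $d_H(r)=1$, let $s$ be the neighbour of $r$. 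Then $N(H,r)=1+N(H-r,s)$, where $H-r$ rooted at $s$ has $d(s)\ge2$, so (i) applied to $H-r$ gives $1+4(h-1)-8\ge2h-1$ once $h\ge5$.

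\emph{$r$ a cut vertex.} Let $C_1,\dots,C_t$ be the components of $H-r$ and $H_i=H[C_i\cup\{r\}]$. Then $N(H,r)=\prod_i N(H_i,r)$. Each $H_i$ satisfies the hypothesis, and since every vertex of $C_i$ has degree at least $3$ we get $h_i:=v(H_i)\ge5$. Applying (ii) to each factor (or (i) where $d_{H_i}(r)\ge2$) and using $(2a-1)(2b-1)\ge 4(a+b-1)-8$ for $a,b\ge5$ gives the bound, with strict inequality.

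\emph{Cut vertex away from the root.} This is the main obstacle: deleting a pendant part can destroy the degree condition at the cut vertex. Choose a cut vertex $c\ne r$ and a component $D$ of $H-c$ not containing $r$, with $D$ chosen minimal (a leaf block side). Counting by whether a set meets $D$ gives
\[
N(H,r)=N(H-D,r)+N(H-D,\{r,c\})\bigl(N(H[D\cup\{c\}],c)-1\bigr).
\]
Rather than deleting $D$, I would replace $D$ by three new vertices forming a $K_4$ together with $c$. The resulting graph $H'$ still has all nonroot degrees at least $3$, keeps $d(r)$, and has order $h-|D|+3<h$, since $|D|\ge4$ (the vertices of $D$ have degree at least $3$, and at most one neighbour of each lies outside $D$). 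The same identity holds for $H'$ with $N(K_4,c)-1=7$, so
\[
N(H,r)-N(H',r)=N(H-D,\{r,c\})\bigl(N(H[D\cup\{c\}],c)-8\bigr).
\]
Apply induction to $(H[D\cup\{c\}],c)$: part (i) if $c$ has at least two neighbours in $D$, which gives at least $4(|D|+1)-8$, otherwise part (ii) via the single-neighbour reduction. Then the right-hand side is at least $N(H-D,\{r,c\})\cdot 4(|D|-3)$. It remains to check $N(H-D,\{r,c\})\ge1$, which holds since $H-D$ is connected; this recovers exactly the lost $4(|D|-3)$ for (i). For (ii) a factor $2$ suffices. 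Combining with the induction hypothesis for $H'$ gives the claimed bounds. Equality analysis shows $h>4$ is never tight for $2h$, because $H'$ already has order at least $5$ or the difference is positive.
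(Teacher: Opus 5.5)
Your case split (2-connected, $r$ a cut vertex, some cut vertex $c\ne r$) is a genuinely different organisation from the paper's split by root degree, but the third case has a genuine gap. The claim $|D|\ge4$ is false. Every vertex of $D$ has all of its at least three neighbours in $D\cup\{c\}$, which only gives $|D|\ge3$, and $|D|=3$ occurs exactly when $H[D\cup\{c\}]\cong K_4$.

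In that situation your replacement is the identity: $H'\cong H$, $v(H')=h$, and $N(H[D\cup\{c\}],c)-8=0$, so there is no smaller graph to which the induction hypothesis applies. Choosing $D$ minimal does not help. Take $H$ to be a $K_5$ containing $r$ with a $K_4$ glued at another vertex $c$. Then $H$ is not 2-connected, $r$ is not a cut vertex, and the only admissible pair $(c,D)$ has $|D|=3$. No smaller gadget hanging only at $c$ exists either, since new vertices adjacent only to each other and to $c$ need at least three of them to reach degree $3$.

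There is also a smaller slip in the single-neighbour subcase. There you only get $N(H[D\cup\{c\}],c)-8\ge4|D|-15$, not $4(|D|-3)$. That subcase should instead be excluded by minimality of $D$: the unique neighbour $s$ of $c$ in $D$ would be a cut vertex with a strictly smaller pendant side.

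The missing idea is to handle all pendant pieces at once through the block containing $r$, as the paper does in its Case~3, so that they never need to shrink. Suppose $r$ is not a cut vertex and $d_H(r)\ge2$. Then $r$ lies in a unique block $B$ of order $b\ge3$, which contains all neighbours of $r$. Let $O_i$ be the union of the components of $H-V(B)$ attached at $r_i\in V(B)\setminus\{r\}$, and put $m_i=|O_i|$, $m=\sum_i m_i$, and $q_i=N(H[O_i\cup\{r_i\}],r_i)$. Part (ii), applied to a strictly smaller graph, gives $q_i\ge2m_i+1$; a pendant $K_4$ causes no trouble here. Then
\[
 N(H,r)\ge\sum_{r\in U\in\CC(B)}\prod_{i:\,r_i\in U}q_i\ge N(B,r)+\sum_i(q_i-1)N(B,\{r,r_i\}).
\]
Together with $N(B,\{r,r_i\})\ge b-1$ and Lemma~\ref{lem:biconnectedrootcount}, this gives $N(H,r)-(4h-8)\ge(b-3)\bigl(\tfrac{b-6}{2}+2m\bigr)$. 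This is nonnegative in your third case, because some $O_i$ is nonempty there and hence $m\ge3$.

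With this substitution your proof closes. It then differs from the paper mainly in that it never needs the paper's direct count $2^d+2^{d-1}(h-d-1)$ for $d_H(r)\ge3$.

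Two harmless slips elsewhere:
\begin{enumerate}
 \item In the root-cut-vertex case a side may be $K_4$, so $h_i\ge4$ rather than $5$. The bound still holds, since $(2h_1-1)(2h_2-1)-\bigl(4(h_1+h_2-1)-8\bigr)=(2h_1-3)(2h_2-3)+4>0$.
 \item The order-$5$ 2-connected graphs include, for example, the wheel with hub $r$, which is not $K_5$ minus edges at $r$. A finite check still gives $N(H,r)\ge12$.
\end{enumerate}
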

\begin{proof}
We proceed by induction on $h$, proving both assertions simultaneously.
If $h=1$, only the second assertion applies, and it holds.
For $2\le h\le3$, no graph satisfies the degree condition on nonroot vertices.
If $h=4$, clearly $H\cong K_4$ and $N(H,r)=8$.
For $h\ge5$, assume both assertions hold for all smaller orders. We distinguish four cases.

\textbf{Case 1: the root degree is $d:=d_H(r)\ge3$.}
The root together with any subset of its neighborhood gives $2^d$ connected sets containing the root; 

For each nonneighbor $u\notin \Gamma_H(r)\cup\{r\}$, fix a shortest $r$--$u$ path $P_u$, and let $v_u$ be its first
vertex after the root; see Figure~\ref{fig:root-neighbor-path}.
Taking the union of $V(P_u)$ with any subset of $\Gamma_H(r)\setminus\{v_u\}$
gives $2^{d-1}$ connected sets containing the root.

\begin{figure}[htbp]
\centering
\begin{tikzpicture}[font=\small]
  \coordinate (root) at (3.6,0);
  \coordinate (first) at (1.1,.85);
  \coordinate (outside) at (-3.6,.85);
  \draw[paper region] (0,0) ellipse (2.2cm and 1.45cm);
  \node at (-.6,0) {$\Gamma_H(r)$};
  \draw[paper edge] (root)--(1.1,0);
  \draw[paper edge] (root)--(1.1,-.85);
  \draw[paper edge,line width=.9pt] (root)--(first);
  \draw[paper edge,dashed,line width=.9pt] (first)
    .. controls (1.15,3) and (-1.5,3) .. (outside)
    node[pos=.35,above=4pt] {$P_u$};
  \node[paper root,label={right:$r$}] at (root) {};
  \node[paper vertex,minimum size=5pt,label={above left:$v_u$}] at (first) {};
  \node[paper vertex,fill=white,minimum size=5pt,label={below left:$u$}] at (outside) {};
  \foreach \y in {0,-.85}
    \node[paper vertex,fill=white,minimum size=5pt] at (1.1,\y) {};
\end{tikzpicture}
\caption{A shortest $r$--$u$ path $P_u$ through $v_u\in\Gamma_H(r)$.}
\label{fig:root-neighbor-path}
\end{figure}

By the choice of $P_u$, we have $V(P_u)\cap\Gamma_H(r)=\{v_u\}$, so different choices
of the subset of neighbors give different sets for each fixed $u$.
Since $P_u$ is shortest, $u$ is the unique vertex farthest from $r$ in each resulting set.
Thus the $h-d-1$ choices of $u$ give $2^{d-1}(h-d-1)$ distinct connected sets containing $r$.
Each contains a vertex outside $\Gamma_H(r)\cup\{r\}$, so none is among the first $2^d$ sets. Hence
\begin{equation*}
 N(H,r)\ge2^d+2^{d-1}(h-d-1)\ge4(1+(d-2))+4(h-d-1)=4h-8.
\end{equation*}
The inequality holds with equality for $d=3$ and is strict for $d>3$.

\textbf{Case 2: the root has degree two and $r$ is a cut vertex.}
The graph $H-r$ has exactly two components, of orders $m_1,m_2$, so $h=m_1+m_2+1$.
Adding the root back to either side gives a graph in which the root has degree one and
every other vertex has degree at least three. Applying the induction hypothesis for \textnormal{(ii)}
to each side and choosing connected sets containing the root independently gives
$N(H,r)\ge(2m_1+1)(2m_2+1)$.
The degree condition ensures that $m_i\ge3$, so
\[
 (2m_1+1)(2m_2+1)-(4h-8)=(2m_1-1)(2m_2-1)+4>0.
\]

\textbf{Case 3: the root has degree two and $r$ is not a cut vertex.}
The two neighbors of the root are connected in $H-r$, so $r$ lies in a 2-connected block $B$.
Write $b=v(B)\ge3$ and $m=h-b$. Both neighbors of $r$ lie in $B$, so $r$ has no neighbor outside $B$.
Every component of $H-V(B)$ has a neighbor in $B$ because $H$ is connected.
It cannot have two distinct neighbors in $B$, since a path between them through that component
would extend $B$ to a larger 2-connected subgraph, contradicting the maximality of the block.
Label the vertices of $B$ other than $r$ as $r_1,\ldots,r_{b-1}$.
For $1\le i\le b-1$, let $O_i$ be the union of the vertex sets of all components of $H-V(B)$
whose unique neighbor in $B$ is $r_i$.
These sets are pairwise disjoint and their union is $V(H)\setminus V(B)$.
Set $m_i=|O_i|$, $H_i=H[O_i\cup\{r_i\}]$, and $q_i=N(H_i,r_i)$.
We regard $H_i$ as rooted at $r_i$; Figure~\ref{fig:block-parts} shows this decomposition.
No edges join distinct sets $O_i$, and every vertex of $O_i$ keeps all its neighbors in $H_i$.
Since $\ord{H_i}=m_i+1<h$, the induction hypothesis for the second assertion gives
$q_i\ge2m_i+1$, including when $m_i=0$. Then we have

\begin{figure}[htbp]
\centering
\begin{tikzpicture}[font=\small]
  \draw[paper region] (0,0) ellipse (2cm and 1.1cm);
  \draw[paper region] (-1.6,.66)
    .. controls (-2.0,2.25) and (-4.15,2.35) .. (-4.3,1.15)
    .. controls (-4.4,.1) and (-2.5,.25) .. cycle;
  \draw[paper region] (1.6,.66)
    .. controls (2.0,2.25) and (4.15,2.35) .. (4.3,1.15)
    .. controls (4.4,.1) and (2.5,.25) .. cycle;
  \node at (0,0) {$B$};
  \node at (-3,1.1) {$O_1$};
  \node at (3,1.1) {$O_{b-1}$};
  \node at (-3,2.25) {$H_1$};
  \node at (3,2.25) {$H_{b-1}$};
  \node at (0,1.8) {$\cdots$};
  \node[paper root,label=below:$r$] at (0,-1.1) {};
  \node[paper root,label=below left:$r_1$] at (-1.6,.66) {};
  \node[paper root,label=below right:$r_{b-1}$] at (1.6,.66) {};
\end{tikzpicture}
\caption{The decomposition of $(H,r)$ when $r$ is not a cut vertex}
\label{fig:block-parts}
\end{figure}

\begin{equation*}
\begin{aligned}
 N(H,r)
 &\ge\sum_{\substack{U\in\CC(B)\\r\in U}}\prod_{i:\,r_i\in U}q_i
 \ge\sum_{\substack{U\in\CC(B)\\r\in U}}
       \left(1+\sum_{i:\,r_i\in U}(q_i-1)\right)\\
 &=N(B,r)+\sum_{i=1}^{b-1}(q_i-1)
       \left|\{U\in\CC(B):\{r,r_i\}\subseteq U\}\right|\\
 &=N(B,r)+\sum_{i=1}^{b-1}(q_i-1)N(B,\{r,r_i\}).
\end{aligned}
\end{equation*}
For each $i$, both the entire vertex set of $B$ and each set obtained by deleting a vertex
outside $\{r,r_i\}$ are connected. Thus $N(B,\{r,r_i\})\ge b-1$.
Combining this with Lemma~\ref{lem:biconnectedrootcount} and $q_i\ge2m_i+1$ gives
\begin{equation*}
 N(H,r)-(4h-8)\ge\frac{b(b-1)}2+1+2(b-1)m-(4h-8)=(b-3)\left(\frac{b-6}{2}+2m\right).
\end{equation*}
The last expression is nonnegative when $b=3$ or $b\ge6$, and also when $b\in\{4,5\}$ and $m\ge1$.
If $m=0,b=4$, the degree condition on nonroot vertices forces the root to have degree three,
a contradiction. If $m=0,b=5$, call the neighbors of the root $u_1,u_2$
and its nonneighbors $v_1,v_2$. Each of $v_1,v_2$ is adjacent to the other three nonroot vertices.
Thus a set containing the root, other than the root alone, is connected if and only if
it contains $u_1$ or $u_2$. Hence $N(H,r)=1+(2^4-2^2)=13>12$.
Cases 1--3 prove the bound in the first assertion.
For $h\ge5$, we have $4h-8>2h$, so $N(H,r)=2h$ holds only when $H=K_4$.
The second assertion also follows in these cases, since $N(H,r)\ge2h>2h-1$.

\textbf{Case 4: the root has degree one.}
Only the second assertion applies. Let $r_1$ be the unique neighbor of $r$. Then $H-r$ is connected,
the degree of $r_1$ in it is at least two, and every other vertex has degree at least three.
The induction hypothesis for the first assertion at a smaller order gives
$N(H,r)=1+N(H-r,r_1)\ge1+2(h-1)=2h-1$.
This completes the induction.
\end{proof}

\subsection{Bounds involving \texorpdfstring{$q(G,B)$}{q(G,B)} and \texorpdfstring{$E(G,B)$}{E(G,B)}}\label{sec:compensation}\label{sec:boundary}

We now consider a more general concept of connected vertex sets introduced by Haslegrave~\cite{HaslegravePath2022}.
For a graph $G$ and $B\subseteq V(G)$, motivated by their work on $(S,H)$-connected sets, we define
\begin{equation*}
 \QQ(G,B)=\{\varnothing\}\cup
 \{U\ne\varnothing:\text{every component of }G[U]\text{ meets }B\}.
\end{equation*}
Write $q(G,B)=|\QQ(G,B)|$ and
$E(G,B)=\sum_{U\in\QQ(G,B)}(2|U|-\ord{G})$.
Note that these quantities are defined whether or not $B$ meets every component of $G$.
Throughout the rest of this subsection, we assume that $B$ meets every component of $G$.
When applying a result to a new graph and vertex set in a proof, we must still check this condition.

Haslegrave~\cite[Claim 2.1]{HaslegravePath2022} gives the following inequality.

\begin{lemma}\label{lem:boundaryhalf}
$E(G,B)\ge0$.
\end{lemma}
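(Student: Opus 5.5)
The plan is to prove $E(G,B)\ge0$ by induction on $\ord{G}$, splitting the sets in $\QQ(G,B)$ according to whether they contain a fixed vertex $v\in B$. The base case is the empty graph, or a single vertex with $B=V(G)$, where $E=0$ or $E=1$. The general definition of $q$ and $E$, which makes sense even when $B$ misses a component, will be used to absorb a defect at one point.

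\textbf{Step 1 (the decomposition identity).} Fix $v\in B$, put $G'=G-v$, $B_0=B\setminus\{v\}$ and $B_1=(B\cup\Gamma_G(v))\setminus\{v\}$.
\begin{itemize}
 \item A set $U\not\ni v$ lies in $\QQ(G,B)$ if and only if $U\in\QQ(G',B_0)$.
 \item A set $U\ni v$ lies in $\QQ(G,B)$ if and only if $W=U\setminus\{v\}\in\QQ(G',B_1)$. Indeed, each component of $G[W]$ either meets $B$ or contains a neighbour of $v$, and in the latter case it merges with $v$ into a component of $G[U]$ that meets $B$.
\end{itemize}
Since $2|U|-n=(2|U|-(n-1))-1$ in the first case and $2(|W|+1)-n=(2|W|-(n-1))+1$ in the second, this gives
\[
 E(G,B)=E(G',B_0)+E(G',B_1)+q(G',B_1)-q(G',B_0).
\]
Here $B_0\subseteq B_1$ implies $\QQ(G',B_0)\subseteq\QQ(G',B_1)$, so the last difference is nonnegative.

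\textbf{Step 2 (the easy term).} The set $B_1$ meets every component of $G'$: a component missing $B$ must contain a neighbour of $v$, since $G$ restricted to that component plus $v$ is connected to $B$ only through $v$. So the induction hypothesis gives $E(G',B_1)\ge0$.

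\textbf{Step 3 (the defect term, which is the main obstacle).} The set $B_0$ may miss some components of $G'$. Let $M$ be the union of those components, with $k=|M|$, and let $G''=G'-M$. Every set in $\QQ(G',B_0)$ avoids $M$, so
\[
 \QQ(G',B_0)=\QQ(G'',B_0), \qquad E(G',B_0)=E(G'',B_0)-k\,q(G'',B_0).
\]
By induction $E(G'',B_0)\ge0$, because $B_0$ meets every component of $G''$. It therefore suffices to show that
\[
 q(G',B_1)-q(G',B_0)\ge k\,q(G'',B_0).
\]
Every component of $G[M]$ contains a neighbour of $v$, hence meets $B_1$. Ordering each component of $M$ by a search from such a neighbour produces at least $k$ distinct nonempty sets $Y\subseteq M$ in $\QQ(G[M],B_1\cap M)$, using nested chains in each component. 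For every $W\in\QQ(G'',B_0)$ and each such $Y$, the set $W\cup Y$ lies in $\QQ(G',B_1)\setminus\QQ(G',B_0)$, and these sets are distinct. This yields the inequality, and combining the three bounds gives $E(G,B)\ge0$.

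The delicate point is exactly this compensation in Step 3. If that counting fails, the fallback is to choose $v\in B$ such that $B\setminus\{v\}$ still meets every component of $G-v$, which would make $k=0$. Such a choice would be found via an argument in the spirit of Lemma~\ref{lem:twoboundary}, taking a vertex of $B$ extremal in a spanning tree.
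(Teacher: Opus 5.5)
Your proof is correct. The paper gives no argument of its own for this lemma; it cites Haslegrave's Claim~2.1. Your induction is, however, exactly the root-deletion recursion the paper develops afterwards in Lemma~\ref{lem:deletionrelations}:
\begin{itemize}
 \item your $G''$, $B_1$ and $k$ are the paper's $G_*$, $B'$ and $\omega-1$;
 \item your combined identity $E(G,B)=E(G'',B_0)+E(G',B_1)+q(G',B_1)-(k+1)q(G'',B_0)$ is \eqref{eq:boundaryrec};
 \item your compensation count $q(G',B_1)\ge(k+1)q(G'',B_0)$ is the bound $q'=q_Wq_*\ge\omega q_0$, which follows from $q_W=N(H,r)\ge\omega$ (search prefixes) and $q_*\ge q_0$.
\end{itemize}
The paper runs the same computation in the proof of Lemma~\ref{lem:stricthalf}, using the stronger cycle bound $q_W\ge\omega+1$ to get strictness. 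Your version shows that the non-strict lemma falls out of the same machinery. That would make the subsection self-contained at essentially no extra cost, whereas the citation keeps it short.

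Two minor points:
\begin{itemize}
 \item The one-vertex base case gives $E=(0-1)+(2-1)=0$, not $1$. This is harmless, and the inductive step covers that case anyway.
 \item The fallback you mention is not available in general. For $G=K_2$ with $|B|=1$, no $v\in B$ leaves $B\setminus\{v\}$ meeting every component of $G-v$. So it matters that the Step~3 count works, and it does.
\end{itemize}
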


The next lemma relates this family to connected sets containing a fixed root.

\begin{lemma}\label{lem:rootdeletion}
Let $(H,r)$ be a connected rooted graph. Then
\begin{equation*}
 q(H-r,\Gamma_H(r))=N(H,r),\qquad E(H-r,\Gamma_H(r))=F(H,r).
\end{equation*}
\end{lemma}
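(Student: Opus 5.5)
The plan is to build an explicit bijection between $\QQ(H-r,\Gamma_H(r))$ and $\{U\in\CC(H):r\in U\}$, and then compare the summands in the definitions of $E$ and $F(H,r)$.

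\textbf{The bijection.} Write $G=H-r$ and $B=\Gamma_H(r)$, and define
\[
\Phi\colon\QQ(G,B)\to\{U\in\CC(H):r\in U\},\qquad W\mapsto W\cup\{r\}.
\]
\begin{enumerate}
 \item For $W=\varnothing$, the image $\{r\}$ is connected and contains $r$.
 \item For nonempty $W$, every component of $G[W]$ meets $B$. So each component contains a neighbor of $r$, and adding $r$ joins all components into one. Hence $H[W\cup\{r\}]$ is connected.
 \item Conversely, let $U\ni r$ be connected in $H$, and put $W=U\setminus\{r\}$. Each component $C$ of $H[W]=G[W]$ must have an edge to $r$ inside $H[U]$. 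Otherwise $C$ would be a union of components of $H[U]$ missing $r$, contradicting connectivity. So $C$ meets $B$, and $W\in\QQ(G,B)$.
\end{enumerate}
These two maps are mutually inverse, since $r\notin V(G)$. Counting both sides gives $q(G,B)=N(H,r)$.

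\textbf{The weighted identity.} Since $\ord{G}=\ord{H}-1$, for $U=\Phi(W)$ we have
\[
2|W|-\ord{G}=2(|U|-1)-(\ord{H}-1)=2|U|-(\ord{H}+1).
\]
Summing over the bijection gives
\[
E(G,B)=2S(H,r)-(\ord{H}+1)N(H,r)=F(H,r).
\]

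\textbf{Remaining check.} The standing assumption is that $B$ meets every component of $G$. This holds because $H$ is connected, so every component of $H-r$ contains a neighbor of $r$; the assumption is only needed for later applications anyway. The only real point is the converse connectivity argument in item (iii), and it is routine.
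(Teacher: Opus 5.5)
Your proposal is correct and is essentially the paper's proof: the paper uses the same bijection, written in the opposite direction as $U\mapsto U\setminus\{r\}$ from connected sets containing $r$ to $\QQ(H-r,\Gamma_H(r))$. It then derives the weighted identity by the same term-by-term rewriting $2(|U|-1)-(\ord{H}-1)=2|U|-(\ord{H}+1)$, using $\ord{H-r}=\ord{H}-1$.
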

\begin{proof}
Consider the map
\[
 \begin{aligned}
 \varphi\colon\{U\in\CC(H):r\in U\}&\longrightarrow\QQ(H-r,\Gamma_H(r)),\\
 U&\longmapsto U\setminus\{r\}.
 \end{aligned}
\]
For any connected set $U$ containing $r$, every component of $H[U\setminus\{r\}]=(H-r)[U\setminus\{r\}]$ contains a neighbor of $r$, so $\varphi$ is well-defined.
Conversely, if $U\in\QQ(H-r,\Gamma_H(r))$, adding $r$ connects all components of $(H-r)[U]$, while $U=\varnothing$ gives the singleton $\{r\}$, thus $\varphi$ is surjective. For the injection, it's trivial to check. 
Hence $\varphi$ is a bijection, and the first equality follows.
For the second equality,
\[
 \begin{aligned}
 E(H-r,\Gamma_H(r))
 &=\sum_{U\in\QQ(H-r,\Gamma_H(r))}(2|U|-\ord{H-r})\\
 &=\sum_{\substack{U\in\CC(H)\\r\in U}}
   \bigl(2(|U|-1)-(\ord{H}-1)\bigr)\\
 &=2S(H,r)-(\ord{H}+1)N(H,r)=F(H,r).
 \end{aligned}
\]
\end{proof}

Given a graph $G$ and $B\subseteq V(G)$, choose any vertex $r\in B$.
We will introduce several related graphs and their associated quantities, using Figures~\ref{fig:root-deletion}--\ref{fig:root-deletion-H} to make the definitions easier to follow.

\begingroup
\setlength{\intextsep}{8pt}
\setlength{\abovecaptionskip}{6pt}
\begin{figure}[H]
\centering
\begin{tikzpicture}[font=\small]
  \draw[paper component frame]
    (-4.55,-1.5) rectangle (7.95,3.45);
  \node[anchor=west,fill=white,inner sep=2pt] at (-4.3,3.45) {$C_1$};
  \coordinate (root) at (0,2.9);

  \pic (near-only) at (-3.6,0) {paper root branch={1/0}};
  \pic (both) at (-1.2,0) {paper root branch={1/1}};
  \pic (far-only) at (1.2,0) {paper root branch={0/1}};
  \pic[paper near-left y=.65,paper near-right y=1.4] (neither) at (3.6,0) {paper root branch={0/0}};
  \foreach \branchname in {near-only,both,far-only} {
    \draw[paper edge] (root)--(\branchname-near-left);
    \draw[paper edge] (root)--(\branchname-near-right);
  }
  \draw[paper edge] (root)--(neither-near-left);
  \draw[paper edge] (root)--(neither-near-right);
  \node[paper root,fill=papermediumgray,label=above:$r$] at (root) {};
  \foreach \x in {-2.4,0,2.4,4.75} {
    \node at (\x,1.05) {$\cdots$};
    \node at (\x,-.65) {$\cdots$};
  }
  \node[anchor=west,font=\footnotesize,inner sep=0pt]
    at (5.25,1.05) {$\Gamma_G(r)$};
  \node[anchor=west,align=center,font=\footnotesize,inner sep=0pt]
    at (5.25,-.65) {$V(C_1)\setminus$\\[-1pt]$(\Gamma_G(r)\cup\{r\})$};

  \pic {paper lower components};
  \pic {paper B legend={B}};
\end{tikzpicture}
\caption{The structure of $G$. Let $C_1$ be the component of $G$ containing $r$, and let $C_2,C_3,\ldots$ be the other components. The four types of components of $C_1-r$, from left to right, meet $B$ only among neighbors of $r$, among both neighbors and nonneighbors, only among nonneighbors, or not at all. Write $q=q(G,B), E=E(G,B).$}
\label{fig:root-deletion}
\end{figure}

\begin{figure}[H]
\centering
\begin{tikzpicture}[font=\small]
  \draw[paper component frame] (-4.55,-1.5) rectangle (5.25,1.9);
  \pic (prime-near-only) at (-3.6,0) {paper root branch={2/0}};
  \pic (prime-both) at (-1.2,0) {paper root branch={2/1}};
  \pic (prime-far-only) at (1.2,0) {paper root branch={2/1}};
  \pic[paper near-left y=.65,paper near-right y=1.4] (prime-neither) at (3.6,0) {paper root branch={2/0}};
  \foreach \x in {-2.4,0,2.4,4.75} {
    \node at (\x,1.05) {$\cdots$};
    \node at (\x,-.65) {$\cdots$};
  }
  \pic {paper lower components};
  \pic {paper B legend={B'}};
\end{tikzpicture}
\caption{The structure of $G'$ where $G'=G-r$, $B_0=B\setminus\{r\}$, and $B'=B_0\cup\Gamma_G(r)$.
Write $q'=q(G',B')$ and $E'=E(G',B')$.}
\label{fig:root-deletion-prime}
\end{figure}

\begin{figure}[H]
\centering
\begin{tikzpicture}[font=\small]
  \draw[paper component frame] (-4.55,-1.5) rectangle (2.9,1.9);
  \pic (zero-near-only) at (-3.6,0) {paper root branch={1/0}};
  \pic (zero-both) at (-1.2,0) {paper root branch={1/1}};
  \pic (zero-far-only) at (1.2,0) {paper root branch={0/1}};
  \foreach \x in {-2.4,0,2.4} {
    \node at (\x,1.05) {$\cdots$};
    \node at (\x,-.65) {$\cdots$};
  }
  \pic {paper lower components};
  \pic {paper B legend={B_0}};
\end{tikzpicture}
\caption{The structure of $G_*$ with $B_0$, where $G_*$ is the union of all components of $G'$ that meet $B_0$.
Write $q_0=q(G_*,B_0)$ and $E_0=E(G_*,B_0)$.}
\label{fig:root-deletion-zero}
\end{figure}

\begin{figure}[H]
\centering
\begin{tikzpicture}[font=\small]
  \draw[paper component frame] (-4.55,-1.5) rectangle (2.9,1.9);
  \pic (star-near-only) at (-3.6,0) {paper root branch={2/0}};
  \pic (star-both) at (-1.2,0) {paper root branch={2/1}};
  \pic (star-far-only) at (1.2,0) {paper root branch={2/1}};
  \foreach \x in {-2.4,0,2.4} {
    \node at (\x,1.05) {$\cdots$};
    \node at (\x,-.65) {$\cdots$};
  }
  \pic {paper lower components};
  \pic {paper B legend={B_*}};
\end{tikzpicture}
\caption{The structure of $G_*$ with $B_*$, where $B_*=B_0\cup(\Gamma_G(r)\cap V(G_*))$.
Write $q_*=q(G_*,B_*)$.}
\label{fig:root-deletion-star}
\end{figure}

\begin{figure}[H]
\centering
\begin{tikzpicture}[font=\small]
  \coordinate (H-root) at (-1.4,2.4);
  \pic[paper near-left y=.65,paper near-right y=1.4] (H-neither) at (0,0) {paper root branch={0/0}};
  \draw[paper edge] (H-root)--(H-neither-near-left);
  \draw[paper edge] (H-root)--(H-neither-near-right);
  \node[paper root,fill=papermediumgray,label=above:$r$] at (H-root) {};
  \node at (1.15,1.05) {$\cdots$};
  \node at (1.15,-.65) {$\cdots$};
\end{tikzpicture}
\caption{The rooted graph $(H,r)$, where $W=V(G')\setminus V(G_*)$, $\omega=|W|+1$, and $H=G[W\cup\{r\}]$. Write $q_W=N(H,r)$.}
\label{fig:root-deletion-H}
\end{figure}
\endgroup

Since $B$ meets every component of $G$, the definitions illustrated above show that $B'$ meets every component of $G'$ and that both $B_0$ and $B_*$ meet every component of $G_*$ and that $\Gamma_H(r)$ meets every component of $G[W]$.
Thus Lemma~\ref{lem:boundaryhalf} applies to all these pairs.

\begin{lemma}\label{lem:deletionrelations}
With the notation in Figures~\ref{fig:root-deletion}--\ref{fig:root-deletion-H}, we have
\begin{gather}
 q=q'+q_0,\qquad E=E'+q'+E_0-\omega q_0,\label{eq:boundaryrec}\\
 q'=q_Wq_*,\qquad q_*\ge q_0,\qquad N(G)\le N(G')+q'.\label{eq:boundaryproduct}
\end{gather}
\end{lemma}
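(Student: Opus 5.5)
We verify each relation by splitting the relevant families of vertex sets according to whether they contain the root $r$. In every case we identify each piece with one of the families $\QQ(\cdot,\cdot)$ of Figures~\ref{fig:root-deletion}--\ref{fig:root-deletion-H} by an explicit bijection or injection. Throughout, $n=\ord{G}$, so $\ord{G'}=n-1$ and $\ord{G_*}=n-\omega$. The last equality holds because $V(G')=V(G_*)\mathbin{\dot\cup}W$ and $\omega=|W|+1$.

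\textbf{The identities~\eqref{eq:boundaryrec}.} We partition $\QQ(G,B)$ into the sets $U$ with $r\in U$ and those with $r\notin U$.

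For the first kind, consider the map $U\mapsto U\setminus\{r\}$, as in Lemma~\ref{lem:rootdeletion}. Each component of $G'[U\setminus\{r\}]$ arises in one of two ways. Either it is a piece of the component of $G[U]$ containing $r$, and then it contains a vertex of $\Gamma_G(r)$. Or it is a component of $G[U]$ avoiding $r$, and then it meets $B\setminus\{r\}=B_0$. In both cases it meets $B'$, so the map lands in $\QQ(G',B')$. Conversely, let $U'\in\QQ(G',B')$. Adding $r$ joins every component of $U'$ that meets $\Gamma_G(r)$ to the component containing $r$, and that component meets $B$ because $r\in B$. Every other component of $U'$ meets $B_0\subseteq B$. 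The empty set corresponds to $\{r\}$. So this part has size $q'$.

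For the second kind, $U\subseteq V(G')$ and every component of $G[U]=G'[U]$ meets $B_0$. Each such component lies in a component of $G'$ meeting $B_0$, so $U\subseteq V(G_*)$. This part is therefore exactly $\QQ(G_*,B_0)$, and $q=q'+q_0$ follows.

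For $E$ we compare the weights.
\begin{itemize}
 \item If $r\in U$ and $U'=U\setminus\{r\}$, then $2|U|-n=(2|U'|-(n-1))+1$. Summing over this part gives $E'+q'$.
 \item If $r\notin U$, then $2|U|-n=(2|U|-(n-\omega))-\omega$. Summing over this part gives $E_0-\omega q_0$.
\end{itemize}

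\textbf{The relations~\eqref{eq:boundaryproduct}.} First, $G'$ is the disjoint union of $G[W]$ and $G_*$, with no edges between them. Also $B'\cap W=\Gamma_H(r)$, since $W$ contains no vertex of $B_0$, and $B'\cap V(G_*)=B_*$. Membership in $\QQ$ is a condition on each component separately. Hence $U\mapsto(U\cap W,\,U\cap V(G_*))$ is a bijection
\[
 \QQ(G',B')\to\QQ(G[W],\Gamma_H(r))\times\QQ(G_*,B_*).
\]
Since $G[W]=H-r$, Lemma~\ref{lem:rootdeletion} gives $q(G[W],\Gamma_H(r))=N(H,r)=q_W$. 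Therefore $q'=q_Wq_*$.

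Second, $B_0\subseteq B_*$, and membership in $\QQ$ is monotone in the marked set. Hence $\QQ(G_*,B_0)\subseteq\QQ(G_*,B_*)$, which gives $q_*\ge q_0$.

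Finally, split $\CC(G)$ according to whether a set contains $r$. The connected sets avoiding $r$ are exactly $\CC(G')$, giving $N(G')$ sets. For connected sets $U\ni r$, the map $U\mapsto U\setminus\{r\}$ is injective. Its image lies in $\QQ(G',\Gamma_G(r))\subseteq\QQ(G',B')$, with $\{r\}\mapsto\varnothing$. This gives $N(G)\le N(G')+q'$.

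The only delicate point is checking that $W$ and $G_*$ split $G'$ compatibly with $B'$, which is what the product formula needs. The components of $C_1-r$ that avoid $B_0$ still meet $\Gamma_G(r)$ because $C_1$ is connected, and the components $C_2,C_3,\ldots$ meet $B_0$ by the standing assumption that $B$ meets every component of $G$. So each component of $G'$ lies entirely in $W$ or entirely in $G_*$.
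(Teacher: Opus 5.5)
Your proposal is correct and follows the paper's proof essentially step for step. You use the same split of $\QQ(G,B)$ by whether $r\in U$, the same bijections $U\mapsto U\setminus\{r\}$ and $U\mapsto(U\cap W,U\cap V(G_*))$, monotonicity in the marked set for $q_*\ge q_0$, and the same injection for $N(G)\le N(G')+q'$. The only difference, which is immaterial, is that you route that injection through $\QQ(G',\Gamma_G(r))\subseteq\QQ(G',B')$ rather than through $\QQ(G,B)$.
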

\begin{proof}
Consider the map
\[
 \begin{aligned}
 \varphi\colon\{U\in\QQ(G,B):r\in U\}&\longrightarrow\QQ(G',B'),\\
 U&\longmapsto U\setminus\{r\}.
 \end{aligned}
\]
As in the proof of Lemma~\ref{lem:rootdeletion}, $\varphi$ is a bijection.
The members of $\QQ(G,B)$ avoiding $r$ form $\QQ(G_*,B_0)$, as shown in Figure~\ref{fig:root-deletion-zero}.
Hence $q=q'+q_0$.
Using $\ord{G'}=\ord{G}-1$ and $\ord{G_*}=\ord{G}-\omega$, we also obtain
\[
 \begin{aligned}
 E
 &=\sum_{U\in\QQ(G',B')}\bigl(2(|U|+1)-\ord{G}\bigr)
   +\sum_{U\in\QQ(G_*,B_0)}\bigl(2|U|-\ord{G}\bigr)\\
 &=E'+q'+E_0-\omega q_0.
 \end{aligned}
\]

Consider the map
\[
 \begin{aligned}
 \psi\colon\QQ(G',B')&\longrightarrow\QQ(G[W],\Gamma_H(r))\times\QQ(G_*,B_*),\\
 U&\longmapsto\bigl(U\cap W,U\cap V(G_*)\bigr).
 \end{aligned}
\]
The diagrams of $G'$, $G[W]$, and $G_*$ in Figures~\ref{fig:root-deletion-prime}--\ref{fig:root-deletion-H} show that every $U\in\QQ(G',B')$ has the natural decomposition given by $\psi$. Thus $\psi$ is well-defined; If $\psi(U_1)=\psi(U_2)$, then $U_1\cap W=U_2\cap W$ and $U_1\cap V(G_*)=U_2\cap V(G_*)$. Taking their union gives $U_1=U_2$, so $\psi$ is injective; For any $(U_1,U_2)\in\QQ(G[W],\Gamma_H(r))\times\QQ(G_*,B_*)$, it's trivial to check $\psi(U_1\cup U_2)=(U_1,U_2)$, so $\psi$ is surjective; hence $\psi$ is a bijection.
Each component of $G[W]$ contains a neighbor of $r$, so $H$ is connected (Figure~\ref{fig:root-deletion-H}).
Lemma~\ref{lem:rootdeletion} gives $q(G[W],\Gamma_H(r))=N(H,r)=q_W$, and hence $q'=q_Wq_*$.

Since $B_0\subseteq B_*$, we have $\QQ(G_*,B_0)\subseteq\QQ(G_*,B_*)$, so $q_*\ge q_0$. Since every connected set containing $r$ belongs to $\QQ(G,B)$ and $\varphi$ is bijective, we have $N(G,r)\le q'$. Thus $N(G)=N(G')+N(G,r)\le N(G')+q'$.
\end{proof}

The next lemma strengthens Haslegrave's bound using degree conditions of vertices outside $B$.

\begin{lemma}\label{lem:compbounds}
Suppose that every $u\in V(G)\setminus B$ has degree at least $3$ in $G$.
\begin{enumerate}
 \item We have
 \begin{equation*}
  q(G,B)+2E(G,B)\ge N(G)+1.
 \end{equation*}
 \item If $|B|\ge2$, then
 \begin{equation*}
  q(G,B)+E(G,B)\ge N(G)+1.
 \end{equation*}
\end{enumerate}
\end{lemma}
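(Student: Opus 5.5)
We will prove both assertions together by induction on $\ord{G}$, deleting a root $r\in B$ and using the decomposition of Lemma~\ref{lem:deletionrelations}. Fix $r\in B$ and use the notation of Figures~\ref{fig:root-deletion}--\ref{fig:root-deletion-H}. By \eqref{eq:boundaryrec},
\[
 q+2E=(q'+2E')+q'+q_0+2E_0-2\omega q_0 .
\]
We will also use $N(G)\le N(G')+q'$ from \eqref{eq:boundaryproduct}. The pair $(G',B')$ inherits the hypothesis, since every vertex outside $B'$ is outside $B$ and not adjacent to $r$, so its degree is unchanged. Hence induction applies to $(G',B')$ and gives $q'+2E'\ge N(G')+1$, or, when $|B'|\ge2$, $q'+E'\ge N(G')+1$.

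For (i) it then suffices to show that
\[
 q_0+2E_0\ge2\omega q_0 .
\]
If $B_0=\varnothing$, then $G_*$ is empty and $q_0=1$, $E_0=0$. This is the base situation, and it is handled directly by using $q'\ge N(G,r)$ together with Corollary~\ref{cor:roothalf} in the form of Lemma~\ref{lem:rootdeletion}. Otherwise we must compensate for the loss $2\omega q_0$ coming from the part $W$ hanging off $r$ alone. The slack is the term $q'=q_Wq_*\ge q_Wq_0$. Since every vertex of $W$ has degree at least $3$ and $r$ is the root, Lemma~\ref{lem:rootcount}(ii) gives $q_W=N(H,r)\ge2\omega-1$. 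Thus $q'\ge(2\omega-1)q_0$, and Lemma~\ref{lem:boundaryhalf} gives $E_0\ge0$. Together these leave a deficit of only $q_0$. So the plan is to keep one copy of $q'$ unused in the $N(G)$ comparison: apply (ii) to $(G',B')$ whenever $|B'|\ge2$, which frees an extra $E'\ge0$. Alternatively, apply (i) to $(G_*,B_0)$, which gives $q_0+2E_0\ge N(G_*)+1\ge q_0$ when $G_*$ is nonempty. That absorbs the missing $q_0$.

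For (ii), with $|B|\ge2$, we choose $r\in B$ so that $B_0\neq\varnothing$. The recursion gives
\[
 q+E=(q'+E')+q'+E_0-\omega q_0+q_0 .
\]
Apply (i) or (ii) to $(G',B')$, and bound $\omega q_0\le q_0(q_W+1)/2\le(q'+q_0)/2$ using $q_W\ge2\omega-1$ and $q_*\ge q_0$. The remaining inequality is $q'/2+E_0+q_0/2\ge0$, together with an $E'$ term that we get from $(G',B')$ via Lemma~\ref{lem:boundaryhalf}. We will split into the cases $|B'|=1$, where $G'$ is essentially a single component seen from one vertex and we invoke (i), and $|B'|\ge2$, where we invoke (ii).

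The main obstacle will be the bookkeeping at the boundary cases. These are: $W$ empty ($\omega=1$, so $q_W=1$ and Lemma~\ref{lem:rootcount} gives nothing); $r$ having degree $0$ or $1$; and $G$ disconnected, where $N(G)$ sums over components while $q$ multiplies over them. For the disconnected case we will first prove a product reduction: if $G=G_1\cup G_2$, then $q=q_1q_2$ and $E=E_1q_2+E_2q_1$, while $N=N_1+N_2$. Hence it suffices to treat connected $G$, since $q_iq_j$ dominates sums. In the $\omega=1$ case we expect to need the sharper $q_*\ge q_0+1$ (the set $B_*\setminus B_0$ is nonempty when $r$ has a neighbor) to cover the deficit, and checking this is where I expect the real work.
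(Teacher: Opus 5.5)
The genuine gap is in (ii) when $W\ne\varnothing$. Applying (i) or (ii) to $(G',B')$ as a single graph cannot close this case. Even (ii) on $(G',B')$, together with $N(G)\le N(G')+q'$, yields only $q+E-N(G)\ge 1+E_0-(\omega-1)q_0$. Your substitution $\omega q_0\le(q'+q_0)/2$ then leaves $E_0+q_0/2-q'/2$, not $+q'/2$ (the sign of $q'/2$ is reversed). This is negative whenever $\omega\ge2$ and $E_0<q_0$.

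For instance, take $K_4$ on $\{r,w_1,w_2,w_3\}$ with a pendant vertex $b$ attached to $r$, and $B=\{r,b\}$. Then $q_0=2$, $E_0=0$, $\omega=4$ and $q'=N(G,r)=16$. Your estimate gives $-5$, while in fact $q+E-N(G)=18+8-24=2$.

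The slack you lose comes from $G'$ being the disjoint union of $G[W]$ and $G_*$. The quantities $q'=q_Wq_*$ and $E'=E_Wq_*+q_WE_*$ are multiplicative, whereas $N(G')=N(G[W])+N(G_*)$ is additive, and the inductive bound for $(G',B')$ discards this. The missing step (the paper's Case 2) is to apply (i) separately to $(G[W],\Gamma_H(r))$ and to $(G_*,B_*)$. This gives $q'+E'-N(G')\ge 1+(q_W-1)(q_*-1)+(q_*-2)E_W+(q_W-2)E_*\ge 1+(q_W-1)(q_*-1)$. Then $(q_W-1)(q_*-1)\ge(2\omega-2)(q_0-1)\ge(\omega-1)q_0$, since $q_0\ge2$. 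You do write down these product formulas, but only to reduce the top-level $G$ to the connected case. That does not help, because $G'$ is disconnected here whatever $G$ is.

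Your bookkeeping for (i) also needs repair, and you have placed the difficulty in the wrong case. From $E=E'+q'+E_0-\omega q_0$ one gets $q+2E=(q'+2E')+2q'+q_0+2E_0-2\omega q_0$, not $+q'$. With this identity, induction (i) on $(G',B')$ and $N(G)\le N(G')+q'$ leave exactly $q'-(2\omega-1)q_0+2E_0\ge 0$. This uses only $q'\ge(2\omega-1)q_0$ and $E_0\ge0$, and it is the whole proof of (i). It covers $B_0=\varnothing$ uniformly; there the needed input is $q_W\ge2\omega-1$ from Lemma~\ref{lem:rootcount}(ii), and Corollary~\ref{cor:roothalf} alone is too weak.

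Your reduced target $q_0+2E_0\ge2\omega q_0$ is false in general, since $E_0=0$ whenever $B_0=V(G_*)$. The proposed patch $N(G_*)+1\ge q_0$ also fails, for example when $G_*$ is edgeless with $B_0=V(G_*)$.

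Finally, the case $W=\varnothing$, which you single out as the main obstacle, is the easy one in (ii), because the deficit $(\omega-1)q_0$ vanishes. If $|B'|\ge2$, apply (ii) to $(G',B')$. If $|B'|=1$, apply (i) to $(G_*,B_0)=(G',B')$, where $q=2q_0$ and $E=2E_0$. Either finishes at once, with no need for $q_*\ge q_0+1$.
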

\begin{proof}
We proceed by induction on $\ord{G}$, proving both assertions simultaneously.
If $G$ has no vertices, (i) holds with equality and (ii) does not apply.
For nonempty $G$, we have $B\ne\varnothing$ since $B$ meets every component of $G$.
Assume both assertions for all graphs of smaller order and choose $r\in B$.
Every vertex of $W$ lies outside $B$ and has the same degree in $H$ as in $G$ (Figure~\ref{fig:root-deletion-H}).
Lemma~\ref{lem:rootcount}\textnormal{(ii)} gives
$q_W\ge2\omega-1$, so~\eqref{eq:boundaryproduct} gives $q'\ge(2\omega-1)q_0$.

Every vertex of $G'$ outside $B'$ lies outside $B$ and has the same degree in $G'$ as in $G$ (Figure~\ref{fig:root-deletion-prime}).
The induction hypothesis for (i) gives $q'+2E'\ge N(G')+1$.
Using~\eqref{eq:boundaryrec} and~\eqref{eq:boundaryproduct}, we obtain
\begin{align*}
 q+2E-N(G)
 &=(q'+q_0)+2(E'+q'+E_0-\omega q_0)-N(G)\\
 &\ge (q'+q_0)+2(E'+q'+E_0-\omega q_0)-(N(G')+q')\\
 &\ge(q'+2E'-N(G'))+q'-(2\omega-1)q_0+2E_0\\
 &\ge1+q'-(2\omega-1)q_0+2E_0\ge1,
\end{align*}
where the last inequality uses $E_0\ge0$ and the bound $q'\ge(2\omega-1)q_0$.

For (ii), assume $|B|\ge2$.
Since $B_0=B\setminus\{r\}\ne\varnothing$, we have $q_0\ge2$.

\textbf{Case 1: $W=\varnothing$.}
Here $\omega=1$ and $G_*=G'$.
If $|B'|\ge2$, apply the induction hypothesis for (ii) to $(G',B')$.
Using~\eqref{eq:boundaryrec}, \eqref{eq:boundaryproduct}, and $E_0\ge0$, we obtain
$ q+E-N(G)\ge q'+E'-N(G')+E_0\ge1$. If $|B'|=1$, then $B'=B_0$.
Since $G'=G_*$, we have $q'=q_0$ and $E'=E_0$, so
$q=2q_0$ and $E=2E_0$.
The induction hypothesis for (i) on $(G_*,B_0)$ combined with ~\eqref{eq:boundaryproduct} gives
$q+E-N(G)\ge q_0+2E_0-N(G')\ge1$.

\textbf{Case 2: $W\ne\varnothing$.}
Here $\omega\ge2$.
Put $E_W=F(H,r)$ and $E_*=E(G_*,B_*)$.
By Lemma~\ref{lem:rootdeletion}, $q(G[W],\Gamma_H(r))=q_W$ and $E(G[W],\Gamma_H(r))=E_W$.
We have
\begin{equation}\label{eq:strongingredients}
 q_*\ge q_0\ge2,\qquad q_W\ge2\omega-1\ge3.
\end{equation}
Every vertex of $G_*$ outside $B_*$ has the same degree in $G_*$ as in $G$ (Figure~\ref{fig:root-deletion-star}).
Likewise, every vertex of $G[W]$ outside $\Gamma_H(r)$ has the same degree in $G[W]$ as in $G$ (Figure~\ref{fig:root-deletion-H}).
Both $G[W]$ and $G_*$ have smaller order than $G$, so the induction hypothesis for (i) gives
\begin{equation}\label{eq:componentweak}
 N(G[W])\le q_W+2E_W-1,\qquad N(G_*)\le q_*+2E_*-1.
\end{equation}

Under the bijection $\psi$ in the proof of Lemma~\ref{lem:deletionrelations}, choosing $U\in\QQ(G',B')$ is equivalent to independently choosing
$U_1\in\QQ(G[W],\Gamma_H(r))$ and $U_2\in\QQ(G_*,B_*)$ and taking their disjoint union.
Since $|U|=|U_1|+|U_2|$ and $\ord{G'}=|W|+\ord{G_*}$, we have
\[
\begin{aligned}
 E'
 &=\sum_{U\in\mathcal{Q}(G',B')}(2|U|-v(G'))\\
 &=q_*\sum_{U_1}(2|U_1|-|W|)+q_W\sum_{U_2}(2|U_2|-\ord{G_*}).
\end{aligned}
\]
Here each $U_1$ occurs in $q_*$ pairs and each $U_2$ in $q_W$ pairs, and the two sums in the last line are $E_W$ and $E_*$, respectively.
Also, no edges join $W$ to $V(G_*)$, so every connected set of $G'$ lies entirely in one of these two parts. Thus
\begin{equation}\label{eq:momentproduct}
 E'=E_Wq_*+q_WE_*,\qquad N(G')=N(G[W])+N(G_*).
\end{equation}
Combining these identities and bounds, we obtain
\begin{align*}
 q+E-N(G)
 &\ge (q_*-2)E_W+(q_W-2)E_*+E_0+q_W(q_*-1)-q_*-(\omega-1)q_0+2\\
 &\ge q_W(q_*-1)-q_*-(\omega-1)q_0+2\\
 &\ge (q_W-\omega)q_*-q_W+2\\
 &\ge 2(q_W-\omega)-q_W+2\\
 &= q_W-2\omega+2\ge1.
\end{align*}
The first inequality follows from~\eqref{eq:boundaryrec} and~\eqref{eq:momentproduct}, using~\eqref{eq:boundaryproduct} and~\eqref{eq:componentweak} to bound $N(G)$.
For the second, Lemma~\ref{lem:boundaryhalf} gives $E_W,E_*,E_0\ge0$, and~\eqref{eq:strongingredients} gives $q_*\ge2$ and $q_W\ge3$.
The third uses $q_0\le q_*$ with $\omega\ge2$; the fourth uses $q_*\ge2$ with $q_W-\omega\ge\omega-1\ge1$; and the last uses $q_W\ge2\omega-1$, all from~\eqref{eq:strongingredients}. This proves (ii).
\end{proof}

\begin{corollary}\label{cor:rhononpositive}
Suppose that $(H,r)$ is connected, that $d_H(r)\ge2$, and that every
$u\in V(H)\setminus(\Gamma_H(r)\cup\{r\})$ has degree at least $3$. Then
$N(H,r)+F(H,r)\ge\overline N(H,r)+1$.
\end{corollary}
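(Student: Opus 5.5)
Apply Lemma~\ref{lem:compbounds}(ii) to the graph $G=H-r$ with $B=\Gamma_H(r)$, and translate the result back to $H$ using Lemma~\ref{lem:rootdeletion}.

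First, check the standing hypotheses of Subsection~\ref{sec:compensation}. Since $H$ is connected, every component of $H-r$ contains a neighbor of $r$, so $B$ meets every component of $H-r$.

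Next, check the hypotheses of Lemma~\ref{lem:compbounds}. The assumption $d_H(r)\ge2$ gives $|B|\ge2$. Take any vertex $u$ of $H-r$ outside $B$. Then $u\notin\Gamma_H(r)\cup\{r\}$, so $u$ is not adjacent to $r$ and $d_{H-r}(u)=d_H(u)\ge3$. Every vertex outside $B$ therefore has degree at least $3$ in $H-r$, and Lemma~\ref{lem:compbounds}(ii) gives
\[
 q(H-r,\Gamma_H(r))+E(H-r,\Gamma_H(r))\ge N(H-r)+1 .
\]

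Finally, translate. By Lemma~\ref{lem:rootdeletion}, the left side equals $N(H,r)+F(H,r)$. By definition, $N(H-r)=\overline N(H,r)$. Substituting both gives $N(H,r)+F(H,r)\ge\overline N(H,r)+1$, as claimed.

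The only step needing care is the degree condition: we need degree at least $3$ in $H-r$, not merely in $H$. This holds exactly because the hypothesis is imposed only on non-neighbors of $r$, and deleting $r$ does not change their degrees. Neighbors of $r$ may lose degree, but they lie in $B$, where Lemma~\ref{lem:compbounds} imposes no condition. No further obstacle is expected.
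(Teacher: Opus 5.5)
Your proposal is correct and follows the paper's proof exactly: you apply Lemma~\ref{lem:compbounds}\textnormal{(ii)} to $(H-r,\Gamma_H(r))$ after checking that $\Gamma_H(r)$ meets every component, that $|\Gamma_H(r)|\ge2$, and that non-neighbors of $r$ keep their degree. You then translate the result back to $H$ using Lemma~\ref{lem:rootdeletion} and $N(H-r)=\overline N(H,r)$.
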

\begin{proof}
The set $\Gamma_H(r)$ meets every component of $H-r$ and contains at least two vertices.
Every vertex of $H-r$ outside $\Gamma_H(r)$ has the same degree in $H-r$ as in $H$.
The claim follows from Lemmas~\ref{lem:compbounds}\textnormal{(ii)} and~\ref{lem:rootdeletion}.
\end{proof}

We next determine when $E(G,B)=0$ under a weaker degree condition outside $B$.
This will give the equality cases in the main theorem.

\begin{lemma}\label{lem:stricthalf}
Suppose that every $u\in V(G)\setminus B$ has degree at least $2$. Then
$E(G,B)=0\Longleftrightarrow B=V(G)$.
\end{lemma}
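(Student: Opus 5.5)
The plan is to prove the easy direction by symmetry and the converse by induction on $\ord{G}$, using the recursion of Lemma~\ref{lem:deletionrelations}. Throughout, $B$ meets every component of $G$, as assumed in this subsection.

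\emph{The direction $B=V(G)\Rightarrow E(G,B)=0$.} If $B=V(G)$, every component of every $G[U]$ meets $B$, so $\QQ(G,B)$ is the full power set of $V(G)$. Then $E(G,B)=\sum_{U\subseteq V(G)}(2|U|-\ord{G})=0$, since the involution $U\mapsto V(G)\setminus U$ negates each term.

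\emph{The converse.} I will show that $B\ne V(G)$ forces $E(G,B)>0$, by induction on $\ord{G}$; the empty graph and one-vertex graphs are trivial. Fix $r\in B$ and use the notation of Figures~\ref{fig:root-deletion}--\ref{fig:root-deletion-H}. By \eqref{eq:boundaryrec},
\[
 E=E'+E_0+q'-\omega q_0 .
\]
Lemma~\ref{lem:boundaryhalf} gives $E',E_0\ge0$, so it suffices to show $q'-\omega q_0>0$, or $E'>0$, or strict positivity from some other term.

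For a general bound, note that in any connected rooted graph $(H,r)$ the spanning-tree growth from the proof of Lemma~\ref{lem:cyclicroot} gives $N(H,r)\ge\ord{H}$. Hence $q_W\ge\omega$, and by \eqref{eq:boundaryproduct}
\[
 q'=q_Wq_*\ge\omega q_*\ge\omega q_0 .
\]
I then split into cases.

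\textbf{Case $W\ne\varnothing$.} Every vertex of $W$ lies outside $B$ and keeps its full neighbourhood in $H$, so it has degree at least $2$ in $H$, while $r$ has degree at least $1$. The degree sum of $H$ is at least $2(\omega-1)+1$, so $H$ has at least $\omega$ edges on $\omega$ vertices and therefore contains a cycle. Lemma~\ref{lem:cyclicroot} then gives $q_W\ge\omega+1$, so $q'-\omega q_0\ge q_*\ge1$ and $E>0$.

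\textbf{Case $W=\varnothing$.} Here $\omega=1$ and $G_*=G'$, so $E=E'+E_0+(q_*-q_0)$ with all three terms nonnegative.
\begin{itemize}
\item[(a)] Suppose $B'\ne V(G')$. Every vertex of $G'$ outside $B'$ lies outside $B$, is not adjacent to $r$, and so keeps its degree, which is at least $2$. Also $B'$ meets every component of $G'$. The induction hypothesis therefore gives $E'>0$.
\item[(b)] Suppose $B'=V(G')$. Then every vertex $u\notin B$ is a neighbour of $r$, and such a $u$ exists because $B\ne V(G)$. Since $W=\varnothing$, $u$ lies in $G_*$, so $u\in B_*\setminus B_0$. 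The singleton $\{u\}$ belongs to $\QQ(G_*,B_*)$ but not to $\QQ(G_*,B_0)$, which gives $q_*>q_0$.
\end{itemize}
In both subcases $E>0$.

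I expect the main obstacle to be the strictness in the case $W\ne\varnothing$. The needed ingredient is that the degree-$2$ hypothesis outside $B$ forces a cycle in $H$, so that Lemma~\ref{lem:cyclicroot} improves $q_W\ge\omega$ to $q_W\ge\omega+1$. The remaining point to check is that the degree hypothesis and the condition that $B'$ meets every component of $G'$ both pass to $(G',B')$ in subcase (a).
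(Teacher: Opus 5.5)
Your proof is correct and follows the paper's argument: the same recursion \eqref{eq:boundaryrec}, the same degree-sum cycle argument with Lemma~\ref{lem:cyclicroot} when $W\ne\varnothing$, and the same singleton $\{u\}$ when $W=\varnothing$. The only difference is that the paper chooses $r\in B$ adjacent to some $u\notin B$ at the start, which exists since $B$ meets every component and $B\ne V(G)$; with that choice your subcase (a) never occurs, so the induction on $\ord{G}$ is unnecessary.
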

\begin{proof}
If $B=V(G)$, then $\QQ(G,B)=2^{V(G)}$. Then we have
\[
  E(G,B)=\sum_{U\in\QQ(G,B)}\big(2|U|-v(G)\big)=2\sum_{U\subseteq V(G)}|U|-v(G)2^{v(G)}=0.
\]

This also holds for the graph with no vertices.

Suppose that $B\ne V(G)$. Since $B$ meets every component, there is an edge $ru$ with $r\in B$ and $u\notin B$.
If $W=\varnothing$, then $G_*=G'$ and $\omega=1$.
Enlarging $B_0$ to $B'$ adds at least the singleton $\{u\}$ to the family,
so $q'\ge q_0+1$.
By Lemma~\ref{lem:boundaryhalf} and~\eqref{eq:boundaryrec},
$E=E'+E_0+q'-q_0\ge q'-q_0\ge1$; If $W\ne\varnothing$, the connected graph $H$ has root degree at least $1$.
Every nonroot vertex of $H$ lies outside $B$ and has the same degree in $H$ as in $G$ (Figure~\ref{fig:root-deletion-H}), so its degree is at least $2$. Thus the sum of degrees in $H$ is at least $1+2(\omega-1)>2(\omega-1)$, which implies that $H$ contains a cycle.
Lemma~\ref{lem:cyclicroot} gives $q_W\ge\omega+1$.
By~\eqref{eq:boundaryrec} and~\eqref{eq:boundaryproduct},
$E=E'+E_0+q_Wq_*-\omega q_0\ge q_Wq_*-\omega q_0\ge (\omega+1)q_0-\omega q_0= q_0\ge1$.
Both cases give a strict inequality, so equality does not hold.
\end{proof}

\begin{corollary}\label{cor:universalequality}
Let $(H,r)$ be a connected rooted graph in which every nonroot vertex not adjacent to $r$ has degree at least $2$ in $H$. Then
$F(H,r)=0$ if and only if $r$ is adjacent to every other vertex of $H$.
\end{corollary}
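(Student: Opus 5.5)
The plan is to reduce Corollary~\ref{cor:universalequality} to Lemma~\ref{lem:stricthalf} through the translation of Lemma~\ref{lem:rootdeletion}, just as Corollary~\ref{cor:rhononpositive} was obtained from Lemma~\ref{lem:compbounds}. Put $G=H-r$ and $B=\Gamma_H(r)$. Then $E(G,B)=F(H,r)$, and $F(H,r)=0$ if and only if $E(G,B)=0$.

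\textbf{Checking the hypotheses of Lemma~\ref{lem:stricthalf}.}
\begin{enumerate}
 \item \emph{$B$ meets every component of $G$.} Since $H$ is connected, every component of $H-r$ contains a neighbor of $r$. If $H=\{r\}$, then $G$ is the empty graph and the condition holds vacuously.
 \item \emph{Degree condition outside $B$.} Let $u\in V(G)\setminus B$. Then $u$ is a nonroot vertex not adjacent to $r$, so $d_H(u)\ge2$ by hypothesis. Because $u$ is not adjacent to $r$, deleting $r$ removes none of its edges, so $d_G(u)=d_H(u)\ge2$.
\end{enumerate}

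\textbf{Conclusion.} Lemma~\ref{lem:stricthalf} now gives $E(G,B)=0$ if and only if $B=V(G)$. The condition $B=V(G)$ says exactly that $\Gamma_H(r)=V(H)\setminus\{r\}$, that is, $r$ is adjacent to every other vertex. Combined with $E(G,B)=F(H,r)$, this is the claimed equivalence.

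\textbf{Main obstacle.} There is no real difficulty. The only step needing care is checking that degrees are unchanged for vertices outside $B$, which is the point where we use that the hypothesis concerns only nonneighbors of $r$. The degenerate case $H=\{r\}$ needs a brief check: there $B=V(G)=\varnothing$ and $F(H,r)=2\cdot1-2\cdot1=0$, which agrees with Lemma~\ref{lem:stricthalf}'s remark that the empty graph is allowed.
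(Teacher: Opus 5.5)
Your proposal is correct and matches the paper's proof: you apply Lemma~\ref{lem:stricthalf} to $(H-r,\Gamma_H(r))$, observe that nonneighbors of $r$ keep their degree in $H-r$, and translate back to $F(H,r)$ via Lemma~\ref{lem:rootdeletion}. The paper leaves implicit two checks that you state explicitly: that $\Gamma_H(r)$ meets every component of $H-r$, and the degenerate case $H=\{r\}$.
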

\begin{proof}
Apply Lemma~\ref{lem:stricthalf} to $(H-r,\Gamma_H(r))$.
Every vertex of $H-r$ outside $\Gamma_H(r)$ is not adjacent to $r$ and hence has the same degree in $H-r$ as in $H$.
The result follows from Lemma~\ref{lem:rootdeletion}.
\end{proof}

\subsection{Proof of Vince's conjecture}\label{sec:repair}

Using the preceding inequalities involving $N(G)$, $F(G)$, $E(G,B)$, and related quantities, we now prove Vince's conjecture.
Take two disjoint connected simple rooted graphs $(H_i,r_i)$ in which each root has degree at least $2$
and every nonroot vertex has degree at least $3$.
Set $n_i=\ord{H_i}$, $S_i=S(H_i,r_i)$, $w_i=S(H_i-r_i)$, $F_i=F(H_i,r_i)$, and $\rho_i=\overline N(H_i,r_i)+1-N(H_i,r_i)-F_i$.
The preceding results give
\begin{equation}\label{eq:signs}
 n_i\ge4,\qquad N(H_i,r_i)\ge2n_i,\qquad F_i\ge0,\qquad\rho_i\le0.
\end{equation}
Set $R=N(H_1,r_1)N(H_2,r_2)-n_2N(H_1,r_1)-n_1N(H_2,r_2)$.
Let $G_0$ be obtained by identifying the roots, and let $G_1$ be obtained by adding the edge $r_1r_2$.
For $\lambda\in\{0,1\}$, their orders are $\ord{G_\lambda}=n_1+n_2-1+\lambda$.

\begin{figure}[htbp]
\centering
\begin{tikzpicture}[font=\small]
  \begin{scope}
    \draw[paper region] (0,0)
      .. controls (-.7,1.3) and (-2.8,1.2) .. (-2.8,0)
      .. controls (-2.8,-1.2) and (-.7,-1.3) .. cycle;
    \draw[paper region] (0,0)
      .. controls (.7,1.3) and (2.8,1.2) .. (2.8,0)
      .. controls (2.8,-1.2) and (.7,-1.3) .. cycle;
    \node at (-1.6,0) {$H_1$};
    \node at (1.6,0) {$H_2$};
    \node[paper root,label=above:$r$] at (0,0) {};
    \node at (0,-1.35) {$G_0$: identify $r_1,r_2$};
    \node at (0,-1.9) {$\ord{G_0}=n_1+n_2-1$};
  \end{scope}
  \begin{scope}[xshift=7.1cm]
    \draw[paper region] (-.55,0)
      .. controls (-1.0,1.2) and (-2.8,1.1) .. (-2.8,0)
      .. controls (-2.8,-1.1) and (-1.0,-1.2) .. cycle;
    \draw[paper region] (.55,0)
      .. controls (1.0,1.2) and (2.8,1.1) .. (2.8,0)
      .. controls (2.8,-1.1) and (1.0,-1.2) .. cycle;
    \draw[paper edge] (-.55,0)--(.55,0);
    \node at (-1.8,0) {$H_1$};
    \node at (1.8,0) {$H_2$};
    \node[paper root,label=above:$r_1$] at (-.55,0) {};
    \node[paper root,label=above:$r_2$] at (.55,0) {};
    \node at (0,-1.35) {$G_1$: add $r_1r_2$};
    \node at (0,-1.9) {$\ord{G_1}=n_1+n_2$};
  \end{scope}
\end{tikzpicture}
\end{figure}

For each $i$, obtain $H_i^+$ from $H_i$ by adding a new vertex $r_i'$ adjacent precisely to the vertices in $\Gamma_{H_i}(r_i)\cup\{r_i\}$, as illustrated below.
Both $r_i$ and $r_i'$ have degree at least $3$, and the degrees of all other vertices do not decrease. Hence
$\delta(H_i^+)\ge3$ and $\ord{H_i^+}=n_i+1$.

\begin{figure}[htbp]
\centering
\begin{tikzpicture}[font=\small,>=Latex]
  \begin{scope}
    \draw[paper region] (0,-.3) ellipse (1.65cm and .45cm);
    \foreach \x in {-1.1,0,1.1} {
      \draw[paper edge] (0,1.3)--(\x,-.3);
      \node[paper vertex] at (\x,-.3) {};
    }
    \node at (-.55,-.3) {$\cdots$};
    \node at (.55,-.3) {$\cdots$};
    \node[paper root,label=above:$r_i$] at (0,1.3) {};
    \node at (0,-1.05) {$\Gamma_{H_i}(r_i)$};
    \node at (0,-1.65) {$H_i$};
  \end{scope}
  \draw[->,paper edge] (2.1,.4)--(4.4,.4) node[midway,above] {add $r_i'$};
  \begin{scope}[xshift=6.5cm]
    \draw[paper region] (0,-.3) ellipse (1.65cm and .45cm);
    \foreach \x in {-1.1,0,1.1} {
      \draw[paper edge] (-.65,1.3)--(\x,-.3);
      \draw[paper edge] (.65,1.3)--(\x,-.3);
      \node[paper vertex] at (\x,-.3) {};
    }
    \draw[paper edge] (-.65,1.3)--(.65,1.3);
    \node at (-.55,-.3) {$\cdots$};
    \node at (.55,-.3) {$\cdots$};
    \node[paper root,label=above:$r_i$] at (-.65,1.3) {};
    \node[paper root,label=above:$r_i'$] at (.65,1.3) {};
    \node at (0,-1.05) {$\Gamma_{H_i}(r_i)$};
    \node at (0,-1.65) {$H_i^+$};
  \end{scope}
\end{tikzpicture}
\end{figure}

\begin{proposition}\label{prop:twins}
Under the above assumptions, for $\lambda\in\{0,1\}$,
\begin{align}
 F(G_\lambda)-F(H_1^+)-F(H_2^+)={}&\ord{G_\lambda}-3+\lambda+(1+\lambda)R\notag\\
 &+F_1(N(H_2,r_2)-n_2-1)+F_2(N(H_1,r_1)-n_1-1)\notag\\
 &-(n_2-2+\lambda)\rho_1-(n_1-2+\lambda)\rho_2.
 \label{eq:twinexact}
\end{align}
In particular,
\begin{equation}\label{eq:twincomparison}
 F(G_\lambda)\ge F(H_1^+)+F(H_2^+)+\ord{G_\lambda}-3+\lambda.
\end{equation}
\end{proposition}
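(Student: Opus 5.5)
The plan is to compute every term on the left of~\eqref{eq:twinexact} exactly in terms of the rooted data of $H_1,H_2$, and then check that the algebra gives the stated identity.

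\textbf{Notation.} Write $N_i=N(H_i,r_i)$, $\overline N_i=\overline N(H_i,r_i)=N(H_i-r_i)$, $S_i=S(H_i,r_i)$ and $w_i=S(H_i-r_i)$. The variables $F_i$ and $\rho_i$ are then just abbreviations: $F_i=2S_i-(n_i+1)N_i$ and $\rho_i=\overline N_i+1-N_i-F_i$. Since $r_1$ (or $r_1r_2$) separates the two sides, every connected set of $G_\lambda$ falls into one of three classes:
\begin{itemize}
\item sets meeting both sides: these contain $r$ in $G_0$, or both $r_1,r_2$ in $G_1$, and are products of a rooted set on each side;
\item sets contained in one side and containing that side's root (this class occurs only when $\lambda=1$);
\item sets avoiding the roots, i.e.\ connected sets of $H_i-r_i$.
\end{itemize}

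\textbf{Computing $F(G_\lambda)$.} For $G_0$ the three classes give
\[
F(G_0)=2(S_1N_2+S_2N_1-N_1N_2)+2(w_1+w_2)-(n_1+n_2-1)(N_1N_2+\overline N_1+\overline N_2).
\]
For $G_1$, the product class contributes $2(S_1N_2+S_2N_1)-(n_1+n_2)N_1N_2$. Each one-sided class contributes $2(S_i+w_i)-(n_1+n_2)(N_i+\overline N_i)$.

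\textbf{Computing $F(H_i^+)$.} The connected sets avoiding $r_i'$ are exactly $\CC(H_i)$, with size sum $S_i+w_i$ and count $N_i+\overline N_i$. A set $U\cup\{r_i'\}$ is connected iff every component of $H_i[U]$ meets $\Gamma_{H_i}(r_i)\cup\{r_i\}$. I would split according to whether $r_i\in U$.
\begin{itemize}
\item If $r_i\in U$, then $U\setminus\{r_i\}\in\QQ(H_i-r_i,\Gamma_{H_i}(r_i))$.
\item If $r_i\notin U$, then $U$ itself lies in that family.
\end{itemize}
By Lemma~\ref{lem:rootdeletion}, each case contributes $N_i$ sets, with total size of the $U$'s equal to $S_i-N_i$ in the second case. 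So the sets containing $r_i'$ number $2N_i$, with total order $(S_i)+(S_i-N_i)+2N_i\cdot 1$ after adding $r_i'$ (and $r_i$ in the first case). This gives $F(H_i^+)$ as an explicit expression in $N_i,\overline N_i,S_i,w_i,n_i$.

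\textbf{Matching the identity.} Subtract and substitute $S_i=(F_i+(n_i+1)N_i)/2$. A consistency check is that the $w_i$ terms must cancel: they appear as $2w_i$ both in $F(G_\lambda)$ and in $F(H_i^+)$. The leftover $\overline N_i$ terms have coefficient $-(n_{3-i}-2+\lambda)$, and this is exactly what the $\rho_i$ terms absorb once $\overline N_i$ is rewritten as $\rho_i-1+N_i+F_i$. After that, I would collect the remaining terms into the following groups:
\begin{itemize}
\item the product term $(1+\lambda)R$;
\item the cross terms $F_i(N_{3-i}-n_{3-i}-1)$;
\item the constant $\ord{G_\lambda}-3+\lambda$.
\end{itemize}
The main obstacle is only this bookkeeping. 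I would carry it out separately for $\lambda=0$ and $\lambda=1$, checking coefficients of $N_1N_2$, $F_1N_2$, $N_i$, $F_i$ and the constant.

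\textbf{Deducing~\eqref{eq:twincomparison}.} Every extra term is nonnegative by~\eqref{eq:signs}:
\begin{itemize}
\item $R=(N_1-n_1)(N_2-n_2)-n_1n_2\ge n_1n_2-n_1n_2=0$, using $N_i\ge2n_i$ from Lemma~\ref{lem:rootcount}(i);
\item $F_i\ge0$ by Corollary~\ref{cor:roothalf}, and $N_{3-i}-n_{3-i}-1\ge n_{3-i}-1>0$;
\item $\rho_i\le0$ by Corollary~\ref{cor:rhononpositive}, with coefficient $-(n_{3-i}-2+\lambda)\le0$ because $n_{3-i}\ge4$.
\end{itemize}
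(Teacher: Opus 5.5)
Your proposal is correct and follows the paper's proof. You express $N$ and $S$ of $G_\lambda$ and of $H_i^+$ in terms of $N_i,\overline N_i,S_i,w_i$; your count of the sets through $r_i'$ reproduces $N(H_i^+)=\overline N_i+3N_i$ and $S(H_i^+)=w_i+3S_i+N_i$. You then substitute $2S_i=(n_i+1)N_i+F_i$ and $\overline N_i=N_i+F_i+\rho_i-1$, and read off the signs from \eqref{eq:signs}. The bookkeeping you defer does close as you predict, including the $\overline N_i$ coefficient $-(n_{3-i}-2+\lambda)$. Your factorization $R=(N_1-n_1)(N_2-n_2)-n_1n_2$ is only an equivalent rewriting of the paper's argument that $R\ge0$.
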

\begin{proof}
A simple calculation gives the following equalities:
\begin{equation}\label{eq:cutNW}
 \begin{aligned}
 N(G_0)&=\overline N(H_1,r_1)+\overline N(H_2,r_2)+N(H_1,r_1)N(H_2,r_2),\\
 S(G_0)&=w_1+w_2+S_1N(H_2,r_2)+N(H_1,r_1)S_2-N(H_1,r_1)N(H_2,r_2),
 \end{aligned}
\end{equation}
and
\begin{equation}\label{eq:bridgeNW}
 \begin{aligned}
 N(G_1)&=N(H_1)+N(H_2)+N(H_1,r_1)N(H_2,r_2),\\
 S(G_1)&=w_1+w_2+S_1+S_2+S_1N(H_2,r_2)+N(H_1,r_1)S_2.
 \end{aligned}
\end{equation}
For $i=1,2$, we also have
\begin{equation*}
 \begin{aligned}
 N(H_i^+)&=\overline N(H_i,r_i)+3N(H_i,r_i),\\
 S(H_i^+)&=w_i+3S_i+N(H_i,r_i).
 \end{aligned}
\end{equation*}
Using these equalities and $2S_i=(n_i+1)N(H_i,r_i)+F_i$ in the definition of $F$, we obtain
\begin{align*}
 F(G_\lambda)={}&2(w_1+w_2)+(1+\lambda)N(H_1,r_1)N(H_2,r_2)\\
 &+F_1\bigl(N(H_2,r_2)+\lambda\bigr)+F_2\bigl(N(H_1,r_1)+\lambda\bigr)\\
 &-\lambda(n_2-2+\lambda)N(H_1,r_1)-\lambda(n_1-2+\lambda)N(H_2,r_2)\\
 &-\ord{G_\lambda}\bigl(\overline N(H_1,r_1)+\overline N(H_2,r_2)\bigr),\\
 F(H_i^+)={}&2w_i+3F_i+2N(H_i,r_i)-(n_i+1)\overline N(H_i,r_i).
\end{align*}
Substituting $\overline N(H_i,r_i)=N(H_i,r_i)+F_i+\rho_i-1$
into $F(G_\lambda)-F(H_1^+)-F(H_2^+)$ gives~\eqref{eq:twinexact},
using $\ord{G_\lambda}=n_1+n_2-1+\lambda$ and the definition of $R$.

Since $N(H_i,r_i)\ge2n_i$, we have
\[
 R=\tfrac12\bigl(N(H_2,r_2)(N(H_1,r_1)-2n_1)+N(H_1,r_1)(N(H_2,r_2)-2n_2)\bigr)\ge0,
\]
with equality if and only if $N(H_i,r_i)=2n_i$ for $i=1,2$.
Also, $N(H_i,r_i)-n_i-1\ge n_i-1>0$ and $n_i-2+\lambda>0$.
Applying these bounds and~\eqref{eq:signs} to~\eqref{eq:twinexact} gives~\eqref{eq:twincomparison}.
\end{proof}

\begin{proof}[Proof of Theorem~\ref{thm:main}]
We proceed by induction on \(n\), assuming that \(F(H)\ge v(H)\) for every connected simple graph \(H\) of smaller order with minimum degree at least \(3\).
Clearly, $n\ge4$. If $n=4$, then $G\cong K_4$ and $F(G)=4=n$.
If $G$ has no cut vertex, then it is 2-connected, and Theorem~\ref{thm:biconnected} gives $F(G)\ge n$.

Suppose that $G$ has a cut vertex. If it has a cut edge, delete that edge to obtain two rooted sides $H_1,H_2$.
Each root loses only the cut edge and hence has degree at least $2$ on its side; in this case $G=G_1$.
Otherwise, choose a cut vertex $r$, separate one component of $G-r$ from the remaining components,
and add $r$ back to each side. This gives $G=G_0$.
In the latter case, each component contains at least two neighbors of $r$; otherwise, its unique edge to $r$ would be a cut edge.
Thus both decompositions satisfy the root-degree conditions above.
For $i=1,2$, every nonroot vertex of $H_i$ has the same degree in $H_i$ as in $G$.
In either case, $G=G_\lambda$ and $n=n_1+n_2-1+\lambda$ for some $\lambda\in\{0,1\}$.
The graphs $H_i^+$ are connected and simple with minimum degree at least $3$, and
$n-\ord{H_i^+}=n_j-2+\lambda\ge2$ for $j\ne i\implies v(H_i^+)<n$.
The induction hypothesis therefore applies to both.
By~\eqref{eq:twincomparison},
\[
 F(G)\ge(n_1+1)+(n_2+1)+n-3+\lambda=2n.
\]
This proves the general bound and the stronger bound for graphs with a cut vertex.

We now determine the equality cases for the two bounds:

For the first inequality, 
if $G=K_n$, then~\eqref{eq:complete} gives $F(G)=n$.
Conversely, suppose that $F(G)=n$.
The graph $G$ cannot have a cut vertex, otherwise $F(G)\ge2n>n$. Thus $G$ is 2-connected. Choose the edge $uv$ given by Lemma~\ref{lem:descent}. Then $G_{uv}$ is 2-connected, and Lemma~\ref{lem:contraction} gives
\[
 n=F(G_{uv})+\bigl(N(G,\{u,v\})-\overline N(G,\{u,v\})\bigr)+F(G-v,u)+F(G-u,v).
\]
The four summands on the right have lower bounds $n-1,1,0,0$, respectively, so equality must hold in each bound, namely $F(G_{uv})=n-1,N(G,\{u,v\})-\overline N(G,\{u,v\})=1,F(G-v,u)=F(G-u,v)=0$.
Since $\delta(G)\ge3$, the vertex-deleted graphs $G-v$ and $G-u$ have minimum degree at least $2$.
Applying Corollary~\ref{cor:universalequality} to each shows that both $u$ and $v$ are adjacent to every other vertex of $G$. Hence every vertex set containing $u,v$ is connected, and there are $2^{n-2}$ such sets.
Since $N(G,\{u,v\})-\overline N(G,\{u,v\})=1$, we have
$\overline N(G,\{u,v\})=N(G-\{u,v\})=2^{n-2}-1$.
Thus every nonempty vertex set of $G-\{u,v\}$ is connected directly implying that it's complete. Adding back the two vertices $u,v$, each adjacent to every other vertex, shows that $G$ is complete.

For the second inequality, if $G\cong G_{\mathrm b}$ or $G\cong G_{\mathrm c}$, then clearly $\delta(G)=3$ and $G$ has a cut vertex.
After a simple calculation:
\begin{table}[H]
\centering
\label{tab:cut-extremal-counts}
\setlength{\tabcolsep}{12pt}
\begin{tabular}{ccccc}
\toprule
$G$ & $\ord{G}$ & $N(G)$ & $S(G)$ & $F(G)$ \\
\midrule
$G_{\mathrm b}$ & 8 & 94 & 384 & 16 \\
$G_{\mathrm c}$ & 7 & 78 & 280 & 14 \\
\bottomrule
\end{tabular}
\end{table}
Thus $F(G)=2n$ in both cases. Conversely, suppose that $G$ has a cut vertex and $F(G)=2n$, and use the same decomposition $G=G_\lambda$ as above.
Identity~\eqref{eq:twinexact} and $F(H_i^+)\ge n_i+1$ give
$F(G)\ge2n+(1+\lambda)R$.
Hence $R=0$, so $N(H_i,r_i)=2n_i$ for $i=1,2$.
Lemma~\ref{lem:rootcount} then gives $H_1=H_2=K_4$.
Thus $G\cong G_{\mathrm c}$ when $\lambda=0$, and $G\cong G_{\mathrm b}$ when $\lambda=1$.
\end{proof}

\begin{corollary}\label{cor:extremal}
If $G$ is a 2-connected simple graph of order $n$, or a connected simple graph of order $n$ with minimum degree at least $3$, then
\begin{equation*}
 A(G)\ge A(K_n)=\frac{n2^{n-1}}{2^n-1},
\end{equation*}
with equality if and only if $G\cong K_n$.
\end{corollary}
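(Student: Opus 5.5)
The corollary follows by translating the bound $F(G)\ge n$ into a statement about $A(G)$. By the definition~\eqref{eq:F},
\[
 A(G)=\frac{S(G)}{N(G)}=\frac n2+\frac{F(G)}{2N(G)} .
\]
For $K_n$, the values in~\eqref{eq:complete} give
\[
 A(K_n)=\frac{n2^{n-1}}{2^n-1}=\frac n2+\frac{n}{2(2^n-1)} .
\]
So it suffices to show
\[
 \frac{F(G)}{N(G)}\ge\frac{n}{2^n-1},
\]
with equality only for $K_n$.

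The plan combines two inequalities. First, Theorem~\ref{thm:biconnected} (2-connected case) and Theorem~\ref{thm:main} ($\delta(G)\ge3$ case) give $F(G)\ge n>0$. Second, $N(G)\le 2^n-1$ always holds, since $\CC(G)$ consists of nonempty subsets of $V(G)$. Hence
\[
 \frac{F(G)}{N(G)}\ge\frac{n}{N(G)}\ge\frac{n}{2^n-1}.
\]
The step $F(G)/N(G)\ge n/N(G)$ uses $N(G)>0$, and the step $n/N(G)\ge n/(2^n-1)$ uses $n>0$. This proves $A(G)\ge A(K_n)$.

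For equality, both inequalities in the chain must be tight. Tightness of $N(G)\le 2^n-1$ means every nonempty vertex set is connected. In particular every pair of vertices is adjacent, so $G\cong K_n$. (The first inequality being tight also forces $F(G)=n$, which for $\delta\ge3$ gives $K_n$ by Theorem~\ref{thm:main}, but this is not needed.) Conversely, $K_n$ clearly attains equality.

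The only care point is that both hypotheses must genuinely yield $F(G)\ge n$. The 2-connected case requires $n\ge3$ by the paper's definition. The $\delta\ge3$ case forces $n\ge4$. In both cases the argument needs no further cases, so I expect no real obstacle.
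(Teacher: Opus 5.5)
Your proposal is correct and follows the paper's own proof: insert $F(G)\ge n$ (from Theorem~\ref{thm:biconnected} or Theorem~\ref{thm:main}) and $N(G)\le 2^n-1$ into $A(G)=n/2+F(G)/(2N(G))$. Your equality argument is also valid: tightness of $N(G)\le 2^n-1$ makes every pair of vertices adjacent. This explicitly supplies the ``only if'' direction, which the paper's proof leaves implicit.
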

\begin{proof}[Proof of Corollary~\ref{cor:extremal}]
By Theorem~\ref{thm:main} or Theorem~\ref{thm:biconnected}, followed by $N(G)\le2^n-1$, we obtain
\[
 A(G)=\frac n2+\frac{F(G)}{2N(G)}
       \ge\frac n2+\frac{n}{2N(G)} \ge\frac n2+\frac{n}{2(2^n-1)}
       =A(K_n).
\]
The equality holds when $G$ is complete.
\end{proof}

\subsection{Lower bounds for \texorpdfstring{$A(G)$}{A(G)} in graphs with a cut vertex}\label{sec:cutquantitative}

Using~\eqref{eq:twinexact}, we can obtain a stronger lower bound for $F(G)/N(G)$.

\begin{proof}[Proof of Theorem~\ref{thm:cutaverage}]
The proofs of \textnormal{(i)} and \textnormal{(ii)} differ only in the calculations, so we give the details for \textnormal{(i)}. Suppose that $G$ has a cut vertex but no cut edge. Splitting at a cut vertex gives $G=G_0$.
Lemma~\ref{lem:rootcount} gives $N(H_i,r_i)\ge4n_i-8$ and $n_i\ge4$.

Substituting $\overline N(H_i,r_i)=N(H_i,r_i)+F_i+\rho_i-1$ into~\eqref{eq:cutNW}, and using $\rho_i\le0$, gives
\begin{equation}\label{eq:quant-N}
 N(G_0)\le N(H_1,r_1)N(H_2,r_2)+N(H_1,r_1)+N(H_2,r_2)+F_1+F_2-2.
\end{equation}
By Theorem~\ref{thm:main}, $F(H_i^+)\ge n_i+1$.
Together with~\eqref{eq:twinexact} for $\lambda=0$, the signs in~\eqref{eq:signs}, and $N(H_i,r_i)-n_i-1\ge3$, this yields
\begin{equation*}
 F(G_0)\ge R+2(n_1+n_2)-2+3(F_1+F_2).
\end{equation*}
The coefficient of $n_i$ on the right is $2-N(H_j,r_j)<0$ for $j\ne i$.
Thus substituting $n_i\le(N(H_i,r_i)+8)/4$ gives
\begin{equation*}
 F(G_0)\ge\tfrac12N(H_1,r_1)N(H_2,r_2)-\tfrac32(N(H_1,r_1)+N(H_2,r_2))+6+3(F_1+F_2).
\end{equation*}
Combining this estimate with~\eqref{eq:quant-N}, using $F_i\ge0$, and putting $z_i=N(H_i,r_i)-8\ge0$, we obtain
\begin{equation*}
 39F(G_0)-7N(G_0)\ge\tfrac{25}{2}z_1z_2+\tfrac{69}{2}(z_1+z_2)\ge0.
\end{equation*}
Since $A(G)=n/2+F(G)/(2N(G))$, it follows that $A(G)\ge n/2+7/78$. The table above gives $F(G_{\mathrm c})/N(G_{\mathrm c})=7/39$, so equality holds for $G_{\mathrm c}$.
Conversely, if $A(G)=n/2+7/78$, then $z_1=z_2=0$, so $N(H_1,r_1)=N(H_2,r_2)=8$.
Then $4\le n_i\le(N(H_i,r_i)+8)/4=4$, and the degree condition on the nonroot vertices forces $H_i\cong K_4$ for $i=1,2$.
Hence $G\cong G_{\mathrm c}$.
\end{proof}

\section{Lower bounds for graphs with \texorpdfstring{$\delta(G)=k\ge3$}{minimum degree exactly k at least three}}\label{sec:fixeddegree}

Motivated by Vince's conjecture solved above, we now seek to sharpen the bound $A(G)\ge n/2$ for graphs with prescribed minimum degree.
Throughout this section, unless otherwise stated, $G$ is a connected simple graph of order $n$ with $\delta(G)=k\ge3$, and we write $m=n-k-1$.

\subsection{Bounds for \texorpdfstring{$N(G)$ and $F(G)$}{N(G) and F(G)}}

We compare $G$ with $H_{n,k}$ by adding edges while preserving the neighborhood of a vertex of minimum degree.
For any graph $J$ and vertex $v\in V(J)$, let $J^v$ be obtained from $J$ by adding an edge between every pair of nonadjacent vertices in $V(J)\setminus\{v\}$.
Write $\calA(J,v)=\CC(J^v)\setminus\CC(J)$ for the family of connected sets created by adding these edges.
Choose $v\in V(G)$ with $d_G(v)=k$.
The neighborhood of $v$ is unchanged, so $G^v\cong H_{n,k}$.

\begin{figure}[htbp]
\centering
\begin{tikzpicture}[font=\small]
  \draw[paper region] (0,0) ellipse (1.05cm and 1cm);
  \draw[paper region] (3.7,0) ellipse (1.05cm and 1cm);
  \coordinate (comparison-v) at (-3.6,0);
  \coordinate (kk-near-upper) at (-.58,.46);
  \coordinate (kk-near-lower) at (-.58,-.46);
  \coordinate (kk-join-upper) at (.65,.38);
  \coordinate (kk-join-lower) at (.65,-.38);
  \coordinate (km-join-upper) at (3.05,.38);
  \coordinate (km-join-lower) at (3.05,-.38);
  \draw[paper edge] (comparison-v)--(kk-near-upper);
  \draw[paper edge] (comparison-v)--(kk-near-lower);
  \draw[paper edge] (kk-join-upper)--(km-join-upper);
  \draw[paper edge] (kk-join-lower)--(km-join-lower);
  \node[paper root,label={below=4pt:$v$}] at (comparison-v) {};
  \foreach \endpoint in {kk-near-upper,kk-near-lower,kk-join-upper,kk-join-lower,km-join-upper,km-join-lower}
    \node[paper vertex] at (\endpoint) {};
  \node at (0,0) {$K_k$};
  \node at (3.7,0) {$K_m$};
  \node at (0,-1.3) {$\Gamma_{H_{n,k}}(v)$};
  \node at (3.7,-1.3) {$m=n-k-1$};
  \draw[paper brace,decoration={brace,mirror,amplitude=6pt}]
    (-1.15,-1.8)--(4.85,-1.8) node[midway,below=9pt] {$K_{n-1}$};
\end{tikzpicture}
\caption{$H_{n,k}=K_k\vee(K_1\cup K_m)$ for $m\ge0$.}
\label{fig:comparison-graph}
\end{figure}

\begin{lemma}\label{lem:candidate-count}
$N(H_{n,k})=2^n-2^m$, $S(H_{n,k})=n2^{n-1}-(m+2)2^{m-1}+1$, and $F(H_{n,k})=2+(k-1)2^m$.
In particular, $A(H_{n,k})=\mu_{n,k}>n/2$.
\end{lemma}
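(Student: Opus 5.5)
The plan is to count directly, splitting $\CC(H_{n,k})$ according to whether the distinguished vertex $v$ (the copy of $K_1$) belongs to a set. Write $K$ for the vertex set of the $K_k$ part, of size $k$, and $M$ for the vertex set of the $K_m$ part, of size $m$. Then $H_{n,k}-v=K_{n-1}$, so every nonempty subset avoiding $v$ is connected. This contributes $2^{n-1}-1$ sets with total order $(n-1)2^{n-2}$. A set $U$ containing $v$ is connected if and only if $U=\{v\}$ or $U\cap K\ne\varnothing$. The reason is that $v$ is adjacent exactly to $K$, and $K\cup M$ is a clique, so any vertex of $U$ other than $v$ is joined to $v$ through a vertex of $K$. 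The sets containing $v$ that fail to be connected are therefore exactly $\{v\}\cup X$ with $\varnothing\ne X\subseteq M$. There are $2^m-1$ of them, and their total order is $(2^m-1)+m2^{m-1}$.

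From this I get $N(H_{n,k})=(2^{n-1}-1)+2^{n-1}-(2^m-1)=2^n-2^m$. For the sizes, the sets containing $v$ have total order $2^{n-1}+(n-1)2^{n-2}$ before the failing sets are removed. Subtracting the failing sets gives
\[
S(H_{n,k})=(n-1)2^{n-2}+2^{n-1}+(n-1)2^{n-2}-(2^m-1)-m2^{m-1}=n2^{n-1}-(m+2)2^{m-1}+1 .
\]
It is cleaner to see this as the complement count: $S(K_n)-S(\text{missing sets})=n2^{n-1}-\bigl[(2^m-1)+m2^{m-1}\bigr]$, which agrees. The formula $N=2^n-1-(2^m-1)$ gives the same $N$.

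Next compute $F(H_{n,k})=2S-nN$. This equals $F(K_n)=n$ minus $\sum_{\text{missing}}(2|U|-n)$. The missing sum is $2\bigl[(2^m-1)+m2^{m-1}\bigr]-n(2^m-1)=(m+2-n)2^m+n-2$. Hence
\[
F(H_{n,k})=n-(m+2-n)2^m-n+2=2+(n-m-2)2^m=2+(k-1)2^m .
\]
Finally, $A=n/2+F/(2N)$ gives $\mu_{n,k}$, and $\mu_{n,k}>n/2$ because $F>0$ and $N>0$ (note $n>m$).

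The only delicate point is the case $m=0$, where $K_0$ is empty. There the failing family is empty, and the formulas still hold: $(m+2)2^{m-1}=1$, so $S=n2^{n-1}$ and $N=2^n-1$. I will check this degenerate case explicitly. The rest is routine bookkeeping.
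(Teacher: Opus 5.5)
Your proof is correct and takes essentially the paper's approach: the paper also gets the formulas by viewing $H_{n,k}$ as a spanning subgraph of $K_n$ and removing the missing sets $\{v\}\cup X$ with $\varnothing\ne X\subseteq M$ from $\CC(K_n)$. Your complement computation carries out exactly that comparison in detail, as does the deduction of $A(H_{n,k})>n/2$ from $F(H_{n,k})>0$, and your explicit check of the degenerate case $m=0$ covers a point the paper leaves implicit.
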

\begin{proof}
The formulas follow by viewing $H_{n,k}$ as a spanning subgraph of $K_n$ and comparing their families of connected sets.
Since $F(H_{n,k})=2+(k-1)2^m>0$, we have $A(H_{n,k})=n/2+F(H_{n,k})/(2N(H_{n,k}))>n/2$.
\end{proof}

The next lemma shows the changes in $N$ and $F$ in terms of $\calA(G,v)$.

\begin{lemma}\label{lem:completion}
$N(H_{n,k})-N(G)=|\calA(G,v)|$ and $F(G)-F(H_{n,k})=\sum_{U\in\calA(G,v)}(n-2|U|)$.
Moreover, $\calA(G,v)=\varnothing$ if and only if $G=G^v$, or equivalently, $G\cong H_{n,k}$.
\end{lemma}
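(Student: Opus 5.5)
The plan is to use that $G$ is a spanning subgraph of $G^v$ on the same vertex set, so that connectivity is monotone under adding edges. First I would check that $G^v\cong H_{n,k}$ and that $\CC(G)\subseteq\CC(G^v)$. The neighborhood of $v$ in $G^v$ is exactly $\Gamma_G(v)$, of size $k$, since only edges between vertices of $V(G)\setminus\{v\}$ are added. Those $n-1$ vertices induce $K_{n-1}$ in $G^v$. Splitting them into $\Gamma_G(v)$ (a $K_k$) and the remaining $m=n-k-1$ vertices (a $K_m$), every vertex of the first part is joined to $v$ and to all of $K_m$, while $v$ has no neighbor in $K_m$. This is precisely $K_k\vee(K_1\cup K_m)$. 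Since every edge of $G$ is an edge of $G^v$, each set inducing a connected subgraph of $G$ also induces one in $G^v$. Hence $\CC(G^v)$ is the disjoint union of $\CC(G)$ and $\calA(G,v)$.

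Both identities then follow by counting over this partition. Taking cardinalities gives $N(H_{n,k})=N(G^v)=N(G)+|\calA(G,v)|$. Since $v(G)=v(G^v)=n$, the definition \eqref{eq:F} gives
\[
F(H_{n,k})=\sum_{U\in\CC(G^v)}(2|U|-n)=F(G)+\sum_{U\in\calA(G,v)}(2|U|-n).
\]
Rearranging yields the second formula.

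For the equivalence, if $G=G^v$ then $\calA(G,v)=\varnothing$ trivially. Conversely, suppose $G\ne G^v$. Then some pair $x,y\in V(G)\setminus\{v\}$ is nonadjacent in $G$. The set $\{x,y\}$ is disconnected in $G$ but connected in $G^v$, so it lies in $\calA(G,v)$, which is therefore nonempty.

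For the final clause I would show that $G=G^v$ if and only if $G\cong H_{n,k}$. One direction is immediate from $G^v\cong H_{n,k}$. For the other, if $G\cong H_{n,k}$ then $N(G)=N(H_{n,k})$, so the first identity forces $|\calA(G,v)|=0$, and the previous step gives $G=G^v$. An alternative argument is that $G$ and $G^v$ would then have the same number of edges, while $G$ is a spanning subgraph of $G^v$. The only place that needs any care is this last identification, since isomorphism alone does not say which copy of $H_{n,k}$ sits on $V(G)$; using counts or edge numbers avoids the issue entirely. Everything else is routine bookkeeping.
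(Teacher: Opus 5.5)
Your proof is correct and follows the paper's argument. The inclusion $\CC(G)\subseteq\CC(G^v)$, together with $G^v\cong H_{n,k}$, gives both identities by summing over $\calA(G,v)$, and the two endpoints of any added edge form a member of $\calA(G,v)$. Your extra step deducing $G=G^v$ from $G\cong H_{n,k}$, via $N(G)=N(H_{n,k})$ or edge counts, supplies a detail the paper leaves implicit.
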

\begin{proof}
Since $\CC(G)\subseteq\CC(G^v)$, we have
$N(H_{n,k})-N(G)=|\calA(G,v)|$ and $S(H_{n,k})-S(G)=\sum_{U\in\calA(G,v)}|U|$.
Substitution into the definition of $F$ gives the second identity.
If an edge is added, its two endpoints form a set in $\calA(G,v)$; hence $\calA(G,v)=\varnothing$ if and only if $G=G^v$.
\end{proof}

\begin{corollary}\label{cor:completion-comparison}
If $F(G)\ge F(H_{n,k})$, then $A(G)\ge\mu_{n,k}$, with equality if and only if $G\cong H_{n,k}$.
\end{corollary}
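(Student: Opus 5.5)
The plan is to compare $A(G)$ and $\mu_{n,k}$ through the identity $A=v/2+F/(2N)$, using Lemma~\ref{lem:completion} to relate both $N$ and $F$ of $G$ to those of $H_{n,k}$. Since $G$ and $H_{n,k}$ have the same order $n$, we have
\[
A(G)-\mu_{n,k}=\frac{F(G)}{2N(G)}-\frac{F(H_{n,k})}{2N(H_{n,k})}.
\]
It therefore suffices to show that $F(G)/N(G)\ge F(H_{n,k})/N(H_{n,k})$, and to identify when equality holds.

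First we bound the denominators. By Lemma~\ref{lem:completion}, $N(H_{n,k})-N(G)=|\calA(G,v)|\ge0$, so $N(G)\le N(H_{n,k})$. Next we check that the numerators are positive. Lemma~\ref{lem:candidate-count} gives $F(H_{n,k})=2+(k-1)2^m>0$, and the hypothesis $F(G)\ge F(H_{n,k})$ then gives $F(G)>0$.

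Combining these,
\[
\frac{F(G)}{N(G)}\ge\frac{F(H_{n,k})}{N(G)}\ge\frac{F(H_{n,k})}{N(H_{n,k})},
\]
where the first step uses the hypothesis and the second uses $0<N(G)\le N(H_{n,k})$ together with $F(H_{n,k})>0$. This proves $A(G)\ge\mu_{n,k}$.

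For the equality case, suppose $A(G)=\mu_{n,k}$. Then both displayed inequalities are equalities. Equality in the second, with $F(H_{n,k})>0$, forces $N(G)=N(H_{n,k})$. Hence $|\calA(G,v)|=0$, and Lemma~\ref{lem:completion} gives $G\cong H_{n,k}$. Conversely, if $G\cong H_{n,k}$, then Lemma~\ref{lem:candidate-count} gives $A(G)=\mu_{n,k}$ directly.

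There is no real obstacle in this argument. The only point that needs care is the strict positivity of $F(H_{n,k})$: it is what allows the comparison of denominators to go in the right direction, and what makes the equality characterization work, since without it the second inequality could hold with equality even when $N(G)<N(H_{n,k})$.
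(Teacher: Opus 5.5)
Your proof is correct and follows the same route as the paper. Both write $A=n/2+F/(2N)$ and compare the ratios using $0<N(G)\le N(H_{n,k})$ from Lemma~\ref{lem:completion} and $F(H_{n,k})>0$ from Lemma~\ref{lem:candidate-count}. Your treatment of the equality case is more explicit than the paper's bare assertion: you show that equality forces $N(G)=N(H_{n,k})$, hence $\calA(G,v)=\varnothing$ and $G\cong H_{n,k}$.
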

\begin{proof}
By Lemmas~\ref{lem:candidate-count} and~\ref{lem:completion},
\[
 A(G)=\frac n2+\frac{F(G)}{2N(G)}
 \ge\frac n2+\frac{F(H_{n,k})}{2N(H_{n,k})}=\mu_{n,k}.
\]
The equality holds precisely when $G\cong H_{n,k}$.
\end{proof}

By Lemma~\ref{lem:completion}, the bound $F(G)\ge F(H_{n,k})$ is equivalent to
$2\sum_{U\in\calA(G,v)}|U|\le n|\calA(G,v)|$.
To prove this for joins, we first give a bound for a family of subsets of a finite set.

\begin{lemma}\label{lem:deletion-family}
Let $X$ be an $h$-element set and let $\mathcal F\subseteq2^X$.
If every $U\in\mathcal F$ has at least $|U|-2$ elements $u$ such that $U\setminus\{u\}\in\mathcal F$, then
\begin{equation}\label{eq:deletion-family}
 2\sum_{U\in\mathcal F}|U|\le(h+2)|\mathcal F|.
\end{equation}
\end{lemma}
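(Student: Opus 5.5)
The plan is to double count the ``deletion pairs'' inside $\mathcal F$. Let
\[
 P=\{(U,u): U\in\mathcal F,\ u\in U,\ U\setminus\{u\}\in\mathcal F\}.
\]
I will bound $|P|$ from below by grouping pairs according to the larger set $U$. I will bound it from above by grouping them according to the smaller set $W=U\setminus\{u\}$. Comparing the two bounds should give~\eqref{eq:deletion-family} directly, with no induction needed.

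\textbf{Lower bound.} By hypothesis, each $U\in\mathcal F$ contributes at least $|U|-2$ pairs to $P$. It also contributes at least $0$ pairs, so it contributes at least $\max(0,|U|-2)\ge |U|-2$. Summing over $\mathcal F$ gives
\[
 |P|\ge\sum_{U\in\mathcal F}(|U|-2)=\sum_{U\in\mathcal F}|U|-2|\mathcal F|.
\]
The only point to watch is that sets with $|U|\le1$ make a negative term on the right. This is harmless, because the true contribution of such a set is nonnegative.

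\textbf{Upper bound.} The map $(U,u)\mapsto(W,u)$ with $W=U\setminus\{u\}$ is injective, since $U=W\cup\{u\}$. Its image consists of pairs with $W\in\mathcal F$ and $u\in X\setminus W$. Hence each $W\in\mathcal F$ arises from at most $h-|W|$ pairs, and
\[
 |P|\le\sum_{W\in\mathcal F}(h-|W|)=h|\mathcal F|-\sum_{W\in\mathcal F}|W|.
\]

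\textbf{Conclusion.} Combining the two bounds gives $\sum|U|-2|\mathcal F|\le h|\mathcal F|-\sum|U|$. This rearranges to $2\sum_{U\in\mathcal F}|U|\le(h+2)|\mathcal F|$, which is~\eqref{eq:deletion-family}.

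I do not expect a real obstacle here. The only care needed is the sign issue for small sets in the lower bound, and checking that the upper count uses only membership of $W$ in $\mathcal F$, not of $U$.
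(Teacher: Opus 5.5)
Your proposal is correct and is essentially the paper's proof: both double count the pairs $(U\setminus\{u\},U)$ of members of $\mathcal F$ that differ by one element. Each $U$ gives at least $|U|-2$ such pairs and each smaller set $W$ gives at most $h-|W|$, and comparing the two sums yields \eqref{eq:deletion-family}. Your explicit remark that sets with $|U|\le1$ contribute $\max(0,|U|-2)\ge|U|-2$ just makes precise a point the paper leaves implicit.
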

\begin{proof}
Count pairs $(U',U)\in\mathcal F\times\mathcal F$ with $U'\subset U$ and $|U\setminus U'|=1$.
For each $U$, there are at least $|U|-2$ choices of $U'$; for each $U'$, there are at most $h-|U'|$ choices of $U$.
Thus
\[
 \sum_{U\in\mathcal F}(|U|-2)\le\sum_{U\in\mathcal F}(h-|U|),
\]
which gives~\eqref{eq:deletion-family}, including when $\mathcal F=\varnothing$.
\end{proof}

For a graph $J$, define $\DD(J)=\{\varnothing\ne U\subseteq V(J):J[U]\text{ is disconnected}\}.$
We use $\DD(J)$ and $\calA(J,v)$ for the two sides of a join.

\begin{lemma}\label{lem:two-families}
For any graph $J$ of order $h$ and any $v\in V(J)$, both $\DD(J)$ and $\calA(J,v)$ satisfy~\eqref{eq:deletion-family}.
\end{lemma}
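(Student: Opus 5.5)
The plan is to verify, for each of the two families, the hypothesis of Lemma~\ref{lem:deletion-family} with $X=V(J)$ and $h=\ord{J}$. That is, for every member $U$ we show that at most two elements $u\in U$ fail to satisfy $U\setminus\{u\}$ lying in the family. Then \eqref{eq:deletion-family} follows directly from that lemma.

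\textbf{The family $\DD(J)$.} Take $U\in\DD(J)$, so $J[U]$ has at least two components and $|U|\ge2$.
\begin{enumerate}
 \item If $J[U]$ has at least three components, deleting any vertex leaves at least two nonempty components. Hence every $u\in U$ is good.
 \item If $J[U]$ has exactly two components $C_1,C_2$, deleting $u\in C_i$ with $|C_i|\ge2$ leaves $C_i-u$ nonempty. It may split into several pieces, but $C_j$ survives, so $U\setminus\{u\}$ is still disconnected and nonempty.
\end{enumerate}
So the only bad choices are the vertices forming singleton components, and there are at most two of them. Hence at least $|U|-2$ deletions stay in $\DD(J)$.

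\textbf{The family $\calA(J,v)$.} First we describe it explicitly. In $J^v$ the set $V(J)\setminus\{v\}$ is a clique and $v$ keeps its neighbourhood $\Gamma_J(v)$. So a nonempty $U$ is connected in $J^v$ if and only if $v\notin U$, or $U=\{v\}$, or $U\cap\Gamma_J(v)\ne\varnothing$. Since $\{v\}$ is connected in $J$, we get
\[
\calA(J,v)=\{U\in\DD(J):\ v\notin U\ \text{or}\ U\cap\Gamma_J(v)\ne\varnothing\}.
\]
Now take $U\in\calA(J,v)$. A deletion $u$ can fail only by breaking disconnectedness or by breaking the $v$-condition. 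By the analysis for $\DD(J)$, disconnectedness fails only for singleton components, and only when $J[U]$ has exactly two components. The $v$-condition can fail only when $v\in U$ and $u$ is the unique vertex $w$ of $U\cap\Gamma_J(v)$. Deleting $v$ itself is harmless: the $v$-condition becomes vacuous, and the component containing $v$ also contains $w$, so it survives.

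\textbf{Counting the bad choices.} The key observation is that when $v\in U$, the component of $J[U]$ containing $v$ also contains a neighbour of $v$, so it has at least two vertices. We split into cases.
\begin{enumerate}
 \item If $v\notin U$, only singleton components can be bad, so at most two vertices are bad.
 \item If $v\in U$ and $J[U]$ has at least three components, only $w$ can be bad.
 \item If $v\in U$ and $J[U]$ has exactly two components, at most one of them is a singleton, since the other contains $v$ and $w$. Together with $w$ this gives at most two bad vertices.
\end{enumerate}
Thus at least $|U|-2$ deletions stay in $\calA(J,v)$. The only delicate step is this case check for $\calA(J,v)$, specifically noting that the $v$-component is never a singleton, which is what keeps the count of bad vertices at two.
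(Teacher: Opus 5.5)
Your proof is correct and follows essentially the same route as the paper. In both, the hypothesis of Lemma~\ref{lem:deletion-family} is verified by showing that a deletion can fail in only two ways: it can produce a set connected in $J$, which needs a singleton component when $J[U]$ has exactly two components, or it can disconnect $v$ in $J^v$, which needs the deleted vertex to be the unique neighbour of $v$ in $U$; together these give at most two bad vertices. The differences are cosmetic: the paper argues by contradiction from three bad vertices and uses $|U|\ge3$ to limit singleton components to one, whereas you use your explicit description of $\calA(J,v)$ and the fact that the component containing $v$ is never a singleton.
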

\begin{proof}
For $U\in\DD(J)$, deleting a vertex can make $J[U]$ connected only if $J[U]$ has exactly two components and the deleted vertex forms one of them.
There are at most two such vertices, so Lemma~\ref{lem:deletion-family} applies to $\DD(J)$.

Now suppose that some $U\in\calA(J,v)$ with $|U|\ge3$ has three distinct vertices $u_1,u_2,u_3$ such that $U\setminus\{u_i\}\notin\calA(J,v)$ for $i=1,2,3$.
Since $J[U]$ is disconnected, $J[U\setminus\{u_i\}]$ can be connected only if $J[U]$ has exactly two components, one of which is $\{u_i\}$.
As $|U|\ge3$, this occurs for at most one $i$.
Thus $J^v[U\setminus\{u_i\}]$ must be disconnected for at least two indices $i$.
Since $J^v[U]$ is connected and $J^v-v$ is complete, this forces $v\in U$ and $U\cap\Gamma_J(v)=\{u_i\}$ for two distinct indices, a contradiction.
Therefore at least $|U|-2$ deletions preserve membership in $\calA(J,v)$; for $|U|\le2$, this is automatic.
Lemma~\ref{lem:deletion-family} applies.
\end{proof}

\subsection{Proof of Theorem~\ref{thm:fixeddegree}}

\begin{proof}[Proof of Theorem~\ref{thm:fixeddegree}]
\textbf{(i)} Suppose that $G$ has a cut vertex $r$.
Each component of $G-r$ has at least $k$ vertices, since every neighbor of a vertex in that component lies in the component or is $r$.
Hence $n\ge2k+1$ and $m\ge k\ge3$.
By the definition of $\mu_{n,k}$ and Theorem~\ref{thm:cutaverage},
\[
 \mu_{n,k}-\frac n2=\frac{k-1}{2(2^{k+1}-1)}+\frac1{(2^{k+1}-1)2^m}
 \le \frac{3-1}{2(2^{3+1}-1)}+\frac1{(2^{3+1}-1)2^3}<\frac4{47}\le A(G)-\frac n2.
\]

\smallskip\noindent\textbf{(ii)} If $G\cong H_{n,k}$, then Lemma~\ref{lem:candidate-count} gives $A(G)=\mu_{n,k}$.
By definition, $H_{n,k}$ satisfies condition \textnormal{(ii)}. Now suppose that $G=G[A]\vee G[B]$, where $a=|A|\ge2$ and $b=|B|\ge2$.
Choose $v$ of degree $k$ and, without loss of generality, assume that $v\in A$.
Let $\calA_1=\calA(G[A],v)$, $\calA_2=\DD(G[B])$, and $m_i=|\calA_i|$.
Every vertex set meeting both $A$ and $B$ is already connected in $G$, so $\calA(G,v)$ is disjoint union of $\calA_1$ and $\calA_2.$
By Lemma~\ref{lem:two-families},
\[
 2\sum_{U\in\calA_1}|U|\le(a+2)m_1,\qquad
 2\sum_{U\in\calA_2}|U|\le(b+2)m_2.
\]
Using these bounds in the second identity of Lemma~\ref{lem:completion}, with $n=a+b$, gives
\[
 \begin{aligned}
 F(G)-F(H_{n,k})
 &=n(m_1+m_2)-2\sum_{U\in\calA_1}|U|-2\sum_{U\in\calA_2}|U|\\
 &\ge(b-2)m_1+(a-2)m_2\ge0.
 \end{aligned}
\]
Corollary~\ref{cor:completion-comparison} gives the required bound for $A(G)$ and its equality condition.

\smallskip\noindent\textbf{(iii)} If $G$ is complete, then $A(G)=A(K_{k+1})=A(H_{k+1,k})=\mu_{k+1,k}.$ Suppose that $3n\le4(k+1)$ and $G$ is not complete, and choose $v$ of degree $k$.
For any nonempty set $U$ such that $G[U]$ is disconnected, let $C$ be the vertex set of a component of $G[U]$ of minimum order.
Then $|C|\le|U|/2$, and for $u\in C$,
\[
 k\le d_G(u)\le |C|-1+n-|U|\le n-1-\frac{|U|}{2},
\]
Thus $|U|\le2(n-k-1)\le n/2$.
In particular, every $U\in\calA(G,v)$ satisfies this bound, so Lemma~\ref{lem:completion} gives $F(G)\ge F(H_{n,k})$.
Corollary~\ref{cor:completion-comparison} again gives the bound for $A(G)$ and its equality condition.
\end{proof}

\section{Conjecture and open questions}\label{sec:questions}

The complete graph minimizes the average order of connected induced subgraphs among connected simple graphs of fixed order and minimum degree at least three.
Prescribing a smaller minimum degree excludes this graph.
Theorem~\ref{thm:fixeddegree} suggests the following conjecture for this more restricted problem.

\begin{conjecture}
Let $n\ge k+1$ and $k\ge3$. Every connected simple graph $G$ of order $n$ with $\delta(G)=k$ satisfies
\[
 A(G)\ge A(H_{n,k})=\mu_{n,k},
\]
with equality if and only if $G\cong H_{n,k}$.
\end{conjecture}

Theorem~\ref{thm:fixeddegree} proves this conjecture for graphs with a cut vertex, for joins whose two parts each have at least two vertices, and for graphs satisfying $3n\le4(k+1)$.
It remains to consider 2-connected graphs satisfying $3n>4(k+1)$ that admit no such join decomposition. The proof for joins uses the comparison $F(G)\ge F(H_{n,k})$.
This inequality does not hold for all connected graphs: $G_{\mathrm c}$ has $n=7$, $k=3$, and $F(G_{\mathrm c})=14<F(H_{7,3})=18$.
Thus this comparison alone cannot prove the conjecture in full.

For graphs with a cut vertex, Theorem~\ref{thm:cutaverage} gives gaps above $n/2$ that are independent of the order, with equality only at orders seven and eight.
The minimum at other fixed orders remains to be determined.
For each $n\ge7$, what is the minimum of $A(G)$ among connected simple graphs of order $n$ with $\delta(G)\ge3$ that have a cut vertex but no cut edge, and which graphs attain it?
The corresponding question for graphs with a cut edge can be asked for each $n\ge8$.
One may also prescribe the minimum degree $\delta(G)=k$ in both problems.
The decomposition formulas in Subsection~\ref{sec:repair} keep track of the orders of the two rooted parts and may provide a starting point.

\subsection*{Data availability}
No external research data were used in this theoretical study.
Computer checks of finite graphs and algebraic identities were carried out during proof development.

\subsection*{Acknowledgements}
This research received no specific grant from any funding agency.

\subsection*{Use of artificial intelligence}
The authors developed the main tools of this paper by extending Haslegrave's work.
The Danus multi-agent workflow with GPT Astra was used to assist in developing some of the proofs.
GPT Sol 6.1 was used to revise and polish the language of the manuscript and to assist in preparing the TikZ figures.
The authors are responsible for the mathematical arguments, references, and final text.

\end{document}